\def\tra#1{\smash{\mathop{\mid\kern
-1pt\joinrel\relbar\joinrel\relbar}\limits^{*}_{#1}}}
\def\longtra#1{\smash{\mathop{\mid\kern
-1pt\joinrel\relbar\joinrel\relbar\joinrel\relbar}\limits^{*}_{#1}}}
\def\vlongtra#1{\smash{\mathop{\mid\kern
-1pt\joinrel\relbar\joinrel\relbar\joinrel\relbar\joinrel\relbar}\limits^{*}_{#1}}}
\def\vvlongtra#1{\smash{\mathop{\mid\kern
-1pt\joinrel\relbar\joinrel\relbar\joinrel\relbar\joinrel\relbar\joinrel\relbar}\limits^{*}_{#1}}}
\def\vvvlongtra#1{\smash{\mathop{\mid\kern
-1pt\joinrel\relbar\joinrel\relbar\joinrel\relbar\joinrel\relbar\joinrel\relbar\joinrel\relbar}\limits^{*}_{#1}}}
\def\etra#1{\smash{\mathop{\mid\kern
-1pt\joinrel\relbar\joinrel\relbar}\limits_{#1}}}
\def\A{{\cal{A}}}
\def\iff{\Leftrightarrow}
\def\Rw{\Rightarrow}
\def\oo{\overline}
\def\wt{\widetilde}
\def\wh{\widehat}
\def\C{{\cal{C}}}
\def\L{{\cal{L}}}
\def\M{{\cal{M}}}
\def\N{\mathbb{N}}
\def\S{{\cal{S}}}
\def\aut{\mbox{Aut}}
\def\inn{\mbox{Inn}}
\def\stab{\mbox{Stab}}
\def\lam{\wh{\lambda}}
\def\fix{\mbox{Fix}}
\def\ker{\mbox{Ker}\,}
\def\fin{\mbox{Fin}}
\def\max{\mbox{max}}
\def\min{\mbox{min}}
\def\rank{\mbox{rank}}
\def\supp{\mbox{supp}}
\def\R{{\cal{R}}}
\def\G{{\cal{G}}}
\def\Z{\mathbb{Z}}
\def\p{\varphi}
\def\inv{^{-1}}
\def\bi{\begin{itemize}}
\def\ei{\end{itemize}}
\def\beq{\begin{equation}}
\def\eeq{\end{equation}}
\def\ba{\begin{array}}
\def\ea{\end{array}}
\newtheorem{theorem}{Theorem}[section]
\newtheorem{proposition}[theorem]{Proposition}
\newtheorem{lemma}[theorem]{Lemma}
\newtheorem{corollary}[theorem]{Corollary}
\newtheorem{questions}[theorem]{Questions}
\newtheorem{remark}[theorem]{Remark}
\newtheorem{example}[theorem]{Example}
\def\abstract#1{\par\bigskip
\begingroup\small
\baselineskip=12truept
\begin{center}ABSTRACT\end{center}
\par\medskip\par\noindent
\null\hfill\hbox{\vbox{\hsize=5truein\noindent#1}}
\hfill\null\par\endgroup\par}
\title{Extensions of automorphisms of self-similar groups}
\author{{\bf Francesco Matucci and Pedro V. Silva}}
\date{\today}
\begin{document}
\maketitle

%\begin{center}\small
%2010 Mathematics Subject Classification: 20F65, 20E36, 20F38

%\bigskip

%Keywords: 
%\end{center}

\abstract{In this work we study automorphisms of synchronous self-similar groups, the existence of extensions to automorphisms of the full group of automorphisms of the infinite rooted tree on which these groups act on. When they do exist, we obtain conditions for the continuity of such extensions with respect to the depth metric, but we also construct examples of groups where such extensions do not exist. We study the case of the lamplighter group $\L_k = \mathbb{Z}_k \wr \mathbb{Z}$.
%and show that all of its automorphisms
%admit a continuous extension and determine necessary and sufficient conditions for when the
%automorphism group of the lamplighter group is finitely generated.
}

\section{Introduction}

Let $A$ be a finite alphabet with at least two symbols, and let $A^\omega$ be the corresponding Cantor space of all infinite strings.  A homeomorphism of~$A^\omega$ is said to be \emph{rational} if there exists a transducer (i.e.~Mealy machine) that implements the mapping on infinite strings. 
%In~\cite{GNS}, Grigorchuk, Nekrashevych and Sushchanski\u{\i} observed that the set of all rational homeomorphisms of~$A^\omega$ forms a group under composition, which they refer to as the \emph{rational group}.
There are two different versions of the rational group, depending on the type of transducers we allow.  A transducer that outputs exactly one symbol for each symbol that it reads is said to be \emph{synchronous}, and the group of all homeomorphisms of $A^\omega$ determined by synchronous transducers is the \emph{synchronous rational group}~$\mathcal{S}_A$.  If we instead allow transducers to output any finite number of symbols at each step, we obtain the \emph{asynchronous rational group}~$\R_A$, which includes $\S_A$ as a subgroup.

Subgroups of the synchronous groups $\S_A$ have received much attention in the literature~\cite{Bartholdi-Grigorchuk-Nekrashevych-1,Bartholdi-Grigorchuk-Sunik-1,Nek}.  In addition to branch groups and self-similar groups such as the Grigorchuk group, several other well-known groups can be represented by synchronous automata, including free groups~\cite{Vorobets-Vorobets}, $\mathrm{GL}_n(\mathbb{Z})$ and its subgroups~\cite{Brunner-Sidki-1}, the solvable Baumslag-Solitar groups $BS(1, m)$~\cite{Bartholdi-Sunik-1}, and the lamplighter groups~$R \wr \mathbb{Z}$ with $R$ a finite ring~\cite{Silva-Steinberg-1}. In the case of the asynchronous rational
group, it has recently been proved that all hyperbolic groups embed into the group ~$\R_A$.

Automorphisms of self-similar groups have been studied in detail before
by Lavreniuk and Nekrashevych \cite{Lavreniuk-Nekrashevych-1} and  Bartholdi and Sidki \cite{Bartholdi-Sidki-1} giving sufficient conditions for automorphisms to be induced by conjugation in $\aut(T_A)$. In the present paper, we focus on the study of some self-similar groups and their automorphisms
and we investigate their continuity with respect to the \emph{depth metric} and whether they can be extended to automorphisms of the full group of tree automorphisms of the infinite binary rooted tree. More precisely, since $\mathcal{S}_A \le \aut(T_A)$,
we define the depth metric on $\aut(T_A)$ so that, for $\p,\psi \in \aut(T_A)$, we let
$$d(\p,\psi) = \left\{
\ba{ll}
2^{-\min\{ n \in \N \; {\big{\lvert}}\; \p|_{A^n} \neq \psi|_{A^n}
\}}&\mbox{ if }\p \neq \psi\\
0&\mbox{ if }\p = \psi
\ea
\right.$$
where $A^n$ denote the $n$-th level of the tree $T_A$. Given a self-similar group $G \le \aut(T_A)$ we will be concerned with
its closure and we will obtain some general results about $\oo{G}$ and about the continuity of extensions of automorphisms
of $G$, and show examples with continuous extensions. However, examples
with non-continuous extensions exist.
%%%%%%%%%%%%%%%%%%
%%%%%%%%%%%%%%%%%%
\iffalse
%%%%%%%%%%%%%%%%%%
%%%%%%%%%%%%%%%%%%
For the closure of the group $G$ we obtain the following result:

\textcolor{red}{
\begin{theorem}
%\label{card}
Let $|A| \ge 2$ and $G \leq {\rm Aut}(T_A)$ be self-similar.
\bi
\item[(i)] If $G$ is finite, then $G = \oo{G}$
\item[(ii)] $G$ is infinite, then $\oo{G}$ is uncountable.
\item[(iii)] If $G$ is infinite and finitely generated, then $\oo{G} < {\rm Aut}(T_A)$.
\ei
\end{theorem}
}

We then turn to investigate the continuity of extensions of automorphisms and construct
an example of a non-continuous extension.
%%%%%%%%%%%%%%%%%%
%%%%%%%%%%%%%%%%%%
\fi
%%%%%%%%%%%%%%%%%%
%%%%%%%%%%%%%%%%%%

\begin{theorem}
%\label{ucont}
%Let $G \leq {\rm Aut}(T_A)$ and let $\theta:G \to G$ be an automorphism.
%Then the following conditions are equivalent:
%\bi
%\item[(i)] $\theta$ admits a continuous extension to $(\oo{G},d)$.
%\item[(ii)] $\forall m \in \N\; \exists n \in \N: ({\rm
%  Stab}_n(G))\theta \subseteq {\rm Stab}_m(G)$.
%\ei
There exists an example of a self-similar group $G\leq {\rm Aut}(T_A)$ and an automorphism $\theta:G \to G$ not admitting a continuous extension to $\oo{G}$.
\end{theorem}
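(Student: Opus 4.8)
The plan is to exhibit the example explicitly; a natural candidate is the lamplighter group $G = \L_2 = \mathbb{Z}_2 \wr \mathbb{Z}$ over $A = \{0,1\}$. Recall that $\L_2$ is self-similar, generated by a two-state automaton (the Grigorchuk--\.Zuk automaton, or the general Silva--Steinberg construction): write $t$ for the generator of the acting copy of $\mathbb{Z}$, $y$ for a generator of one of the $\mathbb{Z}_2$-lamps, $y_i = t^i y t^{-i}$, and $B = \bigoplus_{i\in\mathbb{Z}}\langle y_i\rangle$ for the base group. For $\theta$ I would take the automorphism of $\L_2$ determined by $\theta(t) = t^{-1}$ and $\theta(y) = y$, so that $\theta(y_i) = y_{-i}$; this is a well-defined automorphism precisely because the defining relations of the wreath product, namely $y^2 = 1$ and $[\,y, y_i\,] = 1$ for all $i$, are invariant under reversing the direction of $\mathbb{Z}$. (One may note that $B$, being the set of torsion elements of $\L_2$, is characteristic, so any continuous extension of $\theta$ would have to carry $\oo B$ onto $\oo B$; this is not logically needed below, but it explains why $\theta$ interacts only with the base.)

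Next I would reduce the theorem to a single limiting phenomenon. It suffices to produce a sequence $(h_n)_{n\ge1}$ in $\L_2$ with $d(h_n,1)\to 0$ but $d(\theta(h_n),1)\ge\varepsilon$ for all $n$ and some fixed $\varepsilon>0$. Indeed, if $\widehat\theta\colon\oo G\to\oo G$ were a continuous map with $\widehat\theta|_G = \theta$, then from $h_n\to 1$ in $\oo G$ we would obtain $\theta(h_n) = \widehat\theta(h_n)\to\widehat\theta(1) = \theta(1) = 1$, contradicting $d(\theta(h_n),1)\ge\varepsilon$. Since $\oo G$ is compact (a closed subgroup of the profinite group $\aut(T_A)$) and the depth metric is bi-invariant, a natural shape for $h_n$ is a suitably chosen product of lamps $y_i$ with indices $i$ going to $+\infty$, whose image under $\theta$ is the mirror product with indices going to $-\infty$; everything then reduces to comparing the depth behaviour of these two families.

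The hard part will be establishing this asymmetry, which is the genuine content of the statement: in the pro-$2$ group $\oo B$ the two ends of $\mathbb{Z}$ must be topologically inequivalent, so that one-sided products of lamps built from indices tending to $+\infty$ can be arranged to act trivially on arbitrarily many levels of $T_A$ while their mirror images keep acting non-trivially at a fixed shallow level. This is exactly where the asymmetry (non-bireversibility) of the generating automaton enters — for instance $t = (b,a)\sigma$ but $t^{-1} = (a^{-1},b^{-1})\sigma$, with the non-trivial sections sitting under the opposite letters — so that conjugating a lamp by $t$ and by $t^{-1}$ propagates its support through the self-similar recursion in different ways. Concretely I would compute, via the self-similar recursion, the sections of the relevant partial products of lamps, identify the level at which such a product first acts non-trivially, and check that on one side this level tends to infinity (forcing $h_n\to 1$) while on the mirror side it stays bounded; equivalently, one can try to identify $\oo B$ with an explicit power-series-type pro-$2$ module on which the index reversal induced by $\theta$ is visibly discontinuous. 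Once such a sequence $(h_n)$ is in hand, the conclusion follows immediately from the reduction above.
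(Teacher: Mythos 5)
Your reduction is fine (producing $h_n\to 1$ with $d(\theta(h_n),1)\ge\varepsilon$ would indeed kill any continuous extension, by Lemma \ref{ucont}), but the example you chose cannot work, and the step you defer as ``the hard part'' is precisely the step that fails. The automorphism you propose is exactly $\zeta$ in the paper's notation ($\alpha\zeta=\alpha$, $\xi\zeta=\xi^{-1}$), and Theorem \ref{luc} proves that \emph{every} automorphism of $\L_k$ --- $\zeta$ explicitly included --- is uniformly continuous for the depth metric; this is one of the main results of the paper (Theorem \ref{thm:all-lamps-are-uniform}). The asymmetry between the two ends of $\Z$ that you hope to extract from the non-bireversibility of the generating automaton simply is not there. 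Concretely, in the power-series model $\beta_m$ acts on $X\in\Z_2[[t]]$ by $X\mapsto X+(1-t)^m$, so $h_j=\beta_{2^j}\beta_0$ acts by $X\mapsto X+t^{2^j}$ and lies in $\stab_{2^j-1}(\L_2)$, while its mirror $\zeta(h_j)=\beta_{-2^j}\beta_0$ acts by $X\mapsto X+t^{2^j}+t^{2^{j+1}}+\cdots$ and lies in $\stab_{2^j-1}(\L_2)$ as well; the general statement that $\sum_u r_u(1-t)^{i_u}\in(t^{p+1})$ forces $\sum_u r_u(1-t)^{-i_u}\in(t^{p+1})$ is proved in the paper via binomial-coefficient identities. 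So no sequence of the kind you want exists in $\L_2$.

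The paper's actual example is quite different and much more rigid: it realizes $G\cong\Z^2=\langle p,r\rangle$ as an automaton group over a five-letter alphabet by placing two disjoint one-generator automata on disjoint sub-alphabets (Lemma \ref{thm:direct-product-automata}), with $p$ acting through order-$2$ local permutations and $r$ acting as a $3$-cycle at level $1$. For the swap automorphism $\theta\colon p\leftrightarrow r$ one has $p^{2^n}\in\stab_n(G)$ for every $n$, yet $(p^{2^n})\theta=r^{2^n}\notin\stab_1(G)$ because $3\nmid 2^n$; by Proposition \ref{ucsta} this violates uniform continuity. If you want to salvage your strategy, you need a group in which two generators interchanged by an automorphism have genuinely incompatible ``local orders'' at bounded depth --- the lamplighter group, where the whole action is governed by a single formal-power-series module, is provably too homogeneous for this.
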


We specialize our machinery to the particular case of the lamplighter group
$\L_k = \mathbb{Z}_k \wr \mathbb{Z}$ and get the following result.

\begin{theorem}\label{thm:all-lamps-are-uniform}
%\label{luc}
Every automorphism of $\L_k$ admits a continuous extension for the depth metric.
\end{theorem}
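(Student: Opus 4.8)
The plan is to reduce the theorem to a uniform‑continuity statement about a few ``building‑block'' automorphisms of $\L_k$, and then to settle those using the bi‑invariance of the depth metric together with the explicit Silva--Steinberg self‑similar presentation of $\L_k$ over $A=\Z_k$. First I record the metric facts. The level stabilizers $\stab(A^n)$ are normal in $\aut(T_A)$ (a tree automorphism permutes $A^n$, so a conjugate of an automorphism fixing $A^n$ pointwise still fixes $A^n$ pointwise), and the open ball of radius $2^{-n}$ about the identity is exactly $\stab(A^n)$; hence $d$ is bi‑invariant, $\aut(T_A)=\varprojlim_n\aut(T_A)/\stab(A^n)$ is profinite — so compact and complete — and $\oo{\L_k}$ is a complete group in which $\L_k$ is dense. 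By the standard extension theorem for uniformly continuous maps into a complete space, an automorphism $\theta$ of $\L_k$ extends to a continuous automorphism of $\oo{\L_k}$ exactly when $\theta$ and $\theta\inv$ are uniformly continuous for $d$ — equivalently, when $\theta$ preserves the filtration topology defined by the finite‑index subgroups $K_n=\stab(A^n)\cap\L_k$ — the extension then being automatically a homomorphism (multiplication is continuous) with continuous inverse. Since the automorphisms of $\L_k$ admitting such an extension form a subgroup of $\aut(\L_k)$, it suffices to treat a generating set.

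Next I would describe $\aut(\L_k)$. Write $\L_k=B\rtimes\Z$ with base group $B=\Z_k[t,t^{-1}]=\bigoplus_{\Z}\Z_k$. Because $B$ is precisely the torsion subgroup of $\L_k$ (every element of $B$ has order dividing $k$, while every element with nonzero $\Z$‑component has infinite order), it is characteristic; so each $\theta\in\aut(\L_k)$ acts on $\L_k/B\cong\Z$ as $\epsilon\cdot\mathrm{id}$ for some $\epsilon\in\{\pm1\}$ and restricts to a $\Z_k$‑linear automorphism of $B$ intertwining multiplication by $t$ with multiplication by $t^{\epsilon}$. An elementary computation then shows that $\theta$ is determined by $\epsilon$, by the image $u\in\Z_k[t,t^{-1}]^{\times}$ of the lamp generator $a$, and by the image of the stable letter $t$, and that $\aut(\L_k)$ is generated by: the inner automorphisms $\inn(\L_k)$; the inversion $\iota$ ($a\mapsto a$, $t\mapsto t^{-1}$); the base translations $\theta_g$ (fixing $B$ pointwise, $t\mapsto gt$, for $g\in B$); and the lamp rescalings $\mu_u$ ($a\mapsto u$, $t\mapsto t$, for $u\in\Z_k[t,t^{-1}]^{\times}$).

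Conjugation by any fixed $\gamma\in\aut(T_A)$ is an isometry of $d$ by bi‑invariance, hence uniformly continuous together with its inverse; taking $\gamma\in\L_k$ shows that every inner automorphism extends, by conjugation by $\gamma$, to an automorphism of $\oo{\L_k}$. For $\iota$, the $\theta_g$, and the $\mu_u$ I would work in the Silva--Steinberg presentation (the Cayley machine of $\Z_k$) and, whenever it is available, realize the automorphism as conjugation by a concrete element of $\aut(T_A)$ normalizing $\L_k$ — the level‑uniform relabelling $x\mapsto cx$ of $A$ for a scalar rescaling $\mu_c$ ($c\in\Z_k^{\times}$), an odometer‑inverting ``reversal'' automorphism for $\iota$, and so on — after which bi‑invariance concludes as in the inner case. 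For the Laurent‑polynomial rescalings (which for composite $k$ are genuinely present, since $\Z_k[t,t^{-1}]^{\times}$ is far larger than $\Z_k^{\times}\cdot t^{\Z}$), and as a uniform alternative throughout, I would instead verify uniform continuity by hand: one shows that $\dep(w)$ for $w\in\L_k$ is controlled by the wreath‑product data of $w$ (the size of its $\Z$‑component and the coordinate window of its lamp configuration), and that each of $\iota$, $\theta_g$, $\mu_u$ perturbs that data by a bounded amount — inverting the odometer, multiplying $t$ by a fixed lamp word, or multiplying a lamp configuration by a fixed Laurent polynomial only shifts the relevant window boundedly — whence $\dep(\theta(w))$ is bounded below by a fixed increasing function of $\dep(w)$, and symmetrically for $\theta\inv$.

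The metric reduction, the appeal to bi‑invariance, and the passage from a generating set to all of $\aut(\L_k)$ are formal. The substantive part, and the expected main obstacle, is the combinatorial analysis of the Silva--Steinberg automaton: computing how deep a given element of $\L_k$ sits as an explicit function of its wreath‑product description, and bounding how the outer automorphisms displace it (equivalently, exhibiting the normalizing tree automorphisms). Within this, the polynomial‑unit rescalings for composite $k$ are the most delicate case, and they are also where a clean global conjugator is least likely to be available, so the direct depth‑distortion estimate is the safer route there.
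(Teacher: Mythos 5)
Your high-level architecture is exactly the paper's: reduce continuous extendability to uniform continuity of $\theta$ and $\theta\inv$ (Lemma \ref{ucont}), translate that into the stabilizer condition $\forall m\,\exists n:\ (\stab_n(\L_k))\theta\subseteq\stab_m(\L_k)$ (Proposition \ref{ucsta}), note that the extendable automorphisms form a subgroup, and check a generating set of $\aut(\L_k)$ consisting of $\stab_{\L_k}(\xi)$ (your $\mu_u$), base translations (your $\theta_g$, the paper's $\Psi_k$) and the inversion $\zeta$ (Lemma \ref{gene}). The bi-invariance observation handles inner automorphisms cleanly, and the level-uniform relabelling $x\mapsto cx$ does conjugate $\lam_h$ to $\lam_{ch}$ and hence realizes the scalar rescalings $\eta_c$. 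Up to there the proposal is sound, and the structural description of $\aut(\L_k)$ is correct in substance.

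The gap is in the only substantive step, the verification for the non-inner generators, where the mechanism you propose rests on two false premises. First, the depth of $w\in\L_k$ is \emph{not} controlled by the size of its $\Z$-component together with the coordinate window of its lamp configuration. In the power-series model, $w=\gamma\xi^n$ acts by $X\mapsto(X+f(t))(1-t)^{-n}$ with $f(t)=\sum_v r_v(1-t)^{i_v}$, and $w\in\stab_p(\L_k)$ is a congruence mod $t^{p+1}$ in $\Z_k[[t]]$, i.e.\ a system of divisibility conditions on binomial coefficients. Concretely: every single $\beta_i$ has depth $0$ whatever $i$ is (the constant term of $(1-t)^{i}$ is $1$), while $\prod_j\beta_j^{(-1)^j\binom{m}{j}}$, which has the same window $[0,m]$ as $\beta_0\beta_m$, has $f(t)=t^m$ and lies in $\stab_{m-1}\setminus\stab_m$; similarly the depth of $\xi^n$ is governed by divisibility properties of $n$, not by $|n|$ (compare $\xi^{2^{100}}$ and $\xi^{2^{100}+1}$ in $\L_2$). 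Second, the generators do not perturb windows boundedly: $\xi^n\theta_\alpha=(\alpha\xi)^n=\beta_0\beta_1\cdots\beta_{n-1}\xi^n$ spreads the lamp part over a window of length $n$, and $\zeta$ reflects a window $[r,s]$ to $[-s,-r]$, an unbounded displacement. So a "bounded window distortion" estimate can neither be established nor would it imply the needed inclusion of stabilizers. What is actually required — and what the paper's proof of Theorem \ref{luc} supplies — is: that $\stab_{\L_k}(\xi)$ multiplies $f$ by a fixed series and hence preserves the ideal $(t^{p+1})$; that $\theta_\alpha$ contributes the extra term $((1-t)^{-n}-1)/t$, accounting for the drop $\stab_{p+1}\to\stab_p$; and, for $\zeta$, the genuinely nontrivial implication that $\sum_v r_v\binom{i_v}{\ell}=0$ for $\ell=0,\dots,p$ forces $\sum_v r_v\binom{i_v+\ell-1}{\ell}=0$ for the same range, i.e.\ $\sum_v r_v(1-t)^{i_v}\in(t^{p+1})$ implies $\sum_v r_v(1-t)^{-i_v}\in(t^{p+1})$. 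None of this is recoverable from the argument you sketch, and your alternative route via global conjugators in $\aut(T_A)$ is, as you yourself note, unavailable precisely for the hardest generators; so the proposal does not close.
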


On route to prove Theorem \ref{thm:all-lamps-are-uniform} we need
to study the structure of the group $\aut(\L_k)$ and we recover another proof of
a theorem of Taback, Stein and Wong \cite{Stein-Taback-Wong-1} describing $\aut(\L_k)$ and 
another proof of necessary and sufficient conditions for $\aut(\L_k)$ to be finitely generated
which were first explored by Bachmuth, Baumslag, Dyer and Mochizuki \cite{BaBaDyMo}
in the more general setting of $\aut(G)$ for $G$ a metabelian group.
Our proofs are independent and elementary.

%Recall that
%an integer $k \geq 2$ is {\em squarefree} if there exists no prime $p \in \N$ such that $p^2$ divides $k$.

%\begin{theorem}\label{nec-suf-aut-L_k}
%Let $k \geq 2$ have positive prime divisors $p_1,\ldots,p_s$. Then $k$ is squarefree
%if and only if ${\rm Aut}(\L_k)$ is finitely generated.
%\end{theorem}

The paper is organized as follows: in Section \ref{sec:prelim} we recall all the relevant definitions, in Section \ref{sec:depth} 
we study the depth metric and the closure of self-similar groups in ${\rm Aut}(T_A)$ with respect to such metric, in Section
\ref{sec:continuous-extension} we study conditions for the extension of an automorphism to be continuous and produce an
example where such extension is indeed not continuous, in Section \ref{sec:lamplighter} we
focus on the lamplighter group $\L_k$ and determine completely its automorphism group determining that it is made of
continuous bijections. 
%finally in Section \ref{sec:questions} we ask a few questions for of interest for future work.

\section{Preliminaries \label{sec:prelim}}

Given a finite nonempty set $A$, let $A^*$ denote the free monoid on
$A$ and let $\varepsilon$ denote 
the empty word. For a word $u \in A^*$ we write $|u|$ to denote its \emph{length}.
Given $u,v \in A^*$, we say that $u$ is a {\em prefix}
of $v$ if $v = uw$ for some $w \in A^*$.

We may identify $A^*$ with the regular $A$-ary rooted
tree $T_A$, where $A^*$ is the set of nodes, $\varepsilon$ is the root
and $ua$ is a son of $u$ for all 
$u \in A^*$ and $a \in A$ (i.e. the father of $u \in A^+ = A^*
\setminus \{ \varepsilon \}$ is its maximal proper prefix). An {\em
  automorphism} of $T_A$ is then a 
permutation of $A^*$ which preserves length and the prefix relation. 
We shall often work with a canonical alphabet $A_n = \{
0,1,\ldots,n-1\}$, and we shall use the notation $T_n = T_{A_n}$.

Note that $\aut(T_A)$ is a subgroup of the symmetric group on
$A^*$. Every $\p \in \aut(T_A)$ induces an action on the {\em
  boundary} of $T_A$, i.e. the set $A^{\omega}$ of all right infinite
words $a_0a_1a_2\ldots$ Sometimes it is more convenient to consider
the action of $\aut(T_A)$ on $A^{\omega}$. 

The group $\aut(T_A)$ is canonically isomorphic to the infinite wreath product
$S_A \wr S_A \wr \ldots$, where $S_A$ denotes the symmetric group on
$A$. This infinite decomposition involves the so-called local
permutations, which we now define. 

Given $\p \in \aut(T_A)$ and $u \in A^*$, there exists some $\p_u \in
S_A$ such that
\beq
\label{local}
(ua)\p = (u\p)(a\p_u) \; \mbox{ for every }a \in A.
\eeq
We say that $\p_u$ is the \emph{local permutation} induced by
$\p$ at vertex $u$. We also define the {\em cone automorphism} $\p_{uA^*} \in \aut(T_A)$
through
$$(uv)\p = (u\p)(v\p_{uA^*})\quad (v \in A^*).$$
We say that $G \leq \aut(T_A)$ is a {\em self-similar} group if 
$$\forall \p \in G, \; \; \forall u \in A^*\hspace{.5cm} \p_{uA^*} \in G.$$
Finitely generated self-similar groups can be constructed with the
help of finite automata/transducers of a particular type.
Indeed, we define a {\em Mealy machine} to be a structure of the form $\M =
(A,Q,\delta,\lambda)$, where:
\bi
\item
$A$ is a finite nonempty set (alphabet);
\item 
$Q$ is a finite nonempty set (state set);
\item
$\delta:Q \times A \to Q$ is a function (transition function);
\item
$\lambda:Q \times A \to A$ is a function (output function).
\ei
We say that $\M$ is {\em invertible} if the mapping
$$\ba{rcl}
\lambda_q:A&\to&A\\
a&\mapsto&(q,a)\lambda
\ea$$
is a permutation for every $q \in Q$. We define an action $Q \times A^* \to Q$ recursively by
\bi
\item
$q\varepsilon = q$;
\item
$q(ua) = (qu,a)\delta \quad (u \in A^*,\; a \in A)$.
\ei
For every $q \in Q$, we extend $\lambda_q$ to a mapping $\lam_q:A^* \to
A^*$ by setting
\bi
\item
$\varepsilon\lam_q = \varepsilon$;
\item
$(ua)\lam_q = (u\lam_q)(a\lambda_{qu})
\quad (u \in A^*,\; a \in A)$. 
\ei
If $\M$ is invertible then $\lam_q$ is a permutation of $A^*$,
indeed $\lam_q \in \aut(T_A)$. The
{\em automaton group} generated by $\M$ is the (finitely generated)
subgroup of $\aut(T_A)$ generated by $\{ \lam_q \mid q \in Q\}$. It
will be denoted by $\G(\M)$. Automata groups are precisely the
finitely generated self-similar groups.

We present now some examples which will play an important role in the
paper.

\begin{example}
\label{adding}
The adding machine. 
\emph{It is the invertible Mealy machine $\A$ depicted by
$$\xymatrix{
p \ar@(ul,u)^{1|0} \ar[rr]_{0|1} && q \ar@(u,ur)^{0|0} \ar@(r,dr)^{1|1} 
}$$
It is well known that $\G(\A) = \langle \lam_p \rangle$ is an infinite
cyclic group.}
\qed
\end{example}

\begin{example}
\label{cayley}
The Cayley machine of a finite group. 
\emph{Let $H$ be a finite group. The Cayley machine of $H$, introduced by
Krohn and Rhodes in \cite{Krohn-Rhodes-1}, is the invertible Mealy machine $\C_H
= (H,H,\delta,\lambda)$ defined by
$$(h,h')\delta = (h,h')\lambda = hh' \in H \quad (h,h' \in H).$$
If $H$ is abelian, Steinberg and the second author proved in \cite{Silva-Steinberg-1}
that $\G(\C_H) \cong H \wr \Z$, the wreath product of $H$ and $\Z$. If
$G = C_2$ is the group of order 2, we get the Cayley machine
$$\xymatrix{
p \ar@(ul,u)^{0|0} \ar@/^/[rr]^{1|1} && q \ar@(u,ur)^{0|1} \ar@/^/[ll]^{1|0} 
}$$
and the {\em lamplighter group} $\L_2$.}
\qed
\end{example}

\section{The depth metric \label{sec:depth}}

Let $A$ be a finite nonempty alphabet. We define a metric on
$\aut(T_A)$ as follows. Given $\p,\psi \in \aut(T_A)$, let
$$d(\p,\psi) = \left\{
\ba{ll}
2^{-\min\{ n \in \N \; {\big{\lvert}}\; \p|_{A^n} \neq \psi|_{A^n}
\}}&\mbox{ if }\p \neq \psi\\
0&\mbox{ if }\p = \psi
\ea
\right.$$
where $A^n$ denote the $n$-th level of the tree $T_A$.
It is immediate that $d$ is indeed an ultrametric on $\aut(T_A)$,
which we call the {\em depth metric}. As it is remarked in \cite{Bartholdi-Grigorchuk-Sunik-1},
the following result follows from Tychonoff's Theorem.

\begin{proposition}
\label{compact}
The metric space $({\rm Aut}(T_A),d)$ is compact and therefore complete.
\qed
\end{proposition}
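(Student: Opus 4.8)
The plan is to realise $(\aut(T_A),d)$ as being homeomorphic to an infinite product of finite discrete spaces and then invoke Tychonoff's Theorem. First I would set up the identification of $\aut(T_A)$ with the product $P = \prod_{u \in A^*} S_A$ via the family of local permutations: send $\p \in \aut(T_A)$ to $(\p_u)_{u \in A^*}$, where $\p_u$ is defined by \eqref{local}. This map is a bijection. It is injective because, together with $\varepsilon\p = \varepsilon$, the recursion $(ua)\p = (u\p)(a\p_u)$ shows that the family $(\p_u)_u$ determines $\p$ on all of $A^*$. It is surjective because, given an arbitrary family $(\sigma_u)_{u \in A^*} \in P$, the same recursion (with $\p_u$ replaced by $\sigma_u$) defines a length-preserving, prefix-preserving bijection of $A^*$, i.e.\ an element of $\aut(T_A)$, whose local permutations are exactly the $\sigma_u$.

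Next I would check that, under this bijection, the depth metric $d$ induces the product topology on $P$, where each factor $S_A$ is finite and discrete. The key computation is: for $\p,\psi \in \aut(T_A)$ and $n \ge 1$, one has $\p|_{A^n} = \psi|_{A^n}$ if and only if $\p_u = \psi_u$ for every $u \in A^*$ with $|u| < n$. One direction is an easy induction on word length using the recursion; for the converse, note first that agreement on $A^n$ forces agreement on every lower level, since automorphisms preserve prefixes, and then from $(u\p)(a\p_u) = (ua)\p = (ua)\psi = (u\psi)(a\psi_u)$ together with $u\p = u\psi$ for $|u| < n$ we obtain $a\p_u = a\psi_u$ for all $a \in A$. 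Consequently the open ball $\{\psi : d(\p,\psi) < 2^{-n}\}$ coincides with $\{\psi : \p|_{A^n} = \psi|_{A^n}\}$, which under the identification $\p \leftrightarrow (\p_u)_u$ is precisely the set of families agreeing with $(\p_u)_u$ on the finitely many coordinates indexed by words of length $< n$. Since $A^*$ is countable, such cylinders form a neighbourhood basis for the product topology on $P$, so $d$ is a metric compatible with it.

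Finally, Tychonoff's Theorem gives that $P = \prod_{u \in A^*} S_A$ is compact, hence $(\aut(T_A),d)$ is compact; and every compact metric space is complete, which yields the last assertion.

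I do not anticipate a genuine obstacle: the content lies entirely in carefully matching the depth metric with the product topology, and once the bijection $\p \mapsto (\p_u)_u$ is in place this is routine. (Alternatively one could bypass Tychonoff and prove sequential compactness directly by a diagonal argument over the levels: from any sequence $(\p^{(k)})$ extract a subsequence constant on $A^1$, then a further subsequence constant on $A^2$, and so on, and take the diagonal subsequence; its entrywise limit is the automorphism whose restriction to each $A^n$ is the eventual common value. The Tychonoff route is cleaner and is the one indicated in \cite{Bartholdi-Grigorchuk-Sunik-1}.)
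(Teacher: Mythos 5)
Your proposal is correct and takes essentially the same approach as the paper, which simply records that the result follows from Tychonoff's Theorem via the identification of $\aut(T_A)$ with the product $\prod_{u\in A^*}S_A$ of finite discrete spaces; you have merely filled in the routine details (the bijection via local permutations and the matching of the depth metric with the product topology) that the paper leaves to the cited reference. Nothing further is needed.
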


Let $G \leq \aut(T_A)$.
We denote by $\oo{G}$ the topological closure of $G$ in
$(\aut(T_A),d)$. Note that $\oo{G}$, being a closed subset of a
compact space, is itself compact (and therefore complete).

The restrictions of the depth metric to either $G$ or
$\oo{G}$ will still be denoted by $d$ and referred to as the depth metric.

It is easy to check that $\oo{G}$ consists precisely of those
$\p \in \aut(T_A)$ such that
$$\forall n \in \N \; \exists \p_n \in G: \p|_{A^{n}} =
\p_n|_{A^{n}},$$
and is indeed the completion of $(G,d)$. The study of this closure has been introduced
for branch groups by Bartholdi, Grigorchuk and \v{S}uni\'c (see
\cite[Definition 1.18]{Bartholdi-Grigorchuk-Sunik-1}) under the
name {\em tree completion}, which we also adopt.
Note that $\oo{G}$ is a subgroup of $\aut(T_A)$.

Given $n \in \N$, we define the $n$-{\em th level stabilizer} of
$G \leq \aut(T_A)$ to be
$$\stab_n(G) = \{ \p \in G \; {\big{\lvert}}\; \p|_{A^n} = id\}.$$
It is immediate that $\stab_n(G) \unlhd G$ for every $n$. Moreover,
we have a chain
$$G = \stab_0(G) \supseteq \stab_1(G) \supseteq \stab_2(G) \supseteq \ldots$$
where each subgroup has finite index and 
\beq
\label{inst}
\bigcap_{n\in \N} \stab_n(G) = \{ id\}.
\eeq
Therefore $G$ is residually finite.

Write $G_n = G/\stab_n(G)$ and consider the discrete topology
on the $G_n$. By considering the natural projections $G_m \to
G_n$ for all $m \geq n$, we obtain a projective system. It is
easy to see (see also \cite{Bartholdi-Grigorchuk-Sunik-1}) that $(\oo{G},d)$ is the
projective limit of the above projective system, hence it is a profinite
group (in particular, it is a topological group).

In general, the tree completion is a profinite group \cite{Bartholdi-Grigorchuk-Sunik-1, Grigorchuk-1}, but it needs not coincide with the profinite
completion of $G$, when we consider the profinite
metric. However, Grigorchuk remarks in \cite{Grigorchuk-1} that this happens if
every finite index subgroup of $G$ contains some $\stab_n(G)$
(that is, $G$ satisfies the congruence subgroup property). 

Note also that the topology on $G$ induced by $d$ is none other than the
{\em topology of pointwise convergence}: if we consider $A^*$ endowed
with the discrete topology, then $\lim_{n\to +\infty} \p_n = \p$ holds
in $G$ if and only if 
$$\forall u \in A^*\; \lim_{n\to +\infty} u\p_n = u\p,$$
i.e. each sequence $(u\p_n)_n$ is stationary with limit $u\p$.

\begin{lemma}
\label{ciss}
Let $G \leq {\rm Aut}(T_A)$ be self-similar. Then $\oo{G}$
is also a self-similar subgroup of ${\rm Aut}(T_A)$.
\end{lemma}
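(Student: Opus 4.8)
The plan is to verify self-similarity of $\oo{G}$ directly from the level-by-level description of the tree completion recalled above: an element $\p \in \aut(T_A)$ lies in $\oo{G}$ precisely when, for every $n \in \N$, some element of $G$ agrees with $\p$ on the level $A^n$. So I would fix $\p \in \oo{G}$ and $u \in A^*$ with $|u| = k$, and show that the cone automorphism $\p_{uA^*}$ again lies in $\oo{G}$; that is, that for every $n \in \N$ some element of $G$ agrees with $\p_{uA^*}$ on $A^n$.

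The key observation is that $\p_{uA^*}$ restricted to the first $n$ levels is a ``local'' datum depending only on the restriction of $\p$ to the first $k+n$ levels. Indeed, for $v \in A^n$ the word $uv$ lies in $A^{k+n}$, and since a tree automorphism preserves lengths and the prefix relation, $u\p$ is the length-$k$ prefix of $(uv)\p$; the defining identity $(uv)\p = (u\p)(v\p_{uA^*})$ then exhibits $v\p_{uA^*}$ as exactly the length-$n$ suffix of $(uv)\p$. Hence $(\p_{uA^*})|_{A^n}$ is completely determined by $\p|_{A^{k+n}}$. Now I would choose $\psi \in G$ with $\psi|_{A^{k+n}} = \p|_{A^{k+n}}$, which exists because $\p \in \oo{G}$; applying the previous sentence to both $\p$ and $\psi$ yields $(\p_{uA^*})|_{A^n} = (\psi_{uA^*})|_{A^n}$, while $\psi_{uA^*} \in G$ because $G$ is self-similar. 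Thus $\psi_{uA^*} \in G$ is the required approximant, so $\p_{uA^*} \in \oo{G}$; since $\p \in \oo{G}$ and $u \in A^*$ were arbitrary, $\oo{G}$ is self-similar. One could equally package this topologically: the map $\p \mapsto \p_{uA^*}$ is continuous for $d$ (agreement of $\p,\psi$ on $A^{k+n}$ forces agreement of their cones on $A^n$), so approximating $\p \in \oo{G}$ by a sequence in $G$ and using self-similarity of $G$ together with the fact that $\oo{G}$ is closed gives $\p_{uA^*} \in \oo{G}$.

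I do not anticipate a genuine obstacle here: the whole content is the elementary remark that a cone automorphism is already visible after finitely many levels, and the only care needed is the index bookkeeping (with $|u| = k$, level $n$ for the cone corresponds to level $k+n$ for $\p$) and the observation that agreement of two automorphisms on $A^{k+n}$ forces agreement on every shorter level, in particular on $A^k$, which is what identifies the prefixes $u\p$ and $u\psi$ so that they cancel.
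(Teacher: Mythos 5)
Your proof is correct and is essentially the paper's argument: the paper takes a sequence $\p^{(n)} \to \p$ in $G$ and shows $\p_{uA^*} = \lim_n \p^{(n)}_{uA^*}$ in the topology of pointwise convergence, which is exactly the topological repackaging you sketch at the end, while your main write-up makes the underlying finite-level bookkeeping (that $(\p_{uA^*})|_{A^n}$ depends only on $\p|_{A^{k+n}}$) explicit rather than hiding it inside a continuity claim. There is no gap.
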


\begin{proof}
We have already noted that $\oo{G} \leq \aut(T_A)$. Let $\p \in \oo{G}$
and $u \in A^*$. There exists a sequence $(\p^{(n)})_n$ on $G$ such
that $\p = \lim_{n\to +\infty} \p^{(n)}$. Let $v \in A^*$. It follows easily from
continuity that
$$\ba{lll}
(u\p)(v\p_{uA^*})&=&(uv)\p = \lim_{n\to +\infty} (uv)\p^{(n)} =
\lim_{n\to +\infty} (u\p^{(n)})(v\p^{(n)}_{uA^*})\\
&=&(\lim_{n\to +\infty} u\p^{(n)})(\lim_{n\to +\infty}
v\p^{(n)}_{uA^*}) = (u\p)(\lim_{n\to +\infty}
v\p^{(n)}_{uA^*})),
\ea$$
hence $v\p_{uA^*} = \lim_{n\to +\infty} v\p^{(n)}_{uA^*}$. Since we
are dealing with the topology of pointwise convergence and $v$ is
arbitrary, we get 
$\p_{uA^*} = \lim_{n\to +\infty} \p^{(n)}_{uA^*}$. Since $\p^{(n)} \in G$ and
$G$ is self-similar, we have $\p^{(n)}_{uA^*} \in G$ and so $\p_{uA^*}
\in \oo{G}$. Therefore $\oo{G}$ is self-similar.
\end{proof}

Automata groups, being finitely generated, are always
countable. However, the next result shows that their tree completions
are countable only in trivial cases.

\begin{proposition}
\label{card}
Let $G \leq {\rm Aut}(T_A)$.
\bi
\item[(i)] If $G$ is finite, then $\oo{G} = G$.
\item[(ii)] If $G$ is infinite, then $\oo{G}$ is uncountable.
\ei
\end{proposition}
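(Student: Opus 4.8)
The plan is to handle the two parts separately, using the characterization of $\oo{G}$ given just before the statement: $\p \in \oo{G}$ iff for every $n \in \N$ there is some $\p_n \in G$ with $\p|_{A^n} = \p_n|_{A^n}$.

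For part (i), suppose $G$ is finite. Given $\p \in \oo{G}$, for each $n$ pick $\p_n \in G$ agreeing with $\p$ on $A^n$. Since $G$ is finite, some element $\psi \in G$ occurs as $\p_n$ for infinitely many $n$; but then $\psi|_{A^n} = \p|_{A^n}$ for arbitrarily large $n$, hence for all $n$, so $\p = \psi \in G$. Thus $\oo{G} = G$. (Equivalently, a finite metric space is closed in any ambient space.) This step is routine.

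For part (ii), suppose $G$ is infinite. The key observation is that, since each quotient $G_n = G/\stab_n(G)$ is finite while $\bigcap_n \stab_n(G) = \{id\}$ (equation (\ref{inst})), the chain $G = \stab_0(G) \supseteq \stab_1(G) \supseteq \cdots$ cannot stabilize: if it did, say $\stab_N(G) = \stab_{N+1}(G) = \cdots$, then this common subgroup would equal $\bigcap_n \stab_n(G) = \{id\}$, forcing $G = G_N$ to be finite, a contradiction. So there is an infinite sequence of levels $n_0 < n_1 < n_2 < \cdots$ at which the stabilizer strictly decreases, i.e. $\stab_{n_{k+1}}(G) \subsetneq \stab_{n_k}(G)$ for every $k$. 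For each such $k$ choose $g_k \in \stab_{n_k}(G) \setminus \stab_{n_{k+1}}(G)$; thus $g_k$ acts trivially on level $n_k$ but nontrivially on level $n_{k+1}$, and in particular nontrivially on some level in the interval $(n_k, n_{k+1}]$.

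Now I would build a continuous (in fact Cauchy, hence convergent in the complete space $\oo{G}$) injection from $\{0,1\}^\N$ into $\oo{G}$. For a binary sequence $s = (s_0, s_1, s_2, \ldots)$, consider the partial products $h_m = g_0^{s_0} g_1^{s_1} \cdots g_m^{s_m} \in G$. Since $g_j \in \stab_{n_j}(G) \subseteq \stab_{n_{k+1}}(G)$ for all $j > k$, the sequence $(h_m)_m$ is Cauchy for the depth metric — the tail factors only affect levels beyond $n_{k+1}$ — so it converges to some $\p_s \in \oo{G}$, and $\p_s$ agrees with $h_k$ on level $n_{k}$... more precisely $\p_s|_{A^{n_{k+1}}}$ is determined by $g_0^{s_0}\cdots g_k^{s_k}$ together with the contribution of $g_{k+1}^{s_{k+1}}$ on levels up to $n_{k+1}$. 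The map $s \mapsto \p_s$ is injective: if $s \neq t$, let $k$ be minimal with $s_k \neq t_k$; then $g_0^{s_0}\cdots g_{k-1}^{s_{k-1}} = g_0^{t_0}\cdots g_{k-1}^{t_{k-1}}$ lies in $\stab_{n_{k-1}}(G)$ and both products are modified by $g_k^{\pm 1}$, which is nontrivial on some level $\le n_{k+1}$ while everything appearing afterwards lies in $\stab_{n_{k+1}}(G)$; so $\p_s$ and $\p_t$ differ at some level $\le n_{k+1}$, hence $\p_s \neq \p_t$. Since $\{0,1\}^\N$ is uncountable, $\oo{G}$ is uncountable.

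The main obstacle is the bookkeeping in the injectivity argument: one has to be careful that multiplying the common prefix $g_0^{s_0}\cdots g_{k-1}^{s_{k-1}}$ by different powers of $g_k$ genuinely produces a detectable difference at a \emph{controlled} finite level, and that the later factors $g_j$ ($j > k$), which lie deep in the stabilizer chain, cannot "cancel" that difference. This is exactly what the choice $g_j \in \stab_{n_j}(G) \setminus \stab_{n_{j+1}}(G)$ with strictly increasing $n_j$ guarantees, since all factors past index $k$ fix level $n_{k+1}$ pointwise while $g_k^{\pm1}$ does not fix the level where it acts nontrivially — one just needs to phrase the level comparison so the noncommutativity of $\aut(T_A)$ does not interfere, which works because $\stab_{n_{k+1}}(G)$ is normal and contained in $\stab_m(G)$ for $m \le n_{k+1}$.
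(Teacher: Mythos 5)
Your proof is correct. Part (i) matches the paper's argument (finite subsets of a metric space are closed). For part (ii), however, you take a genuinely different — and more elementary — route than the one the paper actually uses: the paper simply observes that $\oo{G}$ is an infinite profinite group (being the inverse limit of the finite quotients $G/\stab_n(G)$) and invokes the standard fact that an infinite compact Hausdorff group is uncountable (a Baire-category argument: a countable such group would have an isolated point, hence be discrete and finite). Your construction instead exhibits an explicit injection of $\{0,1\}^{\N}$ into $\oo{G}$: the non-stabilizing chain of level stabilizers yields elements $g_k \in \stab_{n_k}(G)\setminus\stab_{n_{k+1}}(G)$, and the partial products $g_0^{s_0}\cdots g_m^{s_m}$ are Cauchy with limits separated at level $n_{k+1}$ for the first index $k$ where two binary sequences differ. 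The two key points of your bookkeeping are sound: the tail factors beyond index $k$ act last and fix $A^{n_{k+1}}$ pointwise, so they cannot disturb the restriction to that level, while the presence or absence of the single factor $g_k$ changes that restriction because $g_k\notin\stab_{n_{k+1}}(G)$ and the common prefix permutes $A^{n_{k+1}}$. (One phrasing quibble: the two products differ by one copy of $g_k$ versus none, not by "$g_k^{\pm1}$" on both sides, but the argument as you run it is the correct one.) What each approach buys: the paper's is a one-line deduction from general profinite machinery already set up in the section, whereas yours is self-contained, avoids Baire category, and shows concretely that $\oo{G}$ contains a Cantor set's worth of elements — in fact it is essentially the argument the authors themselves drafted before compressing it to the profinite one-liner.
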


\begin{proof}
(i) True, since every finite subset of a metric space is closed.
(ii) True, since $\oo{G}$ is an infinite profinite group.

\end{proof}

We shall prove that $\oo{G} = \aut(T_A)$ for an automaton group $G$
only in trivial cases. But we 
start with the following lemma, where the {\em rank} of a finitely
generated group $G$ denotes the minimum cardinality of a generating
set of $G$.

We shall denote by $T_A^{(n)}$ the (rooted) subtree of $T_A$ induced
by the nodes $A^{\leq n}$. It is immediate to see that $\aut(T_A^{(n)}) \cong
(\aut(T_A))/\stab_n(\aut(T_A))$. The following result is well known (see for example
Brunner and Sidki \cite[Section 2]{Brunner-Sidki-1}).

\begin{lemma}
\label{rank}
Let $A$ be a finite alphabet with $|A| \geq 2$ and let $n \geq 1$. Then
${\rm rank}({\rm
  Aut}(T_A^{(n)})) = n$.
\end{lemma}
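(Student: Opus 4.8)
The plan is to prove both inequalities. For the lower bound rank $\geq n$, I would pass to an abelian quotient. The group $\aut(T_A^{(n)})$ is an iterated wreath product $S_A \wr \cdots \wr S_A$ ($n$ factors), and its abelianization is the product of the abelianizations of the factors acting diagonally. Since $|A| \geq 2$, each copy of $S_A$ has nontrivial abelianization $\Z_2$ (the sign map; when $|A|=2$ this is all of $S_A$, and when $|A|\geq 3$ it is still $\Z_2$). A careful computation of the abelianization of the iterated wreath product gives $(\Z_2)^n$: passing to abelianizations, the wreath product $H \wr S_A$ abelianizes to $H^{ab} \times (S_A)^{ab}$ because the base group $H^{A}$ contributes only its coinvariants under the permutation action, which is a single copy of $H^{ab}$. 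Iterating, $\aut(T_A^{(n)})^{ab} \cong (\Z_2)^n$, which needs $n$ generators, so $\rank(\aut(T_A^{(n)})) \geq n$.

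For the upper bound rank $\leq n$, I would exhibit an explicit generating set of size $n$. The natural candidates are the elements $g_1, \ldots, g_n$ where $g_k$ acts as a fixed transposition (say swapping $0$ and $1$) at every vertex of level $k-1$ and trivially elsewhere; equivalently, $g_k$ is the ``uniform'' tree automorphism whose only nontrivial local permutations, all equal to the transposition $(0\,1)$, sit at level $k-1$. One then argues by induction on $n$ that $\{g_1,\ldots,g_n\}$ generates $\aut(T_A^{(n)})$: the subgroup $\stab_{n-1}(\aut(T_A^{(n)}))$ is the base group of the top wreath decomposition, i.e. a direct product of $|A|^{n-1}$ copies of $S_A$ indexed by level-$(n-1)$ vertices, and conjugating $g_n$ by words in $g_1,\ldots,g_{n-1}$ (which by induction already generate a group surjecting onto $\aut(T_A^{(n-1)})$, hence act transitively on level $n-1$) produces the transposition $(0\,1)$ at each level-$(n-1)$ vertex separately; these transpositions generate each $S_A$ factor, hence all of $\stab_{n-1}$. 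Combined with the inductive hypothesis this yields all of $\aut(T_A^{(n)})$.

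The main obstacle is making the upper-bound induction fully rigorous: one must check that the projections of $g_1,\ldots,g_{n-1}$ to $\aut(T_A^{(n-1)})$ are exactly (conjugates/analogues of) the generators handled by the inductive hypothesis, so that transitivity on level $n-1$ really is available, and then verify that a single transposition together with a transitive permutation group generates the full symmetric group $S_A$ — the latter is a classical fact (a transitive subgroup of $S_m$ containing a transposition need not be all of $S_m$, but here we get \emph{every} transposition of the form "$(0\,1)$ moved around," which together do generate $S_A$ when we also note we can move the support, since the conjugating group is $2$-transitive in the relevant range; if $|A| = 2$ this is immediate, and for $|A| \geq 3$ one uses that $\aut(T_A^{(n-1)})$ acts as the full symmetric group on level $n-1$ by the inductive hypothesis, giving all transpositions). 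Since the statement is flagged as well known with a reference to Brunner--Sidki, a clean way to finish is to cite that computation for the generating set and only spell out the abelianization argument for the lower bound; but I would prefer to include the short inductive argument above for self-containedness.
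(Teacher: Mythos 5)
Your lower bound is correct and is essentially the paper's own argument: the surjection $\aut(T_A^{(n)})\to(\Z_2)^n$ obtained by taking, level by level, the product of the signs of the local permutations is exactly the abelianization map you describe, and it forces $\rank(\aut(T_A^{(n)}))\geq n$. (This inequality is all that is actually used in Proposition \ref{proper}; the paper delegates the upper bound to Brunner--Sidki.)

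Your upper bound argument, however, contains a genuine error. Write $\aut(T_A^{(n)})\cong \aut(T_A^{(n-1)})\ltimes (S_A)^{A^{n-1}}$. Your $g_1,\dots,g_{n-1}$ lie in the complement (their local permutations at level $n-1$ are trivial), while $g_n$ is the \emph{constant} tuple $\bigl((0\,1),\dots,(0\,1)\bigr)$ in the base group. Conjugating a base-group element by an element of the complement merely permutes its coordinates; it does not act inside the individual $S_A$ factors. A constant tuple is therefore fixed by all such conjugations, so every conjugate of $g_n$ by a word in $g_1,\dots,g_{n-1}$ is $g_n$ itself, and you never isolate the transposition at a single level-$(n-1)$ vertex. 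Concretely, for $|A|=2$ and $n=2$ your generators are the root swap and the central element $\bigl((0\,1),(0\,1)\bigr)$ of $\aut(T_2^{(2)})\cong D_4$; they generate a Klein four-group of order $4$ inside a group of order $8$. A correct choice must use, at each level $k-1$, a \emph{non-constant} tuple of local permutations whose orbit under the coordinate permutations induced by the lower levels generates all of $(S_A)^{A^{k-1}}$ (for $|A|=2$, a single transposition supported at one vertex of level $k-1$ works); verifying this is a different and more delicate induction than the one you sketch. Alternatively, since only the inequality $\rank\geq n$ is needed, you can keep your abelianization argument and simply cite Brunner--Sidki for the reverse inequality, as the paper does.
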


We can now prove the following.

\begin{proposition}
\label{proper}
Let $A$ be a finite alphabet with $|A| \geq 2$ and let $G \leq
\aut(T_A)$ be finitely generated. Then $\oo{G} < \aut(T_A)$.
\end{proposition}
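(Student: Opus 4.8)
The plan is to compare the rank of $G$ (and its quotients) with the rank of the truncated automorphism groups $\aut(T_A^{(n)})$, and conclude that $G$ cannot surject onto all of them. More precisely, suppose for contradiction that $\oo{G} = \aut(T_A)$. Since $G$ is finitely generated, say $\rank(G) = r$, every quotient of $G$ has rank at most $r$. Now for each $n$ the natural map $\aut(T_A) \to \aut(T_A)/\stab_n(\aut(T_A)) \cong \aut(T_A^{(n)})$ sends $G$ onto a subgroup $G/\stab_n(G)$; the key point is that if $\oo{G} = \aut(T_A)$, then $G$ is dense, hence its image in the \emph{finite} discrete quotient $\aut(T_A^{(n)})$ is all of $\aut(T_A^{(n)})$. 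Thus $\aut(T_A^{(n)})$ is a quotient of $G$, so $\rank(\aut(T_A^{(n)})) \leq r$ for every $n$. But Lemma \ref{rank} gives $\rank(\aut(T_A^{(n)})) = n$, which exceeds $r$ once $n > r$ --- a contradiction. Hence $\oo{G} \neq \aut(T_A)$, and since $\oo{G} \leq \aut(T_A)$ we get $\oo{G} < \aut(T_A)$.

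The steps, in order, are: (1) assume $\oo{G} = \aut(T_A)$ and record $\rank(G) = r < \infty$; (2) observe that for each $n$ the quotient map $\aut(T_A) \twoheadrightarrow \aut(T_A^{(n)})$ is continuous for the discrete topology on the (finite) target, hence the image of the dense subgroup $G$ is dense, hence everything --- so $\aut(T_A^{(n)}) = G/\stab_n(G)$ is a quotient of $G$; (3) deduce $\rank(\aut(T_A^{(n)})) \leq r$ for all $n$; (4) invoke Lemma \ref{rank} to get $n = \rank(\aut(T_A^{(n)})) \leq r$ for all $n$, the desired contradiction.

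The only slightly delicate point --- and the main thing to get right --- is step (2): that $\oo{G} = \aut(T_A)$ forces $G$ to surject onto each finite level quotient $\aut(T_A^{(n)})$. This is really just unwinding the description of $\oo{G}$ already recorded in the text, namely that $\p \in \oo{G}$ iff for every $n$ there is $\p_n \in G$ with $\p|_{A^n} = \p_n|_{A^n}$; taking $\p$ to range over all of $\aut(T_A)$ shows every coset of $\stab_n(\aut(T_A))$ meets $G$, which is exactly surjectivity of $G \to \aut(T_A^{(n)})$. Everything else is immediate from the fact that a quotient of an $r$-generated group is $r$-generated, together with Lemma \ref{rank}. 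I expect no further obstacle; the argument is short once Lemma \ref{rank} is in hand.
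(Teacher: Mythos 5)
Your argument is correct and coincides with the paper's own elementary proof: assume $\oo{G} = \aut(T_A)$, deduce that $G$ surjects onto each finite truncation $\aut(T_A^{(n)}) \cong G/\stab_n(G)$, and derive a contradiction from Lemma \ref{rank} since a quotient of an $r$-generated group has rank at most $r$. (The paper also notes a one-line alternative via the fact that $\aut(T_A)$ is not topologically finitely generated, but its main written-out proof is exactly yours.)
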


\begin{proof}
Since $G$ is finitely generated, then
$\oo{G}$ is topologically finitely generated. However, it is well known that
$\aut(T_A)$ is not topologically finitely generated. 

We also offer a short
proof not involving profinite groups.
Suppose that $\oo{G} = \aut(T_A)$.
It follows easily from the
definition of closure that $G$ and $\oo{G}$ induce the same
automorphisms of $T_A^{(n)}$ for every $n \in \N$. Hence every $\psi \in
\aut(T_A^{(n)})$ is a restriction of some $\p \in G$. It follows that
$\aut(T_A^{(n)}) \cong G/\stab_n(G)$ for every $n \in \N$.

By Lemma \ref{rank}, $\rank(\aut(T_A^{(k+1)})) = k+1$ for every positive integer $k$. 
If we let $m = \rank(G)$, then we notice that
$$
m+1 = \rank(\aut(T_A^{(m+1)})) = \rank(G/\stab_{m+1}(G)) \leq \rank(G) = m,
$$
so we get a contradiction. Therefore $\oo{G} < \aut(T_A)$.
\end{proof}

In particular, we have $\oo{G} < \aut(T_A)$ for every automaton
group over an alphabet with at least two letters.

\section{Automorphisms of an automaton group \label{sec:continuous-extension}}

\subsection{General results}
It is a natural question to enquire which endomorphisms of an
automaton group admit a continuous extension to 
$(\oo{G},d)$. General topology yields the following result:

\begin{lemma}
\label{ucont}
Let $G \leq {\rm Aut}(T_A)$ and let $\theta:G \to G$ be a mapping.
Then the following conditions are equivalent:
\bi
\item[(i)] $\theta$ admits a continuous extension to $(\oo{G},d)$.
\item[(ii)] $\theta$ is uniformly continuous in $(G,d)$.
\ei
\end{lemma}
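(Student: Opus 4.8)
## Proof Plan for Lemma \ref{ucont}

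The plan is to recognize this as an instance of the standard extension theorem from general topology: a uniformly continuous map from a metric space into a complete metric space extends uniquely to a uniformly continuous map on the completion. Since $(\oo{G}, d)$ is the completion of $(G, d)$ (as noted in the text preceding the lemma) and is itself complete and compact (being a closed subset of the compact space $\aut(T_A)$), the equivalence should fall out of this framework together with the observation that a continuous map on a compact metric space is automatically uniformly continuous.

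First I would prove the implication (ii) $\Rightarrow$ (i). Assuming $\theta$ is uniformly continuous on $(G,d)$, I would define the extension $\oo\theta$ on $\oo{G}$ as follows: given $\p \in \oo{G}$, choose a sequence $(\p^{(n)})_n$ in $G$ with $\p^{(n)} \to \p$; this sequence is Cauchy in $G$, and since $\theta$ is uniformly continuous, $(\p^{(n)}\theta)_n$ is also Cauchy in $G$, hence converges in the complete space $\oo{G}$ to some limit which I set to be $\p\oo\theta$. A routine argument (interleaving two approximating sequences) shows this is well-defined independent of the chosen sequence, and that $\oo\theta$ agrees with $\theta$ on $G$ and is continuous (indeed uniformly continuous). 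Concretely, since $d$ is the depth metric, uniform continuity of $\theta$ means: for every $m$ there is $N(m)$ such that whenever $\p|_{A^{N(m)}} = \psi|_{A^{N(m)}}$ for $\p,\psi \in G$, then $(\p\theta)|_{A^m} = (\psi\theta)|_{A^m}$; this discrete reformulation makes the Cauchy-sequence bookkeeping transparent.

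For (i) $\Rightarrow$ (ii), I would invoke Proposition \ref{compact}: $(\oo{G},d)$ is compact, so any continuous extension $\oo\theta : \oo{G} \to \oo{G}$ is uniformly continuous on $\oo{G}$ by the Heine--Cantor theorem. Restricting to the subspace $G$, we conclude $\theta = \oo\theta|_G$ is uniformly continuous on $(G,d)$.

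I do not anticipate a genuine obstacle here — the content is essentially a citation of classical facts (completion, Heine--Cantor) applied to the already-established Proposition \ref{compact} and the identification of $\oo{G}$ as the completion of $G$. The only points requiring a sentence of care are (a) checking the extension is well-defined, which is the standard diagonal/interleaving trick, and (b) making sure the limit of the Cauchy sequence $(\p^{(n)}\theta)_n$ genuinely lies in $\oo{G}$ rather than merely in $\aut(T_A)$ — but this is immediate since $\oo{G}$ is closed in $\aut(T_A)$. I would therefore keep the proof short, spelling out the $\varepsilon$-$\delta$ (or rather, level-by-level) argument for (ii) $\Rightarrow$ (i) and simply quoting compactness for the converse.
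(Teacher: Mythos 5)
Your proposal is correct and follows essentially the same route as the paper: both directions rest on the identification of $(\oo{G},d)$ as the (compact, hence complete) completion of $(G,d)$, with (ii)~$\Rightarrow$~(i) being the classical extension theorem for uniformly continuous maps into a complete space and (i)~$\Rightarrow$~(ii) being Heine--Cantor on the compact space $\oo{G}$ followed by restriction to $G$. The only difference is that you offer to spell out the Cauchy-sequence construction of the extension, which the paper simply cites as a standard fact.
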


\begin{proof}
(i) $\Rw$ (ii). Since $(\aut(T_A),d)$ is compact by Proposition
\ref{compact} and $\oo{G}$ is a closed subset of $\aut(T_A)$, then
$(\oo{G},d)$ is itself compact (and therefore complete. It is the
completion of $(G,d)$). Hence a continuous extension of $\theta$
to $\oo{G}$ is thus a continuous map over a compact set, so it
is necessarily uniformly continuous with respect to $d$,
and its restriction is $\theta$.

(ii) $\Rw$ (i). Every uniformly continuous transformation of $(G,d)$
admits a continuous extension to its completion, which is precisely
$(\oo{G},d)$.
\end{proof}

Note that, if there is a continuous extension of the mapping $\theta:G \to G$ to its closure $\oo{G}$, 
it is unique. 
If an automorphism admits a continuous extension, we show that the
latter is also an automorphism.

\begin{proposition}
\label{ceia}
Let $G \leq {\rm Aut}(T_A)$ and let $\theta \in {\rm Aut}(G)$ be such that both $\theta$ and $\theta\inv$ are uniformly
continuous. Then its continuous extension
$\oo{\theta}:\oo{G} \to \oo{G}$ is also an automorphism.
\end{proposition}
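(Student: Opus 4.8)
The plan is to show that $\oo{\theta}$ is a bijective group homomorphism, exploiting that $\oo{\theta\inv}$ exists by hypothesis and that continuous extensions to a dense subset are unique.

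\textbf{Homomorphism property.} First I would verify that $\oo{\theta}$ is a group homomorphism. Fix $\p,\psi \in \oo{G}$ and pick sequences $(\p^{(n)})_n, (\psi^{(n)})_n$ in $G$ with $\p^{(n)} \to \p$ and $\psi^{(n)} \to \psi$. Since $(\aut(T_A),d)$ is a topological group (the group operations are continuous for the depth metric, as noted in the excerpt), multiplication is continuous, so $\p^{(n)}\psi^{(n)} \to \p\psi$. By continuity of $\oo{\theta}$ we get $(\p\psi)\oo{\theta} = \lim_n (\p^{(n)}\psi^{(n)})\oo{\theta} = \lim_n (\p^{(n)}\psi^{(n)})\theta = \lim_n (\p^{(n)}\theta)(\psi^{(n)}\theta)$, using that $\theta$ is a homomorphism on $G$. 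Again by continuity of multiplication this last limit equals $(\lim_n \p^{(n)}\theta)(\lim_n \psi^{(n)}\theta) = (\p\oo{\theta})(\psi\oo{\theta})$. Hence $\oo{\theta}$ is a homomorphism of $\oo{G}$.

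\textbf{Bijectivity.} Since $\theta\inv:G\to G$ is also uniformly continuous, Lemma \ref{ucont} gives it a continuous extension $\oo{\theta\inv}:\oo{G}\to\oo{G}$. I claim $\oo{\theta\inv}$ is a two-sided inverse of $\oo{\theta}$. Indeed, the composite $\oo{\theta}\circ\oo{\theta\inv}:\oo{G}\to\oo{G}$ is continuous and restricts to $\theta\circ\theta\inv = \mathrm{id}_G$ on the dense subset $G$; by uniqueness of continuous extensions (the identity map on $\oo{G}$ is a continuous map agreeing with $\mathrm{id}_G$ on $G$), we conclude $\oo{\theta}\circ\oo{\theta\inv} = \mathrm{id}_{\oo{G}}$. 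The symmetric argument gives $\oo{\theta\inv}\circ\oo{\theta} = \mathrm{id}_{\oo{G}}$. Therefore $\oo{\theta}$ is bijective, and being a bijective homomorphism of $\oo{G}$, it is an automorphism.

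\textbf{Main obstacle.} None of the steps is genuinely hard; the only point requiring care is making sure the continuity of the group operations on $(\aut(T_A),d)$ — hence on $(\oo G,d)$ — is available, so that limits may be passed through products. This is exactly the content of the (currently commented-out) statement that $(\aut(T_A),d)$ is a topological group, which follows from the ultrametric estimate that if $\p,\psi$ agree with $\p_0,\psi_0$ up to level $m$ then $\p\psi$ agrees with $\p_0\psi_0$ up to level $m$ and likewise for inversion. Alternatively, one may avoid invoking the topological-group property globally and instead argue directly with pointwise convergence: for each $u\in A^*$, evaluate $(u)(\p^{(n)}\psi^{(n)})\theta$ and note the sequences $u\p^{(n)}$ and $(u\p^{(n)})\psi^{(n)}$ stabilize, which suffices to push the limit through. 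Either route works; I would state the needed continuity of the operations explicitly and then carry out the short computation above.
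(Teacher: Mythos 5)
Your proof is correct and follows essentially the same route as the paper: the homomorphism property is verified by passing limits of sequences from $G$ through products (relying on the continuity of multiplication, which the paper has available since $(\oo{G},d)$ is a topological group), and bijectivity is obtained by showing $\oo{\theta}$ and $\oo{\theta\inv}$ are mutually inverse. Your appeal to uniqueness of continuous extensions on the dense subset $G$ is simply a careful spelling-out of the step the paper dismisses as "easy to see."
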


\begin{proof}
Let $\p,\psi \in \oo{G}$. Then there exist sequences $(\p^{(n)})_n$
and $(\psi^{(n)})_n$ on $G$ such
that $\p = \lim_{n\to +\infty} \p^{(n)}$ and $\psi = \lim_{n\to +\infty} \psi^{(n)}$.
By continuity, we get
$$\ba{lll}
(\p\psi)\oo{\theta}&=&((\lim_{n\to +\infty} \p^{(n)})(\lim_{n\to
  +\infty} \psi^{(n)}))\oo{\theta} = (\lim_{n\to +\infty}
(\p^{(n)}\psi^{(n)}))\oo{\theta}\\
&=&\lim_{n\to +\infty} ((\p^{(n)}\psi^{(n)})\theta) = \lim_{n\to
  +\infty} ((\p^{(n)}\theta)(\psi^{(n)}\theta))\\ 
&=&(\lim_{n\to +\infty} (\p^{(n)}\theta))(\lim_{n\to
  +\infty} (\psi^{(n)}\theta)) = (\lim_{n\to +\infty} \p^{(n)})\oo{\theta}(\lim_{n\to
  +\infty} \psi^{(n)})\oo{\theta} = (\p\oo{\theta})(\psi\oo{\theta}),
\ea$$
thus $\oo{\theta}$ is a group homomorphism. 
%% \textbf{Why is $\theta^{-1}$ uniformly continuous? Actually, why is $\theta^{-1}$ is continuous at all?
%% We can say that $\oo{\theta}$ is still uniformly continuous, since $\oo{G}$ is compact. It
%% is not clear why $\oo{\theta}$ is bijective, though. So how do we finish?
%% }
%% \bigskip
%% \url{https://drexel28.wordpress.com/2010/11/03/extending-uniformly-continuous-functions/}
Since $\oo{\theta\inv}$ is also a group homomorphism, it is easy to
see that $\oo{\theta}$ and $\oo{\theta\inv}$ are mutually
inverse. Therefore $\oo{\theta} \in \aut(\oo{G})$. 
\end{proof}

%We consider the following sufficient condition:
We can relate uniform continuity to stabilizers in the case of
automorphisms. 

\begin{proposition}
\label{ucsta}
Let $G \leq {\rm Aut}(T_A)$ and let $\theta:G \to G$ be an automorphism.
Then the following conditions are equivalent:
\bi
\item[(i)] $\theta$ is uniformly continuous in $(G,d)$;
\item[(ii)] $\forall m \in \N, \; \exists n \in \N: ({\rm
  Stab}_n(G))\theta \subseteq {\rm Stab}_m(G)$.
\ei
\end{proposition}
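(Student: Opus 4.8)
The plan is to rephrase both conditions as statements about cosets of the level stabilizers. The key elementary fact is that, for $\p,\psi\in G$ and $n\in\N$, one has $\p|_{A^n}=\psi|_{A^n}$ if and only if $\psi\p\inv\in\stab_n(G)$; equivalently, the open ball of radius $2^{-n}$ about $\p$ in $(G,d)$ is exactly the coset $\stab_n(G)\,\p$. I would establish this first (it holds because the set of levels on which two automorphisms of $T_A$ agree is a down-set) and note that the cosets $\stab_n(G)\,\p$, $n\in\N$, form a neighbourhood basis at $\p$. Since $d$ takes only the values $0$ and $2^{-n}$, uniform continuity of $\theta$ is then equivalent to the following: for every $m\in\N$ there is $n\in\N$ such that, for all $\p,\psi\in G$, $\psi\p\inv\in\stab_n(G)$ implies $(\psi\theta)(\p\theta)\inv\in\stab_m(G)$.

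The second step uses that $\theta$ is a homomorphism: $(\psi\theta)(\p\theta)\inv=(\psi\p\inv)\theta$, so the implication becomes simply $\psi\p\inv\in\stab_n(G)\Rw(\psi\p\inv)\theta\in\stab_m(G)$. Now the element $g:=\psi\p\inv$ ranges over all of $G$ as $\p,\psi$ range over $G$ (e.g. take $\p=id$, $\psi=g$), and the implication depends on $\p,\psi$ only through $g$. Hence the displayed condition is equivalent to: for every $m\in\N$ there is $n\in\N$ with $g\in\stab_n(G)\Rw g\theta\in\stab_m(G)$ for all $g\in G$, that is, $(\stab_n(G))\theta\subseteq\stab_m(G)$. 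This is precisely (ii), and running the chain of equivalences in either direction gives the proposition.

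I do not anticipate a real obstacle. The only point requiring a little attention is the routine matching of strict versus non-strict inequalities among powers of $2$ when passing between the $\varepsilon$--$\delta$ formulation of uniform continuity and the stabilizer formulation; this causes no difficulty because the value set of $d$ is discrete, so that for $\p,\psi\in G$ the conditions $d(\p,\psi)<2^{-n}$ and $\psi\p\inv\in\stab_n(G)$ are genuinely equivalent. In the implication (i)$\Rw$(ii) one also uses that $\theta$, being an automorphism, fixes $id$, so that the ball around $(id)\theta=id$ is $\stab_m(G)$ itself.
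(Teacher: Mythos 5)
Your proof is correct and follows essentially the same route as the paper: the paper's argument is exactly the chain of equivalences rewriting $d(\p,\psi)<2^{-n}$ as $\psi\inv\p\in\stab_n(G)$, using that $\theta$ is a homomorphism to replace $(\psi\theta)\inv(\p\theta)$ by $(\psi\inv\p)\theta$, and then quantifying over the single element $\psi\inv\p\in G$. Your additional remarks (the discreteness of the value set of $d$, and that agreement of two automorphisms on level $n$ forces agreement on all lower levels) are the same implicit facts the paper relies on.
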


\begin{proof}
In fact, condition (i) is equivalent to the following equivalent statements
$$\ba{l}
\forall m \in \N, \; \exists n \in \N, \; \forall \p,\psi \in G\;
(d(\p,\psi) < 2^{-n} \Rw d(\p\theta,\psi\theta) < 2^{-m}) \Longleftrightarrow \\
{} \\
\forall m \in \N, \; \exists n \in \N, \; \forall \p,\psi \in G\;
(\p|_{A^n} = \psi|_{A^n} \Rw (\p\theta)|_{A^m} = (\psi\theta)|_{A^m}) \Longleftrightarrow \\
{} \\
\forall m \in \N, \; \exists n \in \N, \; \forall \p,\psi \in G\;
(\psi\inv\p \in \stab_n(G) \Rw (\psi\inv\p)\theta \in \stab_m(G)) \Longleftrightarrow \\
{} \\
\forall m \in \N, \; \exists n \in \N, \;
((\stab_n(G))\theta \subseteq \stab_m(G)).
\end{array}$$
\end{proof}

\begin{corollary}
\label{inner}
Let $G \leq {\rm Aut}(T_A)$ and let $\theta:G \to G$ be an inner automorphism.
Then $\theta$ is uniformly continuous in $(G,d)$.
\end{corollary}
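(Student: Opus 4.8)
The plan is to reduce the statement to Proposition \ref{ucsta}. An inner automorphism of $G$ has the form $\theta:\p \mapsto g\inv \p g$ for some fixed $g \in G$, so by Proposition \ref{ucsta} it suffices to verify condition (ii): that for every $m \in \N$ there is some $n \in \N$ with $(\stab_n(G))\theta \subseteq \stab_m(G)$.

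The key observation is that the level stabilizers are normal in $G$: as recorded in Section \ref{sec:depth}, $\stab_m(G) \unlhd G$ for every $m \in \N$. Consequently, conjugation by any element $g \in G$ maps $\stab_m(G)$ bijectively onto itself, that is, $(\stab_m(G))\theta = \stab_m(G)$. Hence, given $m \in \N$, we may simply take $n = m$, and then $(\stab_n(G))\theta = \stab_m(G) \subseteq \stab_m(G)$, which is exactly condition (ii) of Proposition \ref{ucsta}. Therefore $\theta$ is uniformly continuous in $(G,d)$.

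There is essentially no obstacle here: the corollary is an immediate consequence of the normality of the level stabilizers combined with the characterization of uniform continuity in Proposition \ref{ucsta}. One may further note, applying Proposition \ref{ceia} (the inverse automorphism $\theta\inv$, being conjugation by $g\inv$, is inner as well and hence also uniformly continuous by the same argument), that the continuous extension $\oo{\theta}$ is an automorphism of $\oo{G}$, indeed the inner automorphism implemented by $g$ regarded as an element of $\oo{G}$.
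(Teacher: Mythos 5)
Your proof is correct and follows exactly the paper's argument: it invokes the normality of the level stabilizers to get $(\stab_n(G))\theta = \stab_n(G)$ and then applies Proposition \ref{ucsta}. The additional remark about the extension being the inner automorphism of $\oo{G}$ induced by $g$ is a correct bonus, but the core argument is identical to the paper's one-line proof.
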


\proof
By Proposition \ref{ucsta}, since $(\stab_n(G))\theta = \stab_n(G)$
for every inner automorphism $\theta$ of $G$.
\qed

\medskip
Recall that a subgroup $H$ of a group $G$ is \emph{characteristic} if $\varphi(H) \le H$,
for all $\varphi \in \mathrm{Aut}(G)$.

\begin{corollary}
\label{fulli}
Let $G \leq {\rm Aut}(T_A)$ be such that ${\rm Stab}_n(G)$ is characteristic for every $n \in \N$. Then every automorphism of $G$
is uniformly continuous in $(G,d)$.
\end{corollary}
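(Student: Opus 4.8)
The plan is to reduce the statement immediately to Proposition~\ref{ucsta}. Fix an arbitrary $\theta \in \aut(G)$; by that proposition, to show $\theta$ is uniformly continuous in $(G,d)$ it suffices to verify condition (ii) there, namely that for every $m \in \N$ there exists $n \in \N$ with $(\stab_n(G))\theta \subseteq \stab_m(G)$.

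First I would unwind the definition of ``characteristic'' in the right-action notation used throughout the paper: saying that $\stab_m(G)$ is characteristic means precisely that $(\stab_m(G))\p \subseteq \stab_m(G)$ for every $\p \in \aut(G)$. Applying this with $\p = \theta$ gives $(\stab_m(G))\theta \subseteq \stab_m(G)$. Then I would simply take $n = m$, so that $(\stab_n(G))\theta = (\stab_m(G))\theta \subseteq \stab_m(G)$, which is exactly condition (ii) of Proposition~\ref{ucsta}. Hence $\theta$ is uniformly continuous, and since $\theta$ was arbitrary, so is every automorphism of $G$.

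There is essentially no obstacle here; the only point requiring a moment of care is the direction of the action — one must note that Proposition~\ref{ucsta}(ii) asks only for the \emph{inclusion} $(\stab_n(G))\theta \subseteq \stab_m(G)$, not an equality, so that the defining inclusion for a characteristic subgroup (with the choice $n=m$) is already enough. It would also be natural to record here the consequence that, combining this corollary with Proposition~\ref{ceia} applied to both $\theta$ and $\theta\inv$, every automorphism of such a $G$ extends to an automorphism of $\oo{G}$.
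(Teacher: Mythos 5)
Your proof is correct and follows exactly the route the paper intends: the corollary is an immediate application of Proposition~\ref{ucsta}(ii) with $n=m$, using the defining inclusion $(\stab_m(G))\theta \subseteq \stab_m(G)$ of a characteristic subgroup (the paper leaves this proof implicit, just as it proves Corollary~\ref{inner} by the same mechanism). Your closing remark about combining with Proposition~\ref{ceia} is a sensible observation but not part of the statement.
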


\subsection{Aleshin's automaton and automorphisms \label{nfi}}

The following example shows that condition (ii) of Proposition \ref{ucsta} does not hold for
every automaton group. Let $G$ be the automaton group generated by Aleshin's automaton
$$\xymatrix{
&&p \ar[dll]_{1|0} \ar@/_/[dd]_{0|1}\\
q \ar[drr]_{1|0} \ar@(l,dl)_{0|1}&&\\
&&r \ar@/_/[uu]_{0|0,1|1}
}$$
Then ${\rm Stab}_2(G)$ is not characteristic.
Indeed, Vorobets and Vorobets proved in \cite{Vorobets-Vorobets} that $G$ is the free
group on $\{ \lam_p, \lam_q, \lam_r \}$, hence 
$$\lam_p \mapsto \lam_r\lam_p, \quad \lam_q \mapsto \lam_q, \quad
\lam_r \mapsto \lam_r$$   
defines an elementary Nielsen automorphism $\theta$ of $G$. The action of the generators
of $G$ at depth 2 is described by the diagram
$$\xymatrix{
&&& 00 \ar@/_/[dddrrr]_q \ar@/^/[dd]^p \ar@{<->}[dddlll]_r &&&\\
&&&&&&\\
&&& 10 \ar@/^/[uu]^q \ar@/_/[dlll]_p &&&\\
01 \ar@/_/[urrr]_q \ar@/^/[rrrrrr]^p &&&&&& 11 \ar@/_/[uuulll]_p
\ar@/^/[llllll]^q \ar@{<->}[ulll]_r
}$$
It follows easily that $\lam_p\lam_q \in \stab_2(G)$, but
$(\lam_p\lam_q)\theta = \lam_r\lam_p\lam_q \notin
\stab_2(G)$. Therefore $\stab_2(G)$ is not characteristic.

\subsection{The adding machine and its automorphisms}
We consider now the adding machine to illustrate some of the
problems already introduced and some others we intend to
propose. Write $\p = \lam_p$ in the notation of Example \ref{adding}.
Since $\A$ generates the infinite cyclic group
$\G(\A) = \langle \p \rangle$, there exist only two automorphisms
of $\G(\A)$: the identity and the automorphism $\theta$ defined by
$$\p^n \mapsto \p^{-n}\quad(n \in \Z).$$
The adding machine takes its name from the fact that it reproduces
addition of integers in binary form in the following sense: if $u \in
A_2^m$, then $u\p^n \in A_2^m$ is the unique integer $v$ in binary
form such that $\wt{v}$ is congruent to $\wt{u}+n$ modulo $2^m$, where
$\wt{w}$ is the word $w$ read in reverse order. It follows that
$$\stab_m(\G(\A)) = \langle \p^{2^m} \rangle$$
for every $m \in \N$. But then $\stab_m(\G(\A))$ is a characteristic
subgroup of $\G(\A)$ for every $m \in \N$ and so every automorphism of $\G(\A)$
admits a continuous extension to $\oo{G}$ by Lemma \ref{ucont} and Corollary
\ref{fulli}.

We also claim that every automorphism of $\G(\A)$ can be actually
extended to an automorphism of $\aut(T_2)$. This is trivial for the
identity automorphism, so we only have to consider $\theta$. We show
that $\theta$ is the restriction to $\G(\A)$ of the {\em mirror image}
automorphism $\mu$ of $\aut(T_2)$.

Each $\p \in \aut(T_2)$ is fully determined by the local permutations
$\p_u$ $(u \in A_2^*)$. Let $\sigma \in \aut(A_2^*)$ interexchange $0$
and $1$. We define $\p\mu \in \aut(T_2)$ by
\beq
\label{rank1}
(\p\mu)_u = \p_{u\sigma} \quad (u \in A_2^*).
\eeq
We must prove that $\mu$ is an automorphism of $\aut(T_2)$. We start by
showing that
\beq
\label{mi1}
u(\p\mu)\sigma = u\sigma\p
\eeq
for all $\p \in \aut(T_2)$ and $u \in A_2^*$. We use induction on
$|u|$. The claim holds trivially for $u = \varepsilon$, hence we
assume that $u = va$ with $v \in A_2^*$ and $a \in A_2$, and that
(\ref{mi1}) holds for $v$. We have
$$u(\p\mu)\sigma = (va)(\p\mu)\sigma =
((v(\p\mu))(a(\p\mu)_v))\sigma =
(v(\p\mu)\sigma)(a\p_{v\sigma}\sigma).$$
By the induction hypothesis we get
$$u(\p\mu)\sigma = (v\sigma\p)(a\sigma\p_{v\sigma}) =
((v\sigma)(a\sigma))\p = (va)\sigma\p = u\sigma\p,$$
where $\p_{v\sigma} \sigma =\sigma\p_{v\sigma}$ since $S_{A_2}$ is abelian.
Therefore (\ref{mi1}) holds.

Now let $\p,\psi \in \aut(T_2)$. For every $u \in A_2^*$,
(\ref{rank1}) and (\ref{mi1}) yield
$$((\p\psi)\mu)_u = (\p\psi)_{u\sigma} = \p_{u\sigma}\psi_{u\sigma\p} =
\p_{u\sigma}\psi_{u(\p\mu)\sigma} = 
(\p\mu)_{u}(\psi\mu)_{u(\p\mu)} = ((\p\mu)(\psi\mu))_u,$$
hence $(\p\psi)\mu = (\p\mu)(\psi\mu)$ and so $\mu$ is a group
homomorphism. Since $\mu$ is clearly bijective, it is an automorphism
of $\aut(T_2)$.

Finally, we show that $\theta = \mu|_{\G(\A)}$. Writing $\p = \lam_p$,
it suffices to show that $\p\theta = \p\mu$. Let $u \in A_2^*$. We
must show that
\beq
\label{mi2}
(\p\theta)_u = (\p\mu)_u.
\eeq
Note that (\ref{rank1}) and the fact that the local functions are in $S_{A_2}$ imply
$$
(\p\inv)_u = \p_{u\p\inv},
$$
%% \textbf{I don't understand the equation above. Equation (\ref{rank1}) says that
%% $(\p\psi)_u=\p_u \psi_{u\p}$, but then this implies that
%% $id_u=(\p\inv \p)_u=(\p\inv)_u \p_{u \p\inv}$ which does not seem to imply that
%% $(\p\inv)_u = \p_{u\p\inv}$, but rather that $(\p\inv)_u =(\p_{u\p\inv})\inv$.
%% }
hence 
\beq
\label{mi3}
(\p\theta)_u = (\p\inv)_u = \p_{u\p\inv}.
\eeq
On the other hand,
\beq
\label{mi4}
(\p\mu)_u = \p_{u\sigma}.
\eeq
It follows easily from the construction of the adding machine that 
$$\p_v = \left\{
\ba{ll}
id&\mbox{ if }v \in 0A_2^*\\
(01)&\mbox{ otherwise}
\ea
\right.$$
Now $u\sigma \in 0A_2^*$ if and only if $u \in 1A_2^*$ if and only if
$u\p\inv \in 0A_2^*$, hence $\p_{u\sigma} = \p_{u\p\inv}$ and so (\ref{mi2})
follows from (\ref{mi3}) and (\ref{mi4}). Therefore 
$\theta = \mu|_{\G(\A)}$ and so every automorphism of $\G(\A)$ is a
restriction of some automorphism of $\aut(T_2)$.

We can also compute the subgroup of fixed points
$$\fix(\theta) = \{ \psi \in \G(\A) \mid \psi\theta = \psi \}.$$ 
Since $\p^n\theta = \p^{-n}$, we have $\fix(\theta) = \{ id \}$, hence
finitely generated. We shall see in the next section that this is not
always the case.

\subsection{A non-uniformly continuous automorphism}

In this section we describe a construction to show that direct product of automata groups is itself 
an automaton group. The result is well known (for example, see the construction mentioned immediately after Theorem 2.2 of the survey \cite{Bartholdi-Silva-1}
and Remark \ref{thm:direct-product-known} below). However, to the best of our knowledge
the construction described in the proof below is new and potentially of independent interest
and we will use it to show an application afterwards.

\begin{lemma}
\label{thm:direct-product-automata}
If $G$ and $H$ are automata groups, then 
$G \times H$ is an automaton group too.
\end{lemma}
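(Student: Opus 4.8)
The plan is to construct, from Mealy machines $\M_G = (A,Q,\delta,\lambda)$ and $\M_H = (B,P,\eta,\mu)$ generating $G$ and $H$ respectively, a single Mealy machine over a suitable alphabet whose automaton group is $G \times H$. The natural obstacle is that $G$ and $H$ act on trees over \emph{different} alphabets $A$ and $B$, so to have both act on a common tree we should first replace them by self-similar copies acting on a common alphabet. My first step is therefore an alphabet-equalization: I would embed $T_A$ and $T_B$ into the tree over the alphabet $C = A \sqcup B$ (or, to keep the action faithful and synchronous, pass to $C = A \times B$ and let $Q$ act on the first coordinate and $P$ on the second — more on this below), checking that self-similarity is preserved under this encoding, since a cone automorphism of the encoded action corresponds to a cone automorphism of the original.

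With both groups acting on $T_C$ for a common alphabet $C = A \times B$, the second and main step is to build the product machine. I would take state set $Q \times P$ and define the transition and output functions coordinatewise: on input a letter $(a,b) \in C$, a state $(q,p)$ outputs $\big((a,b)\lambda_q \text{-type thing}, \ldots\big)$ — precisely, $\big((q,a)\lambda,(p,b)\mu\big)$ for the output and $\big((q,a)\delta,(p,b)\eta\big)$ for the transition. Invertibility is immediate because $\lambda_q$ and $\mu_p$ are permutations, so $\lambda_q \times \mu_p$ is a permutation of $C$. One then checks by the recursive definition of $\lam_{(q,p)}$ that the tree automorphism induced by state $(q,p)$ is exactly $\lam_q \times \lam_p$ acting on $A^\omega \times B^\omega \cong C^\omega$, so the automaton group generated is $\langle \lam_q \times \lam_p \mid q \in Q, p \in P\rangle$. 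This is visibly contained in $G \times H$; conversely it contains $\lam_q \times \lam_{p_0}$ for a fixed $p_0$ (take $p$ to be a state acting as identity, or argue that $G \times \{id\}$ and $\{id\} \times H$ are both reachable from the generators by composition), hence all of $G \times H$.

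The last step is to verify the two containments give equality of groups (not just of generating sets), which is routine once one observes that composition in $\aut(T_C)$ respects the product decomposition $\aut(T_A)\times\aut(T_H)\hookrightarrow\aut(T_C)$ coming from $C = A\times B$. The step I expect to be the real content — and where the claimed novelty of the construction lies — is making the generating set of the product machine as economical as possible: a naive construction needs the identity-acting states in each factor, and a cleverer encoding (perhaps using $C = A \sqcup B$ together with a "bookkeeping" component of the state tracking which factor one is currently reading) may let one generate $G\times H$ with $|Q|+|P|$ states rather than $|Q|\cdot|P|$; I would present whichever version is cleanest and defer the optimized state count to a remark. Faithfulness of the product action and the claim $\bigcap_n \stab_n = \{id\}$ for the product tree follow from the same properties for $G$ and $H$ separately.
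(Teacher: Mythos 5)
Your construction is correct in substance, but it is a genuinely different one from the paper's --- in fact it is precisely the \emph{known} construction that the paper contrasts itself against. You take the product alphabet $C=A\times B$ and the product state set $Q\times P$ acting coordinatewise, so that state $(q,p)$ induces $\lam_q\times\lam_p$ on $C^*\cong\{(u,v)\in A^*\times B^*: |u|=|v|\}$; this is the classical product-of-automata construction (attributed in the paper to the survey of Bartholdi--Silva), which yields a connected automaton with $|Q|\cdot|P|$ states over an alphabet of size $|A|\cdot|B|$. The paper instead takes the \emph{disjoint union} $C=A\cup B$, extends each machine to act as the identity on the foreign letters (staying in the same state), and forms the disjoint union of the two modified automata: this gives $|Q|+|P|$ states over an alphabet of size $|A|+|B|$, the two factors visibly commute and intersect trivially because each one only touches its own letters, and no identity states need to be adjoined. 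So the two proofs buy different things: yours is the standard argument, the paper's is a smaller, component-preserving presentation which it then exploits in the subsequent example (the non-uniformly continuous automorphism of $\mathbb{Z}^2$ built from two disjoint components over $\{0,1\}$ and $\{2,3,4\}$).

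One genuine caution about your version: the generating set $\{\lam_q\times\lam_p\mid q\in Q,\ p\in P\}$ generates only a subdirect product of $G\times H$ in general (if $G=H=\mathbb{Z}$ with one generating state each, you get the diagonal $\mathbb{Z}$, not $\mathbb{Z}^2$). Your primary fix --- adjoining an identity-acting state to each factor before taking the product --- is the right one and you do flag it, but your fallback suggestion that $G\times\{\mathrm{id}\}$ and $\{\mathrm{id}\}\times H$ are ``reachable from the generators by composition'' is false for exactly this diagonal reason, so the identity states are not an optimization but a necessity. With that fix in place your argument goes through.
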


\begin{proof}
We can assume that $G=\mathcal{G}(\mathcal{A}) \le \aut(T_A)$ and $H=\mathcal{G}(\mathcal{B}) \le \aut(T_B)$ 
are automata groups where the automata $\mathcal{A}$ and $\mathcal{B}$ are such that the alphabets $A$ and $B$ are disjoint.
We now  construct two new automata $\mathcal{A}'$ and $\mathcal{B}'$ on the same alphabet $C=A\cup B$.
The automaton $\mathcal{A}'$ has the same states of $\mathcal{A}$ and has the same transition and output function on the elements of $A$, while $(b,q)\delta_{\mathcal{A}'}=q$ and $(b,q)\lambda_{\mathcal{A}'}=b$ for all states $q \in \mathcal{A}'$ and $b\in B$.
In a similar fashion, $\mathcal{B}'$ has the same states of $\mathcal{B}$ and has the same transition and output function on the elements of $B$, while $(a,q)\delta_{\mathcal{B}'}=q$ and $(a,q)\lambda_{\mathcal{B}'}=a$ for all states $q \in \mathcal{B}'$ and $a\in A$. Hence the transition
and output functions behave as shown in the picture below:

$$
\xymatrix{
p \ar@(ul,ur)^{b|b} \ar[rr]^{a|a'} && q  
}
\qquad \qquad
\xymatrix{
r \ar@(ul,ur)^{a|a} \ar[rr]^{b|b'} && s 
}
$$

Let $\mathcal{C}$ be the automaton given by the disjoint union of $\mathcal{A}'$ and $\mathcal{B}'$.
Observe that $\mathcal{G}(\mathcal{A}')=\langle Q_{\mathcal{A}} \rangle$
and $\mathcal{G}(\mathcal{B}')=\langle Q_{\mathcal{B}} \rangle$, where $Q_{\mathcal{A}}$ is the set of rational maps on the alphabet $A$
given by taking each state of $\mathcal{A}$ as the initial state of the automaton. We also define $Q_{\mathcal{B}}$ is accordingly.
By construction of $\mathcal{A}'$ and $\mathcal{B}'$, it is clear that $gh=hg$ for all $g \in G$ and $h \in H$ and that
$G \cap H = \{1\}$, therefore we have that $\mathcal{G}(\mathcal{C}) \cong G \times H$ and so $G \times H$ is an automaton group.
\end{proof}

\begin{remark}
\label{thm:direct-product-known}
The direct product construction
mentioned in \cite{Bartholdi-Silva-1} constructs the direct product of automata groups as an automaton group by taking the direct product of the alphabets and
produces a connected automaton for $G \times H$, while the construction above keeps previously existing connected components disjoint from each other.
\end{remark}

We will now use Proposition \ref{ucsta} and Lemma \ref{thm:direct-product-automata} to construct an example of a non-uniformly continuous
automorphism of an automaton group. Consider the following automaton $\mathcal{A}$

$$
\xymatrix{
p \ar@(ul,ur)^{0|1} \ar[rr]^{1|0} && q \ar@(ul,ur)^{0|0, 1|1}
}
$$
and let $p$ be the rational function arising from the automaton with initial state $p$. By construction
\[
1^\omega \to^p 01^\omega \to^p 101^\omega \to^p 001^\omega \to^p 1101^\omega \to^p \ldots
\]
and so, since each image through $p$ will have a tail given by $01^\omega$, then $p$ has infinite order and $\mathcal{G}(\mathcal{A}) = \langle p \rangle \cong \mathbb{Z}$. Consider the following automaton $\mathcal{B}$

$$
\xymatrix{
r \ar@(ul,ur)^{2|3, 3|4} \ar[rr]^{4|2} && s \ar@(ul,ur)^{2|2, 3|3, 4|4}
}
$$
and let $r$ be the rational function coming from the automaton with initial state $r$. By construction
\[
4^\omega \to^r 24^\omega \to^r 324^\omega \to^r 4324^\omega \to^r 2324^\omega \to^r 
34324^\omega \to^r \ldots
\]
and so, since each image through $p$ will have a tail given by $24^\omega$,
then $r$ has infinite order and $\mathcal{G}(\mathcal{B}) = \langle r \rangle \cong \mathbb{Z}$.
If we build the automaton $\mathcal{C}$ as in the proof of Lemma \ref{thm:direct-product-automata},
then $\mathcal{G}(\mathcal{C}) = \langle p,r\rangle \cong \mathbb{Z}^2$.
Let $\theta \in \mathrm{Aut}(\mathcal{G}(\mathcal{C}))$ be the automorphism swapping $p$
with $r$. We argue by contradiction that $\theta$ is uniformly continuous.
By Proposition \ref{ucsta} there exists a positive integer $n$ such that $(\mathrm{Stab}_n(\mathcal{G}(\mathcal{C})))\theta \subseteq \mathrm{Stab}_1(\mathcal{G}(\mathcal{C}))$.

Observe that for every $g \in \mathrm{Aut}(T_2)$ we have $g^{2^n} = id$ at level $n$. In fact, by induction, if $g^{2^{n-1}}$ fixes level $n-1$ pointwise, then even if it swapped some of the children of  level $n-1$, we have that $(g^{2^{n-1}})^2=g^{2^n}$ would fix all such children. Hence, in particular, $p^{2^n} \in 
\mathrm{Stab}_n(\mathcal{G}(\mathcal{C}))$ and thereforre
$r^{2^m}=(p^{2^m})\theta \in \mathrm{Stab}_1(\mathcal{G}(\mathcal{C}))$.
However, since $r$ acts as the $3$-cycle $(2 \; 3 \; 4)$ at level $1$, then 
$(2 \; 3 \; 4)^{2^m} =id$ and so $3=|(2 \; 3 \; 4)|$ divides $2^m$, which is a contradiction and
implying that $\theta$ is not uniformly continuous.

\section{Cayley machines of finite abelian groups \label{sec:lamplighter}}

We follow the notation from \cite{Silva-Steinberg-1}. Let $(H,+)$ be a finite nontrivial abelian group in
additive notation. We consider the action of $\G(\C_H)$ on the
boundary of the tree $T_H$. For
all $h,x_0,x_1,\ldots \in H$, we have 
$$(x_0,x_1,x_2,\ldots)\lam_h = (h+x_0,h+x_0+x_1,h+x_0+x_1+x_2,\ldots)$$
Let $\xi = \lam_0$. For every $h \in H$, let $\alpha_h =
\lam_h\xi\inv$. Since $\G(\C_H) = \langle \lam_h \mid h \in H\rangle$,
we also have
$$\G(\C_H) = \langle \xi, \alpha_h \mid h \in H\rangle.$$
Note that
$$\ba{rcl}
(x_0,x_1,x_2,\ldots)\xi&=&(x_0,x_0+x_1,x_0+x_1+x_2,\ldots),\\
(x_0,x_1,x_2,\ldots)\xi\inv&=&(x_0,-x_0+x_1,-x_1+x_2,\ldots),\\
(x_0,x_1,x_2,\ldots)\alpha_h&=&(h+x_0,x_1,x_2,\ldots).
\ea$$
We can identify $(x_0,x_1,x_2,\ldots) \in H^{\omega}$ with the formal
power series $X = \sum_{n = 0}^{\infty} x_nt^n \in H[[t]]$. Then we have
\beq
\label{powse}
X\xi^n = X(1-t)^{-n}, \quad X\xi^n\alpha_h\xi^{-n} = X + h(1-t)^n
\eeq
for all $X \in H[[t]]$, $n \in \Z$ and where of course we mean $(1-t)^{-1}=\sum_{i=0}^{\infty}t^i$. By \cite{Silva-Steinberg-1}, every $\p \in
\G(\C_H)$ admits a unique factorization of the form
$$\p =
(\xi^{n_1}\alpha_{h_1}\xi^{-n_1})\ldots(\xi^{n_k}\alpha_{h_k}\xi^{-n_k})\xi^m,$$
with $k \in \N$; $n_i,m \in \Z$; $n_1 < \ldots < n_k$; $h_i \in
H\setminus \{ 1 \}$. By a slight abuse of notation, we identify elements $h \in H$ with the
maps $\alpha_h$ and thus we observe that
$$\langle H,\xi \mid [\xi^mh\xi^{-m}, \xi^nh'\xi^{-n}]\; (h,h' \in H;\;
m,n \in \Z)\rangle$$
constitutes a relative presentation of $\G(\C_H) \cong H \wr \Z$.

We now fix  $k \geq 2$ and we consider the 
lamplighter group $\L_k = \G(\C_{C_k})$. We identify $C_k$ with $\Z_k$, the integers modulo $k$ under addition. We denote by $\Z_k^*$ the subgroup of units of $(\Z^k, \cdot)$ (i.e. the integers modulo $k$ coprime with $k$). Writing $\alpha = \alpha_1$, we get a presentation of $\L_k$ of the form
\beq
\label{lapre}
\langle \alpha,\xi \mid \alpha^k, [\xi^m\alpha\xi^{-m},
  \xi^n\alpha\xi^{-n}]\; (m,n \in \Z)\rangle.
\eeq
Note that $\xi^{m_0}\alpha^{n_1}\xi^{m_1}\ldots \alpha^{n_k}\xi^{m_k}$
has finite order if and only if $m_0 + m_1 + \ldots + m_k = 0$. Let
$\fin(\L_k)$ denote the subset of all elements of $\L_k$ of finite
order. Then $\fin(\L_k)$ is an abelian normal subgroup of $\L_k$ and $\L_k/\fin(\L_k)$ 
is infinite cyclic. Moreover, $\fin(\L_k)$ is a direct sum of countably
many cyclic groups of order $k$.

For every $m \in \Z$, we write $\beta_m = \xi^m\alpha\xi^{-m}$. It follows easily from (\ref{lapre}) that each $x \in \L_k$ can be written uniquely in the form
$$x = \left(\prod_{n \in \Z} \beta^{i_n}_n\right)\xi^r,$$
where $r \in \Z$ and $(i_n)_n$ is a mapping $\Z \to \Z_k$ with finite support (i.e. only finitely many terms $i_n$ are nonzero). Moreover, $x \in \fin(\L_k)$ if and only if $r = 0$. 

How can we characterize the endomorphisms $\p$ of $\L_k$? Since $\L_k$ is generated by $\alpha$ and $\xi$, then $\p$ is fully determined by the images of $\alpha$ and $\xi$. It follows easily from (\ref{lapre}) that $\alpha\p$ can be any element $y \in \fin(\L_k)$ and $\xi\p$ can be any element $z \in \L_k$: indeed, the equalities $y^k = 1$ and $[z^myz^{-m}, z^nyz^{-n}] = 1$ follow from the fact that $z^nyz^{-n} \in \fin(\L_k)$ for every $n$, and $\fin(\L_k)$ is an abelian group of exponent $k$.

Given mappings $(i_n)_n$ and $(j_n)_n$ from $\Z$ to $\Z_k$ with finite support and
$r \in \Z$, and in view of the normal form defined before, the correspondence
$$\alpha \mapsto \prod_{n \in \Z} \beta_n^{i_n},\quad \xi \mapsto \left(\prod_{n \in \Z} \beta_n^{j_n}\right)\xi^{r}$$
induces an endomorphism of $\L_k$, which we denote by $\p_{(i_n)_n,(j_n)_n,r}$. Conversely, every endomorphism can be written uniquely in this form. We next show that
\beq
\label{autlg3}
\beta_m\p_{(i_n)_n,(j_n)_n,r} = \prod_{n \in \Z} \beta_{n}^{i_{n-rm}}
\eeq
holds for every $m \in \Z$. This holds trivially for $m = 0$. Assume
now that $m > 0$ and (\ref{autlg3}) holds for $m-1$. Since $\fin(\L_k)$ is abelian, we get
$$\begin{array}{lll}
\beta_m\p_{(i_n)_n,(j_n)_n,r}&=&(\prod_{n \in \Z} \beta_n^{j_n})\xi^{r}(\prod_{n \in \Z} 
\beta_{n}^{i_{n-r(m-1)}})\xi^{-r}(\prod_{n \in \Z} \beta_n^{-j_n})
= \xi^{r}(\prod_{n \in \Z} \beta_{n}^{i_{n-r(m-1)}})\xi^{-r}\\
&=&\prod_{n \in \Z} \beta_{n+r}^{i_{n-r(m-1)}} = \prod_{n \in \Z} \beta_{n}^{i_{n-rm}}.
\end{array}$$
Thus (\ref{autlg3}) holds for every $m > 0$. Similarly, we show
that it holds for $m < 0$. 

Now we can derive 
\beq
\label{autlg4}
\beta_m\p_{(i_n)_n,(j_n)_n,r}\p_{(i'_n)_n,(j'_n)_n,r'}  = \prod_{n \in \Z} \beta_{n}^{\sum_{k \in \Z} i'_{n-r'k}i_{k-rm}}
\eeq
Indeed, using (\ref{autlg3}) twice we get
$$\begin{array}{lll}
\beta_m\p_{(i_n)_n,(j_n)_n,r}\p_{(i'_n)_n,(j'_n)_n,r'}&=&\prod_{k \in \Z} \beta_{k}^{i_{k-rm}}\p_{(i'_n)_n,(j'_n)_n,r'}  =
\prod_{k \in \Z} ( \prod_{n \in \Z} \beta_{n}^{i'_{n-r'k}})^{i_{k-rm}}\\
&=&\prod_{n \in \Z} \beta_{n}^{\sum_{k \in \Z} i'_{n-r'k}i_{k-rm}}.
\end{array}$$

It follows easily from (\ref{autlg4}) that 

\beq
\label{autlg8}
\p_{(i_n)_n,(0)_n,1}\p_{(i'_n)_n,(0)_n,1} = \p_{(i'_n)_n,(0)_n,1}\p_{(i_n)_n,(0)_n,1}.
\eeq
Indeed, $\xi$ is fixed by both endomorphisms, and
$$\alpha\p_{(i_n)_n,(0)_n,1}\p_{(i'_n)_n,(0)_n,1} = \prod_{n \in \Z} \beta_{n}^{\sum_{k \in \Z} i'_{n-k}i_{k}} = \prod_{n \in \Z} \beta_{n}^{\sum_{k \in \Z} i'_{k}i_{n-k}} = \alpha\p_{(i'_n)_n,(0)_n,1}\p_{(i_n)_n,(0)_n,1}.$$

We also get
\beq
\label{autlg9}
\p_{(i_n)_n,(0)_n,1}\p_{(i'_n)_n,(0)_n,1} = \p_{(\sum_{k \in \Z} i'_{n-k}i_{k})_n,(0)_n,1}.
\eeq

This formula can be generalized for an arbitrary number of factors. For every $q \geq 2$, we have
\beq
\label{autlg10}
\p_{(i_n^{(1)})_n,(0)_n,1}\ldots \p_{(i_n^{(q)})_n,(0)_n,1} = \p_{(\sum_{k_1 + \ldots + k_q = n} i_{k_1}^{(1)} \ldots i_{k_q}^{(q)})_n,(0)_n,1}.
\eeq
where the $k_j$ take values in $\Z$. Indeed, the case $q = 2$ is just (\ref{autlg9}) rewritten, so we assume that (\ref{autlg10}) holds for $q \geq 2$ and we prove it for $q+1$. Using (\ref{autlg9}) and the induction hypothesis, we get
$$\begin{array}{lll}
p_{(i_n^{(1)})_n,(0)_n,1}\ldots \p_{(i_n^{(q+1)})_n,(0)_n,1}&=&\p_{(\sum_{k_1 + \ldots + k_q = n} i_{k_1}^{(1)} \ldots i_{k_q}^{(q)})_n,(0)_n,1}\p_{(i_n^{(q+1)})_n,(0)_n,1}\\
&=&\p_{(\sum_{k_{q+1} \in \Z}(\sum_{k_1 + \ldots + k_q = n-k_{q+1}} i_{k_1}^{(1)} \ldots i_{k_q}^{(q)})i_{k_{q+1}}^{(q+1)})_n,(0)_n,1}\\
&=&\p_{(\sum_{k_1 + \ldots + k_{q+1} = n} i_{k_1}^{(1)} \ldots i_{k_{q+1}}^{(q+1)})_n,(0)_n,1}
\end{array}$$
and so (\ref{autlg10}) holds.

It is much harder to identify the automorphisms of $\L_k$. We start from studying the following subgroup of $\aut(\L_k)$
$$\stab_{\L_k}(\xi) = \{ \p \in \aut(\L_k) \mid \xi\p = \xi \}.$$
%If $p_1,\ldots, p_t$ are the (positive) primes dividing $k$, write $$\Z'_k = \{ [j] \in \Z_k \mid p_1\ldots p_t \mbox{ divides }j\}.$$

It is straightforward to check that (\ref{autlg9}) is equivalent to the following, for given $\p,\p' \in \stab_{\L_k}(\xi)$:
\beq
\label{autlg11}
\mbox{if $\alpha\p = \prod_{j=1}^r \beta_{m_j}^{i_j}$ and $\alpha\p' = \prod_{\ell =1}^s \beta_{n_{\ell}}^{i'_{\ell}}$, then
$\alpha\p\p' = \prod_{j=1}^r \prod_{\ell =1}^s \beta_{m_j+n_{\ell}}^{i_ji'_{\ell}}$}.
\eeq
We introduce some notation for specific automorphisms in $\stab_{\L_k}(\xi)$. Let $(\varepsilon_n)_n$ be the mapping from $\Z$ to $\Z_k$ defined by 
$$\varepsilon_n = \left\{
\begin{array}{ll}
1&\mbox{ if }n = 0\\
0&\mbox{ otherwise}
\end{array}
\right.$$
This mapping will be handy throughout the rest of the section.
Let $\lambda$ denote the inner automorphism of $\L_k$ defined by $x\lambda = \xi x\xi\inv$. Clearly, $\lambda \in \stab_{\L_k}(\xi)$.

For every $j \in \Z^*_k$, we define $\eta_j \in \stab_{\L_k}(\xi)$ by $\alpha\eta_j = \alpha^j$. Note that $\alpha^j$ is well defined since $\alpha^k = 1$. If $j\inv$ denotes the inverse of $j$ in $\Z^*_k$, then $\eta_j\eta_{j\inv} = 1 = \eta_{j\inv}\eta_j$ and so $\eta_j$ is indeed an automorphism.

Finally, let $m \in \Z$ and $j \in \Z^*_k$. We define $\gamma_{m,j} \in \stab_{\L_k}(\xi)$ by $\alpha\gamma_{m,j} = \beta_m^j$. 
Note that 
\beq
\label{autlg7}
\lambda = \gamma_{1,1} \quad \mbox{and} \quad \eta_j = \gamma_{0,j}.
\eeq
Since $\alpha\lambda^m = \beta_m$ holds for every $m \in \Z$, it follows easily that
\beq
\label{autlg5}
\gamma_{m,j} = \lambda^m\eta_j.
\eeq

\begin{lemma}
\label{stap}
Let $p$ be a positive prime. Then 
$${\rm Stab}_{\L_p}(\xi) = \{ \gamma_{m,j} \mid m \in \Z,\, j \in \Z^*_p \} = \langle \lambda, \eta_2, \ldots, \eta_{p-1} \rangle.$$
\end{lemma}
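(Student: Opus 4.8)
The plan is to reduce the determination of $\stab_{\L_p}(\xi)$ to the computation of the group of units of the Laurent polynomial ring $\Z_p[t,t^{-1}]$, exploiting that $p$ prime makes $\Z_p$ a field. The reverse inclusion is immediate: by (\ref{autlg7}) and (\ref{autlg5}) we have $\gamma_{m,j}=\lambda^m\eta_j$ with $\lambda=\gamma_{1,1}$ and $\eta_j=\gamma_{0,j}$, and $\lambda,\eta_j\in\stab_{\L_p}(\xi)$, so $\{\gamma_{m,j}\mid m\in\Z,\ j\in\Z_p^*\}\subseteq\stab_{\L_p}(\xi)$.

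The substantive direction is $\stab_{\L_p}(\xi)\subseteq\{\gamma_{m,j}\}$. Given $\p\in\stab_{\L_p}(\xi)$, note first that $\xi\p=\xi$ forces $\xi\p^{-1}=\xi$, so $\p^{-1}\in\stab_{\L_p}(\xi)$ as well; write, in the normal form introduced earlier, $\alpha\p=\prod_{n\in\Z}\beta_n^{i_n}$ and $\alpha\p^{-1}=\prod_{n\in\Z}\beta_n^{i'_n}$ with $(i_n)_n,(i'_n)_n\colon\Z\to\Z_p$ finitely supported. Since both $\p$ and $\p^{-1}$ fix $\xi$, formula (\ref{autlg9}) applies to the composite $\p\p^{-1}=id$ and, by uniqueness of the representation $\p_{(\cdot),(\cdot),\cdot}$, yields $\sum_{k\in\Z}i'_{n-k}i_k=\varepsilon_n$ for all $n$. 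In terms of $P(t)=\sum_n i_nt^n$ and $Q(t)=\sum_n i'_nt^n$ in $\Z_p[t,t^{-1}]$ this says exactly $P(t)Q(t)=1$, i.e.\ $P$ is a unit of $\Z_p[t,t^{-1}]$.

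Here I would invoke the standard fact that over an integral domain $R$ the units of $R[t,t^{-1}]$ are precisely the monomials $ct^m$ with $c\in R^*$ and $m\in\Z$: writing $P=t^af(t)$ and $Q=t^bg(t)$ with $f,g\in R[t]$ of nonzero constant term, $PQ=1$ forces $a+b=0$ and $\deg f+\deg g=0$, hence $f$ and $g$ are constant. Applying this with $R=\Z_p$ (a field, as $p$ is prime) gives $P(t)=jt^m$ with $j\in\Z_p^*$ and $m\in\Z$, that is $\alpha\p=\beta_m^j=\alpha\gamma_{m,j}$; combined with $\xi\p=\xi=\xi\gamma_{m,j}$ and $\L_p=\langle\alpha,\xi\rangle$ we get $\p=\gamma_{m,j}$, completing the first equality.

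For the second equality, $\langle\lambda,\eta_2,\dots,\eta_{p-1}\rangle\subseteq\{\gamma_{m,j}\}=\stab_{\L_p}(\xi)$ is clear since each generator is of the form $\gamma_{m,j}$ and the right-hand side is a group. Conversely, as $p$ is prime, $\Z_p^*=\{1,\dots,p-1\}$ and $\eta_1=id$; since the $\eta_j$ are homomorphisms with $\alpha\eta_j=\alpha^j$, one has $\eta_i\eta_j=\eta_{ij}$, so $\eta_2,\dots,\eta_{p-1}$ generate every $\eta_j$ with $j\in\Z_p^*$, and together with $\lambda$ they generate every $\gamma_{m,j}=\lambda^m\eta_j$. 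The only genuine obstacle is the units computation in $\Z_p[t,t^{-1}]$, which is itself routine once one observes $\Z_p$ is a domain; everything else is bookkeeping with the normal form and composition formulas already in hand.
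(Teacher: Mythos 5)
Your proof is correct and is essentially the paper's argument in different clothing: the paper derives the same convolution identity $\sum_k i'_{n-k}i_k=\varepsilon_n$ from the composition formula and then shows directly, by comparing the extreme points $s_1+s'_1$ and $s_m+s'_{m'}$ of the supports, that the support of $(i_n)_n$ is a singleton — which is precisely the standard proof of your "units of $\Z_p[t,t^{-1}]$ are monomials" fact. Packaging the step as the classification of units of a Laurent polynomial ring over a domain is a clean way to say the same thing, and the remaining bookkeeping (the reverse inclusion and the second equality via $\gamma_{m,j}=\lambda^m\eta_j$) matches the paper.
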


\begin{proof}
We had already established that $\gamma_{m,j} \in \stab_{\L_p}(\xi)$ for all $m,j$. Conversely, let $\psi \in \stab_{\L_p}(\xi)$. Then we may write
$\psi = \p_{(i_n)_n,(0)_n,1}$ for some mapping $(i_n)_n$ from $\Z$ to $\Z_k$ with finite support. Since $\xi\psi\inv = \xi$, we may write $\psi\inv = \p_{(i'_n)_n,(0)_n,1}$ for some $(i'_n)_n$. Since $\alpha\psi\psi\inv = \alpha$, it follows from (\ref{autlg4}) that
\beq
\label{autlg6}
\sum_{k \in \Z} i'_{n-k}i_{k} = \left\{
\begin{array}{ll}
1&\mbox { if }n = 0\\
0&\mbox{ otherwise }
\end{array}.
\right.
\eeq
Assume that
$$\supp((i_n)_n) = \{ n \in \Z \mid i_n \neq 0 \} = \{ s_1,\ldots, s_m\}$$
with $s_1 < \ldots < s_m$. Assume also that $\supp((i'_n)_n) = \{ s'_1,\ldots, s'_{m'}\}$
with $s'_1 < \ldots < s'_{m'}$. Then $k > s_1$ if and only if $s_1 + s'_1 - k < s'_1$, hence
$$\sum_{k \in \Z} i'_{s_1+s'_{1}-k}i_{k} = i'_{s'_1}i_{s_1} \neq 0.$$
Similarly, $k < s_m$ if and only if $s_m + s'_{m'} - k > s'_{m'}$, hence
$$\sum_{k \in \Z} i'_{s_m+s'_{m'}-k}i_{k} = i'_{s'_{m'}}i_{s_m} \neq 0.$$
In view of (\ref{autlg6}), we get $s_1 + s'_1 = s_m + s'_{m'}$, hence $m = m' = 1$ and so $\alpha\psi = \beta_{s_1}^{i_{s_1}}$. Thus $\psi = \gamma_{s_1,i_{s_1}}$ and the first equality of the lemma is established. The second follows from (\ref{autlg7}) and (\ref{autlg5}). 
\end{proof}

We consider now the case of powers of primes. Let $p$ be a positive prime and $s \geq 2$. Given $m \in \Z \setminus \{ 0\}$ and $r \in \Z_{p^s}$, we define an endomorphism $\delta_{m,pr}$ of $\L_{p^s}$ by $\alpha\delta_{m,pr} = \alpha\beta_m^{pr}$.

\begin{lemma}
\label{staps}
Let $p$ be a positive prime and $s \geq 2$. Then 
$$\begin{array}{lll}
{\rm Stab}_{\L_{p^s}}(\xi)&=&\{ \p_{(i_n)_n,(0)_n,1} \mid \mbox{ there exists a unique $m \in \Z$ such that $p$ does not divide }i_m\}\\
&=&\langle \lambda, \eta_j, \delta_{m,pr} \mid j \in \Z^*_{p^s},\, m \in \Z \setminus \{ 0\},\, r \in \Z_{p^s} \rangle.
\end{array}$$
%${\rm Stab}_{\L_{p^s}}(\xi)$ is not finitely generated.
\end{lemma}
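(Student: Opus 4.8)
The plan is to translate $\stab_{\L_{p^s}}(\xi)$ into the unit group of a Laurent polynomial ring and then analyse that unit group via the local structure of $\Z_{p^s}$, extending the method used for Lemma~\ref{stap}. First I would note that, by uniqueness of the normal form, every $\p \in \stab_{\L_{p^s}}(\xi)$ has the form $\p = \p_{(i_n)_n,(0)_n,1}$: the equality $\xi\p = \xi$ forces all $j_n = 0$ and $r = 1$. Formula~(\ref{autlg9}) says precisely that $\p_{(i_n)_n,(0)_n,1} \mapsto \sum_n i_n t^n$ is an isomorphism of monoids from the monoid $M$ of $\xi$-fixing endomorphisms of $\L_{p^s}$ onto the multiplicative monoid of $R := \Z_{p^s}[t,t^{-1}]$ (the convolution in~(\ref{autlg9}) being the Cauchy product), and by~(\ref{autlg8}) this monoid is commutative. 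Since the inverse of a bijective element of $M$ again fixes $\xi$ (from $(\xi\p\inv)\p = \xi = \xi\p$ and injectivity of $\p$, it lies in $M$), under this identification $\stab_{\L_{p^s}}(\xi)$ becomes exactly the unit group $R^\times$, with $\lambda,\eta_j,\delta_{m,pr}$ corresponding to $t$, $j$, $1+prt^m$ respectively.

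For the first displayed equality it then remains to describe $R^\times$. As $p^s = 0$ in $\Z_{p^s}$, the ideal $pR$ is nilpotent, so $f \in R$ is a unit if and only if its image in $R/pR \cong \Z_p[t,t^{-1}]$ is a unit; and since $\Z_p$ is a field, the units of $\Z_p[t,t^{-1}]$ are exactly the monomials $c\,t^m$ with $c \neq 0$. Hence $\sum_n i_n t^n \in R^\times$ if and only if there is a unique index $m$ with $p \nmid i_m$, which is the asserted description of $\stab_{\L_{p^s}}(\xi)$.

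For the second equality, set $W = \langle \lambda,\eta_j,\delta_{m,pr}\rangle$, regarded as a subgroup of $R^\times$. The elements $\lambda$ and $\eta_j$ ($j \in \Z_{p^s}^*$) generate $V := \{\,j\,t^m : j \in \Z_{p^s}^*,\ m\in\Z\,\}$. If $f = \sum_n i_n t^n \in R^\times$ and $i_{m_0}$ is its unique unit coefficient, then $f = (i_{m_0}t^{m_0})\cdot(i_{m_0}\inv t^{-m_0}f)$, where the first factor lies in $V$ and the second in $U_1 := 1 + pR$ (its constant term is $1$ and every other coefficient is divisible by $p$); so $R^\times = V\,U_1$, and it suffices to show $U_1 \subseteq W$. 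I would prove, by descending induction on $i$ from $i=s$ down to $i=1$, that $1 + p^iR \subseteq W$. For $i=s$ this set is $\{1\}$. For the step, given $f = 1 + p^i g \in 1 + p^iR$ with $g = \sum_n b_n t^n$ of finite support, put $P = \prod_n (1 + p^i b_n t^n)$: this lies in $W$, since the factor indexed by $n \neq 0$ is $\delta_{n,pr}$ with $r = p^{i-1}b_n$, and the factor indexed by $n = 0$ (if any) is $1 + p^i b_0 = \eta_{1+p^ib_0}$. Moreover $P \equiv 1 + p^i g = f \pmod{p^{i+1}R}$, because every cross term in the expansion of $P$ carries a factor $p^{i|S|}$ with $i|S| \ge 2i \ge i+1$. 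Hence $f\inv P \in 1 + p^{i+1}R \subseteq W$ by the induction hypothesis, so $f = P\,(f\inv P)\inv \in W$. This proves $U_1 \subseteq W$, whence $R^\times = V\,U_1 \subseteq W \subseteq R^\times$ and the second equality follows.

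The normal-form bookkeeping of the first paragraph and the checks that $t$, $j$, $1+prt^m$ are units of $R$ are routine. The one genuine difficulty is the final induction: a product of binomials $1 + prt^m$ agrees with the naive linear expression $1 + p\sum rt^m$ only modulo higher powers of $p$, and the point of the filtration argument is that one can correct this discrepancy one $p$-adic layer at a time, the process terminating precisely because $p^s = 0$.
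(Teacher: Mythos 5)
Your proof is correct, and it reaches the lemma by a genuinely different and somewhat cleaner route than the paper's. Both arguments rest on the fact, encoded in (\ref{autlg9}), that composition of $\xi$-fixing endomorphisms is convolution of coefficient sequences, but you make the identification of $\stab_{\L_{p^s}}(\xi)$ with the unit group of $R=\Z_{p^s}[t,t\inv]$ global and then invoke two standard facts: units lift along the nilpotent ideal $pR$, and the units of $\mathbb{F}_p[t,t\inv]$ are the nonzero monomials. The paper keeps the ring implicit (it only introduces a polynomial-ring homomorphism $\Lambda$ on an auxiliary submonoid $\Phi_m$): it proves necessity of the ``unique unit coefficient'' condition by a leftmost/rightmost-support argument applied to the convolution identity, obtains sufficiency only as a byproduct of the generation statement, and verifies that $\delta_{m,pr}$ is invertible by proving the stronger identity $\delta_{m,pr}^{p^s}=1$ through a binomial-coefficient divisibility computation. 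That identity is reused later (in Lemma \ref{stanfg} and in (\ref{coprime1})), so your shortcut ``$1+prt^m$ is $1$ plus a nilpotent'' loses that side information, though nothing that the present lemma needs. For the generation claim, the paper runs a double induction on the minimal $p$-adic valuation $t_\psi$ of the coefficients away from the unit one and on the number $u_\psi$ of coefficients attaining it, with a separate case $t_\psi\ge s/2$; your descending induction along the filtration $1+p^iR$, correcting one $p$-adic layer at a time by an explicit product of generators, performs the same reduction with less case analysis and makes the termination mechanism ($p^s=0$) transparent.
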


\begin{proof}
Let $\psi = \p_{(i_n)_n,(0)_n,1} \in \stab_{\L_{p^s}}(\xi)$. 
%We show that
%\beq
%\label{staps1}
%\mbox{there exists a unique $m \in \Z$ such that $p$ does not divide }i_m.
%\eeq
Write $\psi\inv = \p_{(i'_n)_n,(0)_n,1}$. In view of (\ref{autlg4}), (\ref{autlg6}) holds,
therefore $\sum_{k \in \Z} i'_{-k}i_k = 1 \, ({\rm mod} \, p^s)$ and thus 
$\sum_{k \in \Z} i'_{-k}i_k = 1  \, ({\rm mod} \, p)$. It follows that there exists some $m \in \Z$ such that $p$ does not divide $i'_{-m}i_m$. %and thus $p$ does not divide $i'_{-m}$. 
We may assume that $r$ and $m$ are respectively the leftmost and the rightmost integers 
$n \in \mathbb{Z}$ such that $p$ does not divide $i_n$,
%with this property, 
so that $r \leq m$. Let $r'$ and $m'$ be respectively the leftmost and the rightmost integers $n \in \Z$ such that $p$ does not divide $i'_n$, so that $r' \leq m'$.

We claim that $\sum_{k \in \Z} i'_{m'+m-k}i_{k} \neq 0 \, ({\rm mod} \, p)$. Indeed, if $k > m$, then $p$ divides $i_k$, and $p$ divides $i'_{m'+m-k}$ if $k < m$ (equivalent to $m'+m-k > m'$). However, $p$ does not divide $i'_{m'+m-m}i_{m}$ since both factors are invertible in $\Z_{p^s}$. Thus $\sum_{k \in \Z} i'_{m'+m-k}i_{k} \neq 0 \, ({\rm mod} \, p)$. Similarly, $\sum_{k \in \Z} i'_{r'+r-k}i_{k} \neq 0 \, ({\rm mod} \, p)$. In view of (\ref{autlg6}), we get $m' + m = r' + r$. Hence $m'+m = r'+r \leq m' + r$, yielding $m \leq r$ and consequently $m = r$. Therefore there exists a unique $m \in \Z$ such that $p$ does not divide $i_m$. 

For the second part of the proof, let us now now fix an $m \in \Z \setminus \{ 0\}$ and an $r \in \Z_{p^s}$, to show that $\delta_{m,pr} \in \stab_{\L_{p^s}}(\xi)$. In order to do so, we introduce
$$\Phi_m = \{ \p_{(i_n)_n,(0)_n,1} \mid i_0 = 1\mbox{ and $i_n = 0$ if }\frac{n}{m} \notin \N \}.$$
We claim that $\Phi_m$ is a submonoid of endomorphisms of $\L_{p^s}$. It certainly contains the identity. Let $i''_n = \sum_{k \in \Z} i'_{n-k}i_{k}$. By
(\ref{autlg9}), we have $\p_{(i_n)_n,(0)_n,1}\p_{(i'_n)_n,(0)_n,1} = \p_{(i''_n)_n,(0)_n,1}$. Now $i''_0 = \sum_{k \in \Z} i'_{-k}i_{k}$. If $k \neq 0$, then $\frac{-k}{m} \notin \N$ or $\frac{k}{m} \notin \N$, hence $i'_{-k}i_{k} = 0$. It follows that $i''_0 = i'_0i_0 = 1$. Suppose now that $\frac{n}{m} \notin \N$. For every $k \in \Z$, we have $\frac{n-k}{m} \notin \N$ or $\frac{k}{m} \notin \N$, hence $i'_{n-k}i_{k} = 0$ and so $i''_n = 0$. Thus $\Phi_m$ is a monoid.

We define now a mapping $\Lambda$ from $\Phi_m$ to the polynomial ring $\Z_{p^s}[x]$ as follows. Given $\p_{(i_n)_n,(0)_n,1} \in \Phi_m$, let
$$\p_{(i_n)_n,(0)_n,1}\Lambda = \sum_{k \in \N} i_{mk} x^k.$$
Since $i_n$ can only be nonzero for $n \in m\N$, the mapping $\Lambda$ is injective. We show that it is a monoid endomorphism with respect to the multiplicative structure of $\Z_{p^s}[x]$. Clearly, it preserves the identity. Consider now an equality 
\beq
\label{staps1}
\p_{(i_n)_n,(0)_n,1}\p_{(i'_n)_n,(0)_n,1} = \p_{(i''_n)_n,(0)_n,1}
\eeq
in $\Phi_m$. We must show that 
$$\left(\sum_{k \in \N} i_{mk} x^k\right)\left(\sum_{k \in \N} i'_{mk} x^k\right) = \sum_{k \in \N} i''_{mk} x^k,$$
which is equivalent to
$$\sum_{k_1,k_2 \in \N} i_{mk_1}i'_{mk_2} x^{k_1+k_2} = \sum_{k \in \N} i''_{mk} x^k$$
and therefore to
\beq
\label{staps2}
\sum_{j = 0}^k i_{mj}i'_{mk-mj} = i''_{mk}
\eeq
holding for all $k \in \N$.

On the other hand, it follows from (\ref{autlg9}) and (\ref{staps1}) that
\beq
\label{staps3}
i''_{mk} = \sum_{\ell \in \Z} i'_{mk-\ell}i_{\ell}.
\eeq
Since $i_n = i'_n = 0$ if $\frac{n}{m} \notin \N$, (\ref{staps3}) implies (\ref{staps2}) and so $\Lambda$ is a monoid homomorphism.

We prove next that
\beq
\label{staps4}
\delta_{m,pr}^{p^s} = 1.
\eeq

Since $\Lambda$ is an injective monoid homomorphism, it suffices to show that $(\delta_{m,pr}\Lambda)^{p^s} = 1$, i.e. $(1+prx)^{p^s} = 1$. In view of Newton's binomial theorem, this is equivalent to 
$$\sum_{j=1}^{p^s} \binom{p^s}{j} (prx)^j = 0.$$
Thus it suffices to show that 
\beq
\label{staps5}
p^s \mid \binom{p^s}{j} p^j
\eeq
for $j = 1,\ldots,p^s$. 

Given $n \in \Z \setminus \{ 0 \}$, let 
$$n\nu_p = \max\{ k \in \N \mid p^k \mbox{ divides }n\}.$$
Write also $0\nu_p = \infty$. 
Given $j \in \{ 1,\ldots,p^s\}$, we can write
\beq
\label{binomial-product-p}
\binom{p^s}{j} p^j = p^s\frac{p^s-1}{1}\frac{p^s-2}{2}\ldots \frac{p^s-(j-1)}{j-1}\frac{p^j}{j}.
\eeq
Since $(p^s-i)\nu_p = i\nu_p$ for $i = 1,\ldots,j-1$, then $p$ does not divide neither the numerator
nor the denominator of $\frac{p^s-i}{i}$, and since $j\nu_p < j$, then $p$ divides the numerator
but not the denominator of $\frac{p^j}{j}$. Since both sides of equation (\ref{binomial-product-p})
represent a positive integer, then (\ref{staps5}) holds and so does (\ref{staps4}). Therefore $\delta_{m,pr} \in \stab_{\L_{p^s}}(\xi)$. 

So far, we have stablished that
$${\rm Stab}_{\L_{p^s}}(\xi) \subseteq \{ \p_{(i_n)_n,(0)_n,1} \mid \mbox{ there exists a unique $m \in \Z$ such that $p$ does not divide }i_m\}$$
and
$$\langle \lambda, \eta_j, \delta_{m,pr} \mid j \in \Z^*_k,\, m \in \Z \setminus \{ 0\},\, r \in \Z_{p^s} \rangle \subseteq {\rm Stab}_{\L_{p^s}}(\xi).$$
To complete the proof of the lemma, we show that, for every $\p_{(i_n)_n,(0)_n,1}$ such that there exists a unique $m \in \Z$ with $p$ not dividing $i_m$, there exists some $\theta \in \langle \lambda, \eta_j, \delta_{m,pr} \mid j \in \Z^*_k,\, m \in \Z \setminus \{ 0\},\, r \in \Z_{p^s} \rangle$ such that $\p_{(i_n)_n,(0)_n,1}\theta = 1$. Replacing $\p_{(i_n)_n,(0)_n,1}$ by $\p_{(i_n)_n,(0)_n,1}\lambda^{-m}\eta_{i_m\inv}$ if needed, we may assume that $i_0 = 1$ from now on.

We use a double induction scheme. Let $\psi = \p_{(i_n)_n,(0)_n,1}$ and define
$$t_{\psi} = \min\{ i_n\nu_p \mid n \in \Z \setminus \{ 0\}\}, \quad u_{\psi} = |\{ n \in \Z \setminus \{ 0\} \mid i_n\nu_p = t_{\psi} \}|.$$

We start by considering the case $t_{\psi} \geq \frac{s}{2}$. Take $r \in \Z$ such that $i_r\nu_p = t_{\psi}$ and let $\psi' = \psi\delta_{r,-i_r}$. 
By (\ref{autlg9}), we have
$$\psi' = \p_{(i_{n} -i_r i_{n-r})_n,(0)_n,1}.$$
Let $s' = \lceil \frac{s}{2} \rceil$. If $n-r \neq 0$, then $p^{s'}$ divides both $i_r$ and $i_{n-r}$, hence $i_r i_{n-r} = 0$. But if $n = r$ then $i_n - i_r i_{n-r} = 0$, hence $\psi'$ is obtained from $\psi$ by replacing $i_r$ by 0. Applying successively this procedure, we end up by obtaining the identity mapping, hence the claim holds in the case $t_{\psi} \geq \frac{s}{2}$. 

We consider now the case $t_{\psi} = t < \frac{s}{2}$ and we assume that the claim holds for all $\psi'$ such that $t_{\psi'} > t$. Take $r \in \Z$ such that $i_r\nu_p = t_{\psi}$ and let $\psi' = \psi\delta_{r,-i_r}$. 
By (\ref{autlg9}), we have
$$\psi' = \p_{(i_{n} -i_r i_{n-r})_n,(0)_n,1}.$$
If $n = r$ then $i_n - i_r i_{n-r} = 0$. If $n \neq r$, then 
$$(i_{n} -i_r i_{n-r})\nu_p \geq \min\{ (i_{n}\nu_p, (i_r i_{n-r})\nu_p \} = \min\{ (i_{n}\nu_p, i_r\nu_p + i_{n-r}\nu_p \}
\geq \min\{ (i_{n}\nu_p, 2t_{\psi} \},$$
hence either $\psi' = 1$, or $t_{\psi'} > t_{\psi}$, or $t_{\psi'} = t_{\psi}$ and $u_{\psi'} < u_{\psi}$. 
Therefore the claim follows by induction.
\end{proof}

\begin{lemma}
\label{stanfg}
Let $p$ be a positive prime and $s \geq 2$. Then ${\rm Stab}_{\L_{p^s}}(\xi)$ is not finitely generated.
\end{lemma}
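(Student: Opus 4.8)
The plan is to identify $\mathrm{Stab}_{\L_{p^s}}(\xi)$ with (a subgroup of) the multiplicative group of the Laurent polynomial ring $\Z_{p^s}[t,t^{-1}]$, exhibit inside it an explicit non-finitely-generated subgroup, and use that $\mathrm{Stab}_{\L_{p^s}}(\xi)$ is abelian. Concretely, by the first description in Lemma \ref{staps} every $\psi \in \mathrm{Stab}_{\L_{p^s}}(\xi)$ has the form $\psi = \p_{(i_n)_n,(0)_n,1}$, where $(i_n)_n$ is a finitely supported map $\Z \to \Z_{p^s}$ having exactly one index $m$ with $p \nmid i_m$; moreover formula (\ref{autlg9}) says that composing two such endomorphisms corresponds to convolving their defining sequences. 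Hence
$$
\Psi \colon \mathrm{Stab}_{\L_{p^s}}(\xi) \longrightarrow \Z_{p^s}[t,t^{-1}], \qquad \p_{(i_n)_n,(0)_n,1} \longmapsto \sum_{n \in \Z} i_n t^n ,
$$
is an injective homomorphism of $\mathrm{Stab}_{\L_{p^s}}(\xi)$ into the multiplicative monoid of $\Z_{p^s}[t,t^{-1}]$ (its image lands in the units, and in fact equals $\Z_{p^s}[t,t^{-1}]^{\times}$ by the standard description of units of this ring, but we shall not need that). The one fact I do need is the easy observation that for every $f \in \Z_{p^s}[t,t^{-1}]$ the element $1+pf$ has a unique coefficient not divisible by $p$, namely its constant coefficient, so by the description in Lemma \ref{staps} it lies in the image of $\Psi$. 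Consequently $\Psi$ restricts to an isomorphism between $N := \Psi^{-1}\bigl(1+p\,\Z_{p^s}[t,t^{-1}]\bigr)$ and the multiplicative group $1+p\,\Z_{p^s}[t,t^{-1}]$.

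Second, and this is the crux, I claim that $1+p\,\Z_{p^s}[t,t^{-1}]$ is not finitely generated, and here the hypothesis $s \geq 2$ enters. Since $p\cdot p^{s-1}\Z_{p^s}=0$ and $p^{s-1}\Z_{p^s}\subseteq p\Z_{p^s}$, the rule $1+pf \mapsto \overline f$, viewed in $\Z_{p^s}[t,t^{-1}]/p\,\Z_{p^s}[t,t^{-1}] \cong \mathbb{F}_p[t,t^{-1}]$, is well defined on $1+p\,\Z_{p^s}[t,t^{-1}]$; and from $(1+pf)(1+pg)=1+p(f+g+pfg)$ together with $\overline{pfg}=0$ one checks that it is a surjective group homomorphism onto the additive group of $\mathbb{F}_p[t,t^{-1}]$. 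That target is an infinite elementary abelian $p$-group, namely $\bigoplus_{n\in\Z}\Z/p$, hence not finitely generated; therefore neither is $1+p\,\Z_{p^s}[t,t^{-1}]$.

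Finally I would conclude as follows. By (\ref{autlg8}) all elements of $\mathrm{Stab}_{\L_{p^s}}(\xi)$, being of the form $\p_{(i_n)_n,(0)_n,1}$, commute, so $\mathrm{Stab}_{\L_{p^s}}(\xi)$ is abelian. It contains the subgroup $N \cong 1+p\,\Z_{p^s}[t,t^{-1}]$, which is not finitely generated. Since every subgroup of a finitely generated abelian group is itself finitely generated, $\mathrm{Stab}_{\L_{p^s}}(\xi)$ cannot be finitely generated. The only genuine work lies in setting up $\Psi$ correctly out of Lemma \ref{staps} and (\ref{autlg9}), and in verifying that the reduction $1+pf\mapsto\overline f$ is well defined — which is precisely where $s\geq 2$ is used (consistently, for $s=1$ the group $\mathrm{Stab}_{\L_p}(\xi)$ is finitely generated by Lemma \ref{stap}).
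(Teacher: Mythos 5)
Your proof is correct, and it takes a genuinely different route from the paper's. The paper argues by contradiction: if ${\rm Stab}_{\L_{p^s}}(\xi)$ were finitely generated, then by Lemma \ref{staps} it would be generated by a truncated set $A_M$ of the generators $\lambda,\eta_j,\delta_{m,pr}$ with $|m|\leq M$; writing the specific element $\delta_{Ms,p}$ as a product of these and applying the convolution formula (\ref{autlg10}), the coefficient $p$ at position $Ms$ would have to arise from indices $k_1+\ldots+k_q=Ms$ with all $|k_i|\leq M$, forcing at least $s$ nonzero indices, each contributing a factor divisible by $p$, so the product vanishes in $\Z_{p^s}$ --- a contradiction. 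You instead package the convolution formula (\ref{autlg9}) into an injective homomorphism $\Psi$ onto (a subgroup of) the units of $\Z_{p^s}[t,t^{-1}]$, use the first characterization in Lemma \ref{staps} to see that the congruence subgroup $1+p\,\Z_{p^s}[t,t^{-1}]$ lies in the image, and then observe that $1+pf\mapsto \bar f$ is a well-defined surjection onto the additive group $\mathbb{F}_p[t,t^{-1}]\cong\bigoplus_{n\in\Z}\Z/p\Z$, which is not finitely generated; combined with abelianness (\ref{autlg8}) and the fact that subgroups of finitely generated abelian groups are finitely generated, this finishes the argument. All the steps check out: well-definedness of the reduction map is exactly where $s\geq 2$ enters (consistent with Lemma \ref{stap} for $s=1$), invertibility of $1+pf$ follows from nilpotence of $p$, and closure of $1+p\,\Z_{p^s}[t,t^{-1}]$ under inversion (needed for $N$ to be a subgroup mapped isomorphically) is routine, though you might state it explicitly. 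What your approach buys is structural insight: it exhibits an explicit infinite-rank elementary abelian $p$-group sitting as a quotient of a subgroup of ${\rm Stab}_{\L_{p^s}}(\xi)$, rather than merely refuting one putative finite generating set, and it makes transparent that the obstruction lives in the kernel of reduction mod $p$. The paper's argument is more self-contained in that it never leaves the combinatorics of the generators, but both rely on the same inputs, namely Lemma \ref{staps} and the convolution formulas.
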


\begin{proof}
Suppose that ${\rm Stab}_{\L_{p^s}}(\xi)$ is finitely generated. In view of Lemma \ref{staps}, it admits a generating set of the form $A_M$, where
$$A_M = \{ \lambda, \eta_j, \delta_{m,pr} \mid j \in \Z^*_k,\, -M \leq m \leq M,\, r \in \Z_{p^s} \},$$
for some $M \geq 1$. 

Since ${\rm Stab}_{\L_{p^s}}(\xi)$ is abelian by (\ref{autlg8}), we may write $\delta_{Ms,p} = \psi\lambda^t$ for some $\psi \in \langle A_M \setminus \{ \lambda \} \rangle$ and $t \in \Z$. Thus
$$\alpha\psi = \alpha\delta_{Ms,p}\lambda^{-t} = \xi^{-t}(\alpha\beta_{Ms}^p)\xi^t = \beta_{-t}\beta_{Ms-t}^{p}.$$
Since the set of automorphisms $\p_{(i_n)_n,(0)_n,1}$ satisfying $i_0 \in \Z^*_{p^s}$ contains $A_M$ and is closed under composition in view of (\ref{autlg9}), then $t = 0$ and so 
$\delta_{Ms,p} \in \langle A_M \setminus \{ \lambda \} \rangle$. 

Using (\ref{autlg8}) once again, we may write 
$$\delta_{Ms,p} = \p_{(i_n^{(1)})_n,(0)_n,1}\ldots \p_{(i_n^{(q)})_n,(0)_n,1}$$
for some $\p_{(i_n^{(1)})_n,(0)_n,1}, \ldots, \p_{(i_n^{(q)})_n,(0)_n,1}$ (note that $A_M \setminus \{ \lambda \}$ is closed under inversion in view of
(\ref{staps4})). 
By (\ref{autlg10}), we get
$$\delta_{Ms,p} = \p_{(\sum_{k_1 + \ldots + k_q = n} i_{k_1}^{(1)} \ldots i_{k_q}^{(q)})_n,(0)_n,1},$$
hence comparing the $Ms$ components yields
$$p = \sum_{k_1 + \ldots + k_q = Ms} i_{k_1}^{(1)} \ldots i_{k_q}^{(q)}.$$
It follows that $i_{k_1}^{(1)} \ldots i_{k_q}^{(q)} \neq 0$ for some $k_1, \ldots , k_q \in \Z$ satisfying $k_1 + \ldots + k_q = Ms$. In particular, $i_{k_1}^{(1)}, \ldots, i_{k_q}^{(q)} \neq 0$. Given the structure of the elements of $A_M \setminus \{ \lambda \}$, it follows that $-M \leq k_1, \ldots,k_q \leq M$. Since $k_1 + \ldots + k_q = Ms$, there exist at least $s$ nonzero elements among $k_1, \ldots, k_q$ and so it follows that $p$ divides at least $s$ elements among $i_{k_1}^{(1)}, \ldots, i_{k_q}^{(q)}$, and so $i_{k_1}^{(1)} \ldots i_{k_q}^{(q)} = 0$, a 
contradiction. Therefore ${\rm Stab}_{\L_{p^s}}(\xi)$ is not finitely generated.
\end{proof}

We consider next the automorphisms of $\L_k$. Assume that
$\p_{(i_n)_n,(j_n)_n,r}$ is an automorphism of $\L_k$. Since
$\fin(L_k) \cup \{ \xi^r\}$ generates a proper subgroup of $\L_k$ unless $r
= \pm 1$, we can write
$$\aut(\L_k) = \aut_+(\L_k) \cup \aut_-(\L_k),$$
where $\aut_+(\L_k)$ denotes the set of automorphisms of the form $\p_{(i_n)_n,(j_n)_n,1}$ (the {\em positive} automorphisms) and $\aut_-(\L_k)$ denotes the set of automorphisms of the form $\p_{(i_n)_n,(j_n)_n,-1}$ (the {\em negative} automorphisms). Let $\zeta$ be the endomorphism of $\L_k$ defined by $\alpha\zeta = \alpha$ and $\xi\zeta = \xi\inv$. It follows that $\zeta \in \aut_-(\L_k)$. It is easy to check that $\aut_+(\L_k)$ is a (normal) subgroup of index 2 of $\aut(\L_k)$ containing %$\Phi_k$. 
${\rm Stab}_{\L_k}(\xi)$. Thus
\beq
\label{sd}
{\rm Aut}(\L_k) \cong {\rm Aut}_+(\L_k) \rtimes C_2.
\eeq

\begin{lemma}
\label{autst}
Let $k \geq 2$. Then
$$\begin{array}{rcl}
\sigma:{\rm Aut}_+(\L_k)&\to&{\rm Stab}_{\L_{k}}(\xi)\\
\p_{(i_n)_n,(j_n)_n,1}&\mapsto&\p_{(i_n)_n,(0)_n,1}
\end{array}$$
is a surjective group homomorphism. 
\end{lemma}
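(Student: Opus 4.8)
The plan is to check three things in turn: that $\sigma$ is well defined (its value is genuinely an automorphism, hence lies in $\stab_{\L_k}(\xi)$), that it is a group homomorphism, and that it is surjective. The single observation driving the first step is the factorization
\beq
\label{autst1}
\p_{(i_n)_n,(j_n)_n,1} = \p_{(i_n)_n,(0)_n,1}\, \p_{(\varepsilon_n)_n,(j_n)_n,1},
\eeq
valid for all finitely supported $(i_n)_n,(j_n)_n$; I would prove it by evaluating both sides on $\alpha$ and on $\xi$, using that the second factor fixes every $\beta_n$ (immediate from (\ref{autlg3}) with $r=1$, since $\varepsilon_{n-m}$ equals $1$ exactly when $n=m$). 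Thus the second factor leaves the image of $\alpha$ untouched and merely supplies the correct image $(\prod_n\beta_n^{j_n})\xi$ of $\xi$.

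For well-definedness it then only remains to see that $\p_{(i_n)_n,(0)_n,1}$ is invertible (it obviously fixes $\xi$). By (\ref{autst1}) this reduces to showing that $\tau := \p_{(\varepsilon_n)_n,(j_n)_n,1}$ is an automorphism for every finitely supported $(j_n)_n$, since then $\p_{(i_n)_n,(0)_n,1} = \p_{(i_n)_n,(j_n)_n,1}\,\tau\inv$ is a product of automorphisms. As $\tau$ is the map $\alpha\mapsto\alpha$, $\xi\mapsto w\xi$ with $w = \prod_n\beta_n^{j_n}\in\fin(\L_k)$, I would simply write down a two-sided inverse: take $\rho := \p_{(\varepsilon_n)_n,(-j_n)_n,1}$, i.e.\ $\alpha\mapsto\alpha$, $\xi\mapsto w\inv\xi$. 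Both $\tau$ and $\rho$ fix each $\beta_n$ by (\ref{autlg3}), hence $w\tau = w = w\rho$, so $\xi\tau\rho = (w\xi)\rho = w\,(w\inv\xi) = \xi$ and symmetrically $\xi\rho\tau = \xi$; as they also fix $\alpha$, we conclude $\tau\rho = \rho\tau = {\rm id}$. This keeps the argument elementary, with no recourse to residual finiteness or the Hopf property.

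For the homomorphism property, let $\p = \p_{(i_n)_n,(j_n)_n,1}$ and $\p' = \p_{(i'_n)_n,(j'_n)_n,1}$; writing $\p\p' = \p_{(i''_n)_n,(j''_n)_n,1}$ (possible since $\aut_+(\L_k)$ is a subgroup), uniqueness of the normal form gives $\sigma(\p\p') = \p_{(i''_n)_n,(0)_n,1}$. I would then compute $\alpha\p\p' = (\alpha\p)\p' = (\prod_m\beta_m^{i_m})\p' = \prod_m(\beta_m\p')^{i_m}$, apply (\ref{autlg3}) to $\p'$, and use commutativity of $\fin(\L_k)$ to reorganize the double product into $\prod_n\beta_n^{\sum_m i'_{n-m}i_m}$, so that $i''_n = \sum_m i'_{n-m}i_m$; this matches exactly the $(i_n)$-data produced by (\ref{autlg9}) for $\sigma(\p)\sigma(\p') = \p_{(i_n)_n,(0)_n,1}\p_{(i'_n)_n,(0)_n,1}$, whence $\sigma(\p\p') = \sigma(\p)\sigma(\p')$. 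Surjectivity is then immediate: every $\psi\in\stab_{\L_k}(\xi)$ has $\xi\psi=\xi$, hence normal form $\p_{(i_n)_n,(0)_n,1}$, and so lies in $\aut_+(\L_k)$ (which contains $\stab_{\L_k}(\xi)$, as observed before the lemma) with $\sigma(\psi)=\psi$; in fact $\sigma$ is a retraction of $\aut_+(\L_k)$ onto $\stab_{\L_k}(\xi)$. I expect the well-definedness step to be the only real obstacle, and the factorization (\ref{autst1}) — which confines the ``$\xi$-part'' to the manifestly invertible $\tau$ — is exactly what removes it.
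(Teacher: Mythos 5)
Your proof is correct, and it differs from the paper's in one genuine respect: how one sees that $\p_{(i_n)_n,(0)_n,1}$ is actually an automorphism rather than merely an endomorphism fixing $\xi$. The paper gets this for free from multiplicativity: having checked via (\ref{autlg4}) that $(\p\p')\sigma = (\p\sigma)(\p'\sigma)$ for all $\p,\p' \in \aut_+(\L_k)$, it applies this with $\p' = \p\inv$ to conclude that $\p\sigma$ has a two-sided inverse, namely $\p\inv\sigma$. You instead prove well-definedness first and independently, via the factorization $\p_{(i_n)_n,(j_n)_n,1} = \p_{(i_n)_n,(0)_n,1}\,\tau$ with $\tau = \p_{(\varepsilon_n)_n,(j_n)_n,1}$, and exhibit $\p_{(\varepsilon_n)_n,(-j_n)_n,1}$ as an explicit two-sided inverse of $\tau$; since both maps fix every $\beta_n$ by (\ref{autlg3}), the verification is a one-liner, and then $\p\sigma = \p\tau\inv$ is a product of automorphisms. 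Both mechanisms are sound. The paper's is shorter, while yours is more constructive: the identity $\p\sigma = \p\tau\inv$ is in effect the decomposition $\aut_+(\L_k) = \stab_{\L_k}(\xi)\Psi_k$ that the paper only establishes later (equation (\ref{aves6}) and Lemma \ref{gene}), and your $\tau$ is a special case of the elements of $\Psi_k$ shown to be automorphisms in Lemmas \ref{buildaut} and \ref{psik} --- so your argument quietly anticipates that structure without circularity. The remaining steps --- the convolution computation giving multiplicativity, which is exactly (\ref{autlg4})/(\ref{autlg9}), and surjectivity via the observation that $\sigma$ restricts to the identity on $\stab_{\L_k}(\xi) \subseteq \aut_+(\L_k)$ --- coincide with the paper's.
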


\begin{proof}
Let $\p_{(i_n)_n,(j_n)_n,1}, \p_{(i'_n)_n,(j'_n)_n,1} \in {\rm Aut}_+(\L_k)$. By (\ref{autlg4}), we have
$$\begin{array}{lll}
(\p_{(i_n)_n,(j_n)_n,1}\p_{(i'_n)_n,(j'_n)_n,1})\sigma&=&\p_{(\sum_{q \in \Z} i'_{n-q}i_{q})_n, (0)_n,1} = \p_{(i_n)_n,(0)_n,1}\p_{(i'_n)_n,(0)_n,1}\\
&=&(\p_{(i_n)_n,(j_n)_n,1}\sigma)(\p_{(i'_n)_n,(j'_n)_n,1}\sigma).
\end{array}$$
Hence $\sigma$ is a monoid endomorphism from ${\rm Aut}_+(\L_k)$ into the monoid of endomorphisms of $\L_k$ fixing $\xi$. If we take $\p_{(i'_n)_n,(j'_n)_n,1} = \p_{(i_n)_n,(j_n)_n,1}\inv$, it follows that $(\p_{(i_n)_n,(j_n)_n,1}\sigma)(\p_{(i'_n)_n,(j'_n)_n,1}\sigma) = 1$. Similarly, $(\p_{(i'_n)_n,(j'_n)_n,1}\sigma)(\p_{(i_n)_n,(j_n)_n,1}\sigma) = 1$. Therefore $\sigma$ is a (group) homomorphism from ${\rm Aut}_+(\L_k)$ into ${\rm Stab}_{\L_{k}}(\xi)$.
Since $\sigma$ fixes all the automorphisms from ${\rm Stab}_{\L_{k}}(\xi)$, it is surjective.
\end{proof}

\begin{lemma}
\label{buildaut}
Let $k \geq 2$ and $r \in \Z$. Let $(i_n)_n$ and $(j_n)_n$ be mappings with finite support  from $\Z$ to $\Z_k$. Then the following conditions are equivalent:
\bi
\item[(i)] $\p_{(i_n)_n,(j_n)_n,r} \in {\rm Aut}(\L_k)$;
\item[(ii)] $\p_{(i_n)_n,(0)_n,1} \in {\rm Stab}_{\L_k}(\xi)$ and $r = \pm 1$.
\ei
\end{lemma}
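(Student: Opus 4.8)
The plan is to prove the two implications separately, leaning on Lemma~\ref{autst} and on the structural facts already established about $\stab_{\L_k}(\xi)$. The implication (i)$\Rw$(ii) is the substantive direction. Suppose $\p := \p_{(i_n)_n,(j_n)_n,r} \in \aut(\L_k)$. First, as already observed just before equation~(\ref{sd}), the subgroup generated by $\fin(\L_k) \cup \{\xi^r\}$ is proper unless $r = \pm 1$ (since $\L_k/\fin(\L_k) \cong \Z$ and the image of $\p$ must be all of $\L_k$, forcing $\xi^r$ to generate $\Z$ modulo $\fin(\L_k)$, i.e.\ $r = \pm 1$). So $r = \pm 1$ and, after possibly composing with $\zeta$ (which lies in $\aut_-(\L_k)$ and sends $\xi \mapsto \xi\inv$, fixing $\alpha$), we may assume $r = 1$, i.e.\ $\p \in \aut_+(\L_k)$. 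Then Lemma~\ref{autst} applies: $\sigma(\p) = \p_{(i_n)_n,(0)_n,1}$ is a well-defined element of $\stab_{\L_k}(\xi)$, because $\sigma$ is a group homomorphism $\aut_+(\L_k) \to \stab_{\L_k}(\xi)$ — in particular its image consists of genuine automorphisms fixing $\xi$. This gives exactly condition (ii).

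For the converse (ii)$\Rw$(i), assume $\p_{(i_n)_n,(0)_n,1} \in \stab_{\L_k}(\xi)$ and $r = \pm 1$. Since $\p_{(i_n)_n,(0)_n,1}$ is an automorphism fixing $\xi$, it has an inverse in $\stab_{\L_k}(\xi)$, which by the normal-form uniqueness must be of the shape $\p_{(i'_n)_n,(0)_n,1}$, and (\ref{autlg4}) together with (\ref{autlg6}) records the convolution identity $\sum_{k} i'_{n-k} i_k = \varepsilon_n$. The idea is now to build an explicit two-sided inverse for $\p_{(i_n)_n,(j_n)_n,r}$. Factor $\p_{(i_n)_n,(j_n)_n,r}$ as a product of three endomorphisms of controlled type: the ``shear'' $\p_{(i_n)_n,(0)_n,1}$, the pure power $\xi \mapsto \xi^r$ (with $\alpha \mapsto \alpha$), and the inner-type adjustment encoding $(j_n)_n$, namely $\alpha \mapsto \alpha$, $\xi \mapsto (\prod_n \beta_n^{j_n})\xi$. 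One checks directly from the normal form that the composite of these three (in the right order) is $\p_{(i_n)_n,(j_n)_n,r}$. Each factor is visibly invertible: the first by hypothesis, the second because $r = \pm 1$ so $\xi \mapsto \xi^{r}$ is an involutive or identity automorphism (this is essentially $\zeta$ or $\mathrm{id}$), and the third because $\alpha \mapsto \alpha$, $\xi \mapsto (\prod_n \beta_n^{-j_n})\xi$ (after the evident reindexing forced by conjugating through the other factors) inverts it. Hence $\p_{(i_n)_n,(j_n)_n,r} \in \aut(\L_k)$.

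I expect the main obstacle to be the bookkeeping in the converse direction: verifying that the three chosen factors actually compose to $\p_{(i_n)_n,(j_n)_n,r}$, and that the third factor's inverse has finite support with the correctly reindexed exponents. This requires using (\ref{autlg3}) to track how conjugation by $\xi^r$ shifts the indices $n \mapsto n - rm$ and re-deriving a convolution-type identity for the $(j_n)_n$ part; it is routine but genuinely index-heavy. An alternative that sidesteps the explicit inverse: once $r = \pm 1$, note that $\p_{(i_n)_n,(j_n)_n,r}$ and $\p_{(i_n)_n,(0)_n,1}$ differ only by post-composition with an automorphism of $\L_k$ of the form $\xi \mapsto (\prod \beta_n^{c_n})\xi^{r}$, $\alpha \mapsto \alpha$ — and such maps are always automorphisms (their inverse is of the same form), so $\p_{(i_n)_n,(j_n)_n,r}$ is an automorphism iff $\p_{(i_n)_n,(0)_n,1}$ is, i.e.\ iff $\p_{(i_n)_n,(0)_n,1} \in \stab_{\L_k}(\xi)$. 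This ``modification'' viewpoint is probably the cleanest way to organize the writeup, but it still needs the one-line check that the modifying maps form a group of automorphisms.
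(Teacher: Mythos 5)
Your argument for (ii) $\Rw$ (i) is sound and is essentially the paper's: the paper builds the two‑sided inverse of $\p_{(i_n)_n,(j_n)_n,1}$ by hand (setting $\alpha\psi = \alpha\p_{(i_n)_n,(0)_n,1}\inv$ and $\xi\psi = v\xi$ with $v = u\inv\p_{(i_n)_n,(0)_n,1}\inv$, $u = \prod_n\beta_n^{j_n}$), which is exactly the composite of the inverses of your two factors; your ``modification'' packaging, together with the one‑line check that the maps $\alpha\mapsto\alpha$, $\xi\mapsto(\prod_n\beta_n^{c_n})\xi$ compose by adding exponents (since they fix every $\beta_m$ by (\ref{autlg3})) and hence form a group of automorphisms, is a legitimate and slightly cleaner way to organize the same computation. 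Note only that you cannot invoke Lemma \ref{psik} for this, since $\Psi_k$ is defined in the paper \emph{using} the present lemma; your direct check avoids that circularity, as it must. The reduction of $r=-1$ to $r=1$ via $\zeta$ in this direction is also fine.

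The one genuine gap is in (i) $\Rw$ (ii) when $r=-1$. Composing with $\zeta$ does not simply ``let you assume $r=1$'': it replaces $\p_{(i_n)_n,(j_n)_n,-1}$ by $\p_{(i_{-n})_n,(j_{-n})_n,1}$, with the index sequence \emph{reflected}. Applying $\sigma$ from Lemma \ref{autst} to this element therefore yields $\p_{(i_{-n})_n,(0)_n,1}\in\stab_{\L_k}(\xi)$, whereas condition (ii) asserts membership for $\p_{(i_{n})_n,(0)_n,1}$; your line ``$\sigma(\p)=\p_{(i_n)_n,(0)_n,1}$'' conflates the two (and $\sigma$ is not even defined on the original $\p\in\aut_-(\L_k)$). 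The missing step is to show that invertibility of the reflected shear implies invertibility of the original one. The paper does this explicitly: writing $\p_{(i'_n)_n,(0)_n,1}=\p_{(i_{-n})_n,(0)_n,1}\inv$, the convolution identity (\ref{autlg6}) for the pair $\bigl((i_{-n})_n,(i'_n)_n\bigr)$ transforms, after setting $i''_n=i'_{-n}$ and substituting $-n-k$ for $n-k$, into the convolution identity for $\bigl((i_n)_n,(i''_n)_n\bigr)$, so $\p_{(i''_n)_n,(0)_n,1}=\p_{(i_n)_n,(0)_n,1}\inv$. Equivalently, reflection $(i_n)_n\mapsto(i_{-n})_n$ is an automorphism of the convolution monoid of (\ref{autlg9}) and so preserves invertibility. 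This is a short repair, but as written your reduction does not deliver the statement of (ii).
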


\begin{proof}
(i) $\Rw$ (ii). We have already remarked that (i) implies $r = \pm 1$. If $r = 1$, then $\p_{(i_n)_n,(0)_n,1} \in {\rm Stab}_{\L_k}(\xi)$ follows from Lemma \ref{autst}. Hence we may assume that $r = -1$. Composing with $\zeta = \p_{(\varepsilon_n)_n, (0)_n, -1}$, we get
$$\begin{array}{lll}
\alpha\p_{(i_n)_n,(j_n)_n,-1}\zeta&=&\left(\prod_{n \in \Z} \beta_n^{i_n}\right)\zeta = \prod_{n \in \Z} \beta_{-n}^{i_n} = \prod_{n \in \Z} \beta_n^{i_{-n}},\\
\xi\p_{(i_n)_n,(j_n)_n,-1}\zeta&=&\left(\left(\prod_{n \in \Z} \beta_n^{j_n}\right)\xi\inv\right)\zeta = \left(\prod_{n \in \Z} \beta_{-n}^{j_n}\right)\xi = \left(\prod_{n \in \Z} \beta_n^{j_{-n}}\right)\xi,
\end{array}$$
hence $\p_{(i_n)_n,(j_n)_n,-1}\zeta = \p_{(i_{-n})_n,(j_{-n})_n,1} \in \aut_+(\L_k)$ and so $\p_{(i_{-n})_n,(0)_n,1} \in {\rm Stab}_{\L_k}(\xi)$ by the preceding case. Let $\p_{(i'_{n})_n,(0)_n,1} = \p_{(i_{-n})_n,(0)_n,1}\inv$. It follows from (\ref{autlg4}) that
$$\sum_{k \in \Z} i'_{n-k}i_{-k} = \left\{
\begin{array}{ll}
1&\mbox { if }n = 0\\
0&\mbox{ otherwise }
\end{array}.
\right.$$
But then, for $i''_n = i'_{-n}$, we get
$$\sum_{k \in \Z} i''_{-n-k}i_{k} = \left\{
\begin{array}{ll}
1&\mbox { if }n = 0\\
0&\mbox{ otherwise }
\end{array}.
\right.$$
Replacing $-n-k$ by $n-k$, we get $\p_{(i''_{n})_n,(0)_n,1} = \p_{(i_{n})_n,(0)_n,1}\inv$ and so (ii) holds.

(ii) $\Rw$ (i). 
Assume first that $r = 1$. Write $u = \prod_{n \in \Z}\beta_n^{j_n}$ and $v = u\inv \p_{(i_{n})_n,(0)_n,1}\inv$. Let $\psi$ be the endomorphism of $\L_k$ defined by $\alpha\psi = \alpha\p_{(i_{n})_n,(0)_n,1}\inv$ and $\xi\psi = v\xi$. In view of (\ref{autlg3}), we get
$$\alpha\p_{(i_n)_n,(j_n)_n,1}\psi = \alpha\p_{(i_n)_n,(0)_n,1}\psi = \alpha$$
and
$$\xi\p_{(i_n)_n,(j_n)_n,1}\psi = (u\xi)\psi = (u\p_{(i_{n})_n,(0)_n,1}\inv)(v\xi) = \xi,$$
hence $\p_{(i_n)_n,(j_n)_n,1}\psi = 1$. Similarly, we can show that $\psi\p_{(i_n)_n,(j_n)_n,1} = 1$, hence $\p_{(i_n)_n,(j_n)_n,1} \in {\rm Aut}(\L_k)$.

Assume now that $r = -1$. We have
$\alpha\zeta\p_{(i_n)_n,(j_n)_n,-1} = \prod_{n \in \Z} \beta_n^{i_n}$ and 
$$\xi\zeta\p_{(i_n)_n,(j_n)_n,-1} = \xi\inv\p_{(i_n)_n,(j_n)_n,-1} = \xi\prod_{n \in \Z} \beta_n^{-j_n} = (\prod_{n \in \Z} \beta_{n+1}^{-j_n}) \xi = (\prod_{n \in \Z} \beta_{n}^{-j_{n-1}}) \xi.$$
Thus $\zeta\p_{(i_n)_n,(j_n)_n,-1} = \p_{(i_n)_n,(-j_{n-1})_n,1}$, which is an automorphism by the preceding case. Since $\zeta$ is an automorphism, so is 
$\p_{(i_n)_n,(j_n)_n,-1}$.
\end{proof}

\begin{lemma}
\label{embedding}
Let $u,v \geq 2$ be coprime integers. Then $\L_{u} \cong \langle \alpha^v,\xi \rangle \leq \L_{uv}$.
\end{lemma}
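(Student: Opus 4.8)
The plan is to produce an explicit isomorphism from $\L_u$ onto the subgroup $\langle\alpha^v,\xi\rangle$ of $\L_{uv}$, using the presentation (\ref{lapre}) of $\L_u$ together with the uniqueness of normal forms in a lamplighter group recorded above. To keep the two groups apart I would temporarily present $\L_u$ on generators $a,x$, that is
$$\L_u=\langle a,x\mid a^u,\ [x^max^{-m},\,x^nax^{-n}]\ \ (m,n\in\Z)\rangle,$$
and set $b_m=x^max^{-m}$. First I would check that the assignment $a\mapsto\alpha^v$, $x\mapsto\xi$ respects these relators: $(\alpha^v)^u=\alpha^{uv}=1$ since $\alpha$ has order $uv$ in $\L_{uv}$, and $[\xi^m\alpha^v\xi^{-m},\xi^n\alpha^v\xi^{-n}]=[\beta_m^v,\beta_n^v]=1$ because $[\beta_m,\beta_n]=1$ in $\L_{uv}$ by (\ref{lapre}). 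Hence the assignment extends to a homomorphism $\phi\colon\L_u\to\L_{uv}$ which sends $b_m$ to $\xi^m\alpha^v\xi^{-m}=\beta_m^v$ and has image exactly $\langle\alpha^v,\xi\rangle$.

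Next I would prove that $\phi$ is injective by playing off the normal forms on the two sides. Each element of $\L_u$ is uniquely $\left(\prod_{n\in\Z}b_n^{i_n}\right)x^r$ with $r\in\Z$ and $(i_n)_n\colon\Z\to\Z_u$ of finite support. Applying $\phi$,
$$\phi\left(\left(\prod_{n\in\Z}b_n^{i_n}\right)x^r\right)=\left(\prod_{n\in\Z}\beta_n^{vi_n}\right)\xi^r,$$
and the right-hand side is already written in the normal form of $\L_{uv}$, with lamp configuration $n\mapsto vi_n\bmod uv$ and $\xi$-exponent $r$. By uniqueness of that normal form this element is trivial if and only if $r=0$ and $vi_n\equiv 0\pmod{uv}$ for every $n$, i.e. $u\mid i_n$, i.e. $i_n=0$ in $\Z_u$ for every $n$. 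Hence $\phi$ has trivial kernel and $\L_u\cong\langle\alpha^v,\xi\rangle\leq\L_{uv}$, as claimed.

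I do not anticipate a real obstacle; the one point requiring care is to keep the two ambient lamplighter groups distinct while manipulating what is literally the same normal form, and to observe that $\phi$ carries a normal-form word to a normal-form word (immediate, since $\phi$ fixes $\xi$ and sends $a$ to a power of $\alpha$). If one prefers to avoid presentations altogether, an equivalent route runs through the wreath-product description $\L_k\cong C_k\wr\Z$: the subgroup $\langle\alpha^v,\xi\rangle$ meets the base group $\fin(\L_{uv})\cong\bigoplus_{\Z}\Z_{uv}$ exactly in $\bigoplus_{\Z}(v\Z_{uv})$, on which $\xi$ acts by the coordinate shift, and $v\Z_{uv}$ is cyclic of order $u$ (most transparently through the isomorphism $\Z_{uv}\cong\Z_u\times\Z_v$ furnished by $\gcd(u,v)=1$), so that $\langle\alpha^v,\xi\rangle\cong(v\Z_{uv})\wr\Z\cong\Z_u\wr\Z=\L_u$.
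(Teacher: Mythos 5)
Your proposal is correct and follows essentially the same route as the paper: define the homomorphism $\L_u\to\langle\alpha^v,\xi\rangle$ via the presentation (\ref{lapre}) by $\alpha\mapsto\alpha^v$, $\xi\mapsto\xi$, then kill the kernel using the uniqueness of the normal form $\left(\prod_n\beta_n^{i_n}\right)\xi^r$, observing that $uv\mid vi_n$ forces $u\mid i_n$. The wreath-product remark at the end is a pleasant alternative but not needed.
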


\begin{proof}
By (\ref{lapre}),
$$\langle \alpha,\xi \mid \alpha^{uv}, [\xi^m\alpha\xi^{-m}, \xi^n\alpha\xi^{-n}]\; (m,n \in \Z)\rangle$$
is a presentation  of $\L_{uv}$ and
$$\langle \alpha,\xi \mid \alpha^{u}, [\xi^m\alpha\xi^{-m}, \xi^n\alpha\xi^{-n}]\; (m,n \in \Z)\rangle$$
is a presentation  of $\L_{u}$.
Let $H = \langle \alpha^v,\xi \rangle \leq \L_{uv}$. We claim that $H = \langle \alpha^v,\xi\rangle \leq \L_{uv}$ is isomorphic to $\L_u$. 

Indeed, it follows from the provided presentations that
$$\alpha \mapsto \alpha^v,\quad \xi \mapsto \xi$$
defines a surjective homomorphism $\theta$ from $\L_u$ onto $H$. Suppose that $w = (\prod_{n \in \Z} \beta_n^{i_n})\xi^r \in \ker\theta$, where $(i_n)_n$ is a mapping from $\Z$ into $\Z_u$ with finite support and $r \in \Z$. Then $1 = w\theta = (\prod_{n \in \Z} \beta_n^{vi_n})\xi^r$ yields $r = 0$ and $uv \mid vi_n$ for every $n$. Thus $u \mid i_n$ for every $n$ and so $w = 1$. Therefore $\theta:\L_u \to H$ is an isomorphism.
\end{proof}

\begin{lemma}
\label{coprime}
Let $p_1,\ldots,p_t$ be distinct positive primes with $t \geq 2$. Let $c_1 \geq 2$ and $c_2,\ldots, c_t \geq 1$. Then ${\rm Stab}_{\L_{p_1^{c_1}}}(\xi)$ is a homomorphic image of
${\rm Stab}_{\L_{p_1^{c_1}\ldots p_t^{c_t}}}(\xi)$.
\end{lemma}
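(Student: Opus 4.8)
The plan is to realise $\stab_{\L_k}(\xi)$ as the group of invertible elements of a convolution monoid and then feed in the Chinese Remainder Theorem. Write $q=p_1^{c_1}$ and $M=p_2^{c_2}\cdots p_t^{c_t}$, so that $N:=p_1^{c_1}\cdots p_t^{c_t}=qM$ with $\gcd(q,M)=1$. As recorded just before Lemma~\ref{autst}, an endomorphism of $\L_k$ fixes $\xi$ if and only if it is of the form $\p_{(i_n)_n,(0)_n,1}$ for a unique finitely supported map $(i_n)_n\colon\Z\to\Z_k$, and by (\ref{autlg9}) the composition of two such endomorphisms corresponds to the convolution $(i_n)_n\ast(i'_n)_n=\bigl(\sum_{k\in\Z} i'_{n-k}i_k\bigr)_n$ (equivalently, to multiplication in $\Z_k[t,t^{-1}]$ after identifying $(i_n)_n$ with $\sum_n i_nt^n$). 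The identity is $(\varepsilon_n)_n$, and $\stab_{\L_k}(\xi)$ is precisely the group of invertible elements of this commutative monoid. Since $q\mid N$, reduction of coefficients modulo $q$ is defined, and I would take as candidate the map $\rho$ sending $\p_{(i_n)_n,(0)_n,1}\in\stab_{\L_N}(\xi)$ to $\p_{(\overline{i_n})_n,(0)_n,1}$, where $\overline{i_n}$ denotes $i_n$ read modulo $q$.

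Next I would check that $\rho$ is a well-defined homomorphism $\stab_{\L_N}(\xi)\to\stab_{\L_q}(\xi)$. If $\p_{(i_n)_n,(0)_n,1}\in\stab_{\L_N}(\xi)$, its inverse fixes $\xi$ (because $\xi=\xi\p\p\inv=\xi\p\inv$), hence equals some $\p_{(i'_n)_n,(0)_n,1}$, and by (\ref{autlg9}) we have $(i_n)_n\ast(i'_n)_n=(\varepsilon_n)_n$ in $\Z_N$; reducing modulo $q$ gives $(\overline{i_n})_n\ast(\overline{i'_n})_n=(\varepsilon_n)_n$ in $\Z_q$, so $\p_{(\overline{i_n})_n,(0)_n,1}$ is invertible in the monoid of $\xi$-fixing endomorphisms of $\L_q$, i.e.\ lies in $\stab_{\L_q}(\xi)$. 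That $\rho$ is a homomorphism is immediate from (\ref{autlg9}), since reduction modulo $q$ commutes with the convolution sums.

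The only delicate point is surjectivity, and this is where the coprimality of $q$ and $M$ is used. Given $\psi=\p_{(a_n)_n,(0)_n,1}\in\stab_{\L_q}(\xi)$ with inverse $\p_{(b_n)_n,(0)_n,1}$, so $(a_n)_n\ast(b_n)_n=(\varepsilon_n)_n$ in $\Z_q$, I would use the Chinese Remainder Theorem to choose, for each $n\in\Z$, elements $i_n,i'_n\in\Z_N$ with $i_n\equiv a_n$, $i'_n\equiv b_n\pmod q$ and $i_n\equiv i'_n\equiv\varepsilon_n\pmod M$; outside $\supp((a_n)_n)\cup\supp((b_n)_n)\cup\{0\}$ the prescribed value is forced to be $0$, so $(i_n)_n$ and $(i'_n)_n$ have finite support and define endomorphisms of $\L_N$ fixing $\xi$. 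Now $(i_n)_n\ast(i'_n)_n$ reduces modulo $q$ to $(a_n)_n\ast(b_n)_n=(\varepsilon_n)_n$ and modulo $M$ to $(\varepsilon_n)_n\ast(\varepsilon_n)_n=(\varepsilon_n)_n$, so by the Chinese Remainder Theorem it equals $(\varepsilon_n)_n$ in $\Z_N$; hence $\p_{(i'_n)_n,(0)_n,1}$ is a two‑sided inverse of $\p_{(i_n)_n,(0)_n,1}$, giving $\p_{(i_n)_n,(0)_n,1}\in\stab_{\L_N}(\xi)$, and $\p_{(i_n)_n,(0)_n,1}\rho=\p_{(a_n)_n,(0)_n,1}=\psi$. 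Therefore $\rho$ is a surjective homomorphism and $\stab_{\L_{p_1^{c_1}}}(\xi)$ is a homomorphic image of $\stab_{\L_{p_1^{c_1}\cdots p_t^{c_t}}}(\xi)$. (The same bookkeeping in fact yields $\stab_{\L_N}(\xi)\cong\prod_{i=1}^t\stab_{\L_{p_i^{c_i}}}(\xi)$, but only the projection onto the first factor is needed here.) The main obstacle is simply organising the lift in the last paragraph so that the lifted sequences remain finitely supported and their convolution is controlled modulo both $q$ and $M$; everything else is a direct translation of (\ref{autlg9}).
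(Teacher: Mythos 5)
Your proof is correct, and the route is genuinely different from the paper's. The paper realises $\L_{p_1^{c_1}}$ as the subgroup $H=\langle\alpha^v,\xi\rangle\leq\L_{p_1^{c_1}\cdots p_t^{c_t}}$ (Lemma \ref{embedding}) and takes $\omega:\psi\mapsto\psi|_H$; after the identification $H\cong\L_u$ this is the same coefficient-reduction map as your $\rho$, so the two proofs agree on which homomorphism is being used. Where they diverge is the surjectivity argument: the paper relies on the explicit generating set of ${\rm Stab}_{\L_{p^s}}(\xi)$ from Lemma \ref{staps} and hits each generator $\lambda$, $\eta_j$, $\delta_{m,p_1r}$ by an ad hoc number-theoretic adjustment (e.g.\ replacing $j$ by $j'=j+p_1^{c_1}v'$, and a B\'ezout identity for the $\delta$'s), whereas you identify the monoid of $\xi$-fixing endomorphisms with $(\Z_k[t,t^{-1}],\cdot)$ via (\ref{autlg9}), note that ${\rm Stab}_{\L_k}(\xi)$ is its unit group, and invoke the Chinese Remainder Theorem to split $\Z_N[t,t^{-1}]\cong\Z_q[t,t^{-1}]\times\Z_M[t,t^{-1}]$ and hence the unit groups. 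Your argument is shorter, independent of the classification in Lemma \ref{staps}, and yields the stronger conclusion ${\rm Stab}_{\L_N}(\xi)\cong\prod_i{\rm Stab}_{\L_{p_i^{c_i}}}(\xi)$; the paper's version has the minor advantage of exhibiting concrete preimages of the named generators, which it reuses later. The only point worth making explicit in your write-up is the (easy) observation that a two-sided inverse in the monoid of endomorphisms is automatically a group automorphism, so that ``unit of the convolution monoid'' really is equivalent to membership in ${\rm Stab}_{\L_k}(\xi)$; with that said, all steps check out, including the finite-support bookkeeping in the CRT lift.
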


\begin{proof}
Write $u =  p_1^{c_1}$, $v = p_2^{c_2}\ldots p_t^{c_t}$ and $k=uv$.
Let $H = \langle \alpha^v,\xi \rangle \leq \L_{uv}$. By Lemma \ref{embedding}, we have $H \cong \L_u$. 

We define
$$\begin{array}{rcl}
\omega:{\rm Stab}_{\L_{k}}(\xi)&\to&{\rm Stab}_{H}(\xi)\\
\psi&\mapsto&\psi|_H.
\end{array}$$
We show that $H\psi \subseteq H$ for each $\psi \in {\rm Stab}_{\L_{k}}(\xi)$. Indeed, we may assume that $\alpha\psi = \prod_{n \in \Z} \beta_n^{i_n}$ for some mapping $(i_n)_n$ from $\Z$ into $\Z_k$ with finite support. Since $\xi\psi = \xi$ and $\alpha^v\psi = \prod_{n \in \Z} \beta_n^{vi_n}$, we get $H\psi \subseteq H$. This implies that $\omega$ is a (group) homomorphism.

We show next that $\omega$ is surjective. In view of Lemma \ref{staps}, it suffices to show that
$$\{ \lambda, \eta_j, \delta_{m,pr} \mid j \in \Z^*_k,\, m \in \Z \setminus \{ 0\},\, r \in \Z_{p^s} \} \subseteq {\rm Im} \; \omega$$
(after translation from $\L_u$ to $H$).

Clearly, $\lambda$ in $H$ (defined by $\alpha^v \mapsto \xi\alpha^v\xi\inv$ and $\xi \mapsto \xi$) is the restriction of $\lambda$ from $\L_k$.

Consider next $\eta_j$ in $H$ (defined by $\alpha^v \mapsto (\alpha^v)^j$ and $\xi \mapsto \xi$, where $j \in \Z^*_u$).  
We can represent $j$ as an integer not divisible by $p_1$. Let $v'$ be the product of the primes $p_2,\ldots,p_t$ which do not divide $j$ and set $j' = j+p_1^{c_1}v'$. 

Since $j \in \Z^*_u$, then $p_1$ does not divide $j$ and so it does not divide $j'$. Suppose that $p_i$ divides $j'$ for some $2 \leq i \leq t$. If $p_i$ does not divide $j$, then it divides $v'$ and so it divides $j = j' - p_1^{c_1}v'$, a contradiction. If $p_i$ divides $j$, then it divides $p_1^{c_1}v' = j'-j$, and so must be a factor of $v'$, yet a contradiction. 

Thus we can consider $j' \in \Z^*_k$ and $\eta_{j'} \in \stab_{\L_{k}}(\xi)$. We claim that $\eta_{j'}\omega = \eta_j$. Since $\xi\eta_{j'} = \xi = \xi\eta_j$, it remains to be shown that $\alpha^v\eta_{j'} = \alpha^v\eta_j$. This is equivalent to show that $\alpha^{vj'} = \alpha^{vj}$ in $\L_k$, i.e. $k$ divides $vj'-vj$, i.e. $p_1^{c_1}$ divides $j'-j$, which is clearly true.

Before dealing with the remaining case, we remark that the following generalization of (\ref{staps4}) holds with the same proof
\beq
\label{coprime1}
\mbox{if $p = p_1\ldots p_t$ and $c = \max\{ c_1,\ldots,c_t \}$, then $\delta_{m,pr}^{p^c} = 1$.}
\eeq
Indeed, we adapt $\Phi_m$ and $\Lambda$ in the obvious way, and everything works the same.

Now consider $\delta_{m,p_1r}$ in $H$ (defined by $\alpha^v \mapsto \alpha^v\beta_m^{vp_1r}$ and $\xi \mapsto \xi$). Since $p_1^{c^1-1}$ and $p_2\ldots p_t$ are coprime, there exist $r',y \in \Z$ such that $p_2\ldots p_tr' + p_1^{c_1-1}y = r$. Writing $p = p_1\ldots p_t$, we consider now $\delta_{m,pr'}$, which is in $\stab _{\L_{k}}(\xi)$ by (\ref{coprime1}). We claim that $\delta_{m,pr'}\omega = \delta_{m,p_1r}$. Since $\xi\delta_{m,pr'} = \xi = \xi\delta_{m,p_1r}$, it remains to be shown that $\alpha^v\delta_{m,pr'} = \alpha^v\delta_{m,p_1r}$. This is equivalent to show that $\alpha^{v}\beta_m^{vpr'} = \alpha^{v}\beta_m^{vp_1r}$ in $\L_k$. So it suffices to show that $k$ divides $vpr' - vp_1r$, i.e. $p_1^{c_1-1}$ divides $p_2\ldots p_tr' -r$, which is certainly true. 
Therefore $\omega$ is surjective as claimed. 
\end{proof}

\begin{lemma}
\label{stdis}
Let $k = p_1\ldots p_s$, where $p_1,\ldots,p_s$ are distinct positive primes and $s \geq 2$. For $\ell = 1,\ldots,s$, let
$\rho_{\ell}$ be the endomorphism of $\L_k$ defined by $\alpha\rho_{\ell} = \alpha^{p_{\ell}}\beta_1^{k/p_{\ell}}$ and $\xi\rho_{\ell} = \xi$.
Then:
\bi
\item[(i)]
${\rm Stab}_{\L_{k}}(\xi) = \langle \eta_j, \rho_{\ell} \mid j \in \Z^*_k,\, \ell \in \{ 1,\ldots,s\} \rangle;$
\item[(ii)]
$\langle \lambda, \eta_j, \rho_{\ell} \mid j \in \Z^*_k,\, \ell \in \{ 1,\ldots,s-1\} \rangle$ is a finite index subgroup of ${\rm Stab}_{\L_{k}}(\xi)$.
\ei
\end{lemma}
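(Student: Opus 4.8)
The plan is to cut $\stab_{\L_k}(\xi)$ into its $s$ prime components and reduce everything to Lemma~\ref{stap}. Recall that every element of $\stab_{\L_k}(\xi)$ is $\p_{(i_n)_n,(0)_n,1}$ for a unique finitely supported $(i_n)_n\colon\Z\to\Z_k$, and that by~(\ref{autlg9}) composition of such endomorphisms is the convolution of the coefficient sequences modulo~$k$. For each $\ell\in\{1,\dots,s\}$ I would introduce $r_\ell\colon\stab_{\L_k}(\xi)\to\stab_{\L_{p_\ell}}(\xi)$, $\p_{(i_n)_n,(0)_n,1}\mapsto\p_{(i_n\bmod p_\ell)_n,(0)_n,1}$. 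It lands in $\aut(\L_{p_\ell})$: if $\p_{(i'_n)_n,(0)_n,1}$ inverts $\p_{(i_n)_n,(0)_n,1}$ then $\sum_{k}i'_{n-k}i_k=\varepsilon_n$ modulo~$k$, hence modulo~$p_\ell$, so $\p_{(i'_n\bmod p_\ell)_n,(0)_n,1}$ inverts $r_\ell(\psi)$; it fixes $\xi$ by construction; and it is a homomorphism since reduction modulo $p_\ell$ commutes with convolution (equivalently, this is the restriction-to-$\langle\alpha^{k/p_\ell},\xi\rangle$ map, cf.\ Lemma~\ref{coprime} and Lemma~\ref{embedding}). By Lemma~\ref{stap}, $\stab_{\L_{p_\ell}}(\xi)=\{\gamma_{m,j}\mid m\in\Z,\ j\in\Z_{p_\ell}^*\}$, and $\gamma_{m,j}\gamma_{m',j'}=\gamma_{m+m',jj'}$ by~(\ref{autlg11}), so $\pi_\ell\colon\gamma_{m,j}\mapsto m$ is a homomorphism with kernel $\{\gamma_{0,j}\}=\{\eta_j\mid j\in\Z_{p_\ell}^*\}$ (using~(\ref{autlg7})). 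Put $\pi=(\pi_1\circ r_1,\dots,\pi_s\circ r_s)\colon\stab_{\L_k}(\xi)\to\Z^s$.

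The calculations are routine. First, $\rho_\ell\in\stab_{\L_k}(\xi)$: since $\alpha\rho_\ell=\beta_0^{p_\ell}\beta_1^{k/p_\ell}$ and $p_\ell\cdot(k/p_\ell)=k\equiv0\pmod{k}$, a direct check with~(\ref{autlg11}) shows that the endomorphism $\alpha\mapsto(\beta_{-1}^{k/p_\ell}\beta_0^{p_\ell})^{c}$, where $c\in\Z_k^*$ inverts $p_\ell^2+(k/p_\ell)^2$ (a unit because it is a unit modulo every $p_i$), is a two-sided inverse of $\rho_\ell$. Second, $\pi(\eta_j)=0$ for $j\in\Z_k^*$; $\pi(\lambda)=(1,\dots,1)$, because $\alpha\lambda=\beta_1$ reduces to position~$1$ modulo every $p_i$; and $\pi(\rho_\ell)=e_\ell$, the $\ell$-th standard basis vector of $\Z^s$, because modulo $p_\ell$ the coefficients of $\rho_\ell$ reduce to $\beta_1^{(k/p_\ell)\bmod p_\ell}$ (position~$1$) while modulo $p_i$ with $i\ne\ell$ they reduce to $\beta_0^{p_\ell\bmod p_i}$ (position~$0$), using $p_i\mid k/p_\ell$. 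Third, $\ker\pi=\{\eta_j\mid j\in\Z_k^*\}$: if $\pi(\psi)=0$ then for each $\ell$ the coefficient sequence of $\psi$ vanishes modulo~$p_\ell$ away from $n=0$ and is a unit at $n=0$, so squarefreeness of $k$ forces $i_n=0$ for $n\ne0$ and $i_0\in\Z_k^*$, i.e.\ $\psi=\eta_{i_0}$.

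Part~(i) then follows: for $\psi\in\stab_{\L_k}(\xi)$ write $\pi(\psi)=(m_1,\dots,m_s)$; since $\pi$ is a homomorphism into the abelian group $\Z^s$ with $\pi(\rho_\ell)=e_\ell$, the element $\psi\rho_1^{-m_1}\cdots\rho_s^{-m_s}$ lies in $\ker\pi$, hence equals $\eta_j$ for some $j\in\Z_k^*$, and $\psi=\eta_j\rho_1^{m_1}\cdots\rho_s^{m_s}$. So $\stab_{\L_k}(\xi)=\langle\eta_j,\rho_\ell\mid j\in\Z_k^*,\ \ell\le s\rangle$; being generated by the finite set $\{\eta_j\}\cup\{\rho_\ell\}$ it is finitely generated (in contrast with Lemma~\ref{stanfg}), and one sees moreover that $r=(r_1,\dots,r_s)$ is an isomorphism onto $\prod_\ell(\Z\times\Z_{p_\ell}^*)$. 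For part~(ii): $\stab_{\L_k}(\xi)$ is abelian by~(\ref{autlg8}) and $\pi(\lambda)=(1,\dots,1)=\pi(\rho_1\cdots\rho_s)$, so $\lambda(\rho_1\cdots\rho_s)^{-1}\in\ker\pi$, i.e.\ equals $\eta_j$ for some $j\in\Z_k^*$; hence $\rho_s=\eta_j^{-1}\lambda\rho_1^{-1}\cdots\rho_{s-1}^{-1}\in\langle\lambda,\eta_j,\rho_1,\dots,\rho_{s-1}\rangle$, and with~(i) this gives $\langle\lambda,\eta_j,\rho_\ell\mid j\in\Z_k^*,\ \ell\le s-1\rangle=\stab_{\L_k}(\xi)$, which in particular has finite index in it.

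I expect the difficulty to be bookkeeping rather than ideas: the three identities for $\pi$ on $\eta_j$, $\lambda$ and $\rho_\ell$ — where the divisibility $p_i\mid k/p_\ell$ for $i\ne\ell$ (which is exactly where squarefreeness of $k$ enters) underlies $\pi(\rho_\ell)=e_\ell$ — together with the slightly fussy checks that $\rho_\ell$ is an automorphism and that each $r_\ell$ maps into $\aut(\L_{p_\ell})$. After that, (i) and (ii) are a one-line chase through $\pi$ and $\ker\pi$.
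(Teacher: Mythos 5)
Your proof is correct, and while it rests on the same underlying reduction as the paper's --- cut $\stab_{\L_k}(\xi)$ along the prime components and invoke Lemma \ref{stap}, your $r_\ell$ being exactly the paper's restriction of $\psi$ to $H_\ell=\langle\alpha^{k/p_\ell},\xi\rangle\cong\L_{p_\ell}$ --- the organization genuinely diverges. For (i) the paper runs an explicit elimination: it writes $\alpha^{k_\ell}\psi=\beta_{m_\ell}^{k_\ell j_\ell}$, recombines via a B\'ezout identity $k_1x_1+\dots+k_sx_s=1$, and then kills the factors one at a time by multiplying by $\rho_s^{-m_s},\dots,\rho_1^{-m_1}$, tracking coefficients through the power formula (\ref{aves}). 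You instead package the same information into the homomorphism $\pi\colon\stab_{\L_k}(\xi)\to\Z^s$ and compute $\ker\pi=\{\eta_j\mid j\in\Z_k^*\}$, after which (i) is a one-line coset argument; the price is the three generator computations $\pi(\eta_j)=0$, $\pi(\lambda)=(1,\dots,1)$, $\pi(\rho_\ell)=e_\ell$ (all of which check out, the last using squarefreeness via $p_i\mid k/p_\ell$ for $i\neq\ell$), and the reward is the structural statement $\stab_{\L_k}(\xi)\cong\Z^s\times\Z_k^*$ for free. For (ii) the two routes really differ: the paper proves $\rho_1^m\cdots\rho_s^m=\lambda^m$ for $m=(p_1-1)\cdots(p_s-1)$ via Fermat's little theorem and concludes index at most $m$, whereas your kernel argument gives $\rho_1\cdots\rho_s=\lambda\eta_j$ with $j=\sum_i(k/p_i)^2\in\Z_k^*$ (a unit since it is $\equiv(k/p_\ell)^2\not\equiv 0$ mod each $p_\ell$), hence $\rho_s\in\langle\lambda,\eta_j,\rho_1,\dots,\rho_{s-1}\rangle$ and the subgroup in (ii) is in fact \emph{all} of $\stab_{\L_k}(\xi)$ --- strictly stronger than ``finite index'' (e.g.\ for $k=15$ one gets $\rho_1\rho_2=\lambda\eta_4$). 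Fed into the proof of Theorem \ref{sfree}, this strengthening shows $\rho_s\in\Psi_kA_k$ unconditionally, so the supplementary hypothesis ``$p_i$ divides $(k/p_i)^2-1$'' there appears to be removable; this is worth flagging to the authors rather than a defect of your argument.
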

%\marginpar{\textbf{The proof never mentions $\lambda$ and indeed it doesn't seem to need it.
%Does this mean we can state this result without $\lambda$? If that is so, then in the proof of (i) $\implies$ (iii) of Theorem \ref{sfree}, it can be removed.
%This is strange as it would imply, in Theorem \ref{sfree} that $A_k = {\rm Stab}_{\L_{k}}(\xi)$.
%}}

\begin{proof}
(i) Let $\ell \in \{ 1,\ldots,s \}$. We must show that $\rho_{\ell}$ is an automorphism.

Write $k_{\ell} = \frac{k}{p_{\ell}}$. Since $p_{\ell}^2$ and $k_{\ell}^2$ are coprime, there exist $u_{\ell},v_{\ell} \in \Z$ such that $p_{\ell}^2u_{\ell} + k_{\ell}^2v_{\ell} = 1$. Let $\psi$ denote the endomorphism of $\L_k$ defined by $\alpha\psi = \alpha^{p_{\ell}u_{\ell}}\beta_{-1}^{k_{\ell}v_{\ell}}$ and $\xi\psi = \psi$. It follows from (\ref{autlg9}) that
$$\rho_{\ell}\psi = \p_{(i'_{0}i_{n} + i'_{-1}i_{n+1})_n,(0)_n,1}.$$
\bi
\item
If $n = -1$, then $i'_{0}i_{n} + i'_{-1}i_{n+1} = k_{\ell}v_{\ell}p_{\ell} = 0$ in $\Z_k$.
\item
If $n = 0$, then $i'_{0}i_{n} + i'_{-1}i_{n+1} = p_{\ell}u_{\ell}p_{\ell} + k_{\ell}v_{\ell}k_{\ell} = 1$.
\item
If $n = 1$, then $i'_{0}i_{n} + i'_{-1}i_{n+1} = p_{\ell}u_{\ell}k_{\ell} = 0$ in $\Z_k$.
\ei
Since $i'_{0}i_{n} + i'_{-1}i_{n+1} = 0$ for all the remaining values of $n$, we get $\rho_{\ell}\psi = 1$. In view of (\ref{autlg8}), $\rho_{\ell} \in {\rm Stab}_{\L_{k}}(\xi)$.

We show next that 
\beq
\label{stdis1}
\alpha\rho_{\ell}^m = \alpha^{p^m_{\ell}}\beta_m^{k^m_{\ell}} \mbox{ for every }m \geq 1.
\eeq

We use induction on $m$. The case $m = 1$ holds by definition. Assume that the claim holds for some $m \geq 1$. In view of (\ref{autlg11}), we get
$$\alpha\rho_{\ell}^{m+1} = \alpha^{p^{m+1}_{\ell}}\beta_m^{k^m_{\ell}p_{\ell}}\beta_1^{p^m_{\ell}k_{\ell}}\beta_{m+1}^{k^{m+1}_{\ell}} = \alpha^{p^{m+1}_{\ell}}\beta_{m+1}^{k^{m+1}_{\ell}}$$
where the last equality follows from the fact that $k \mid k^m_{\ell}p_{\ell}$ and $k \mid
p^m_{\ell}k_{\ell}$ and so $\beta_m^{k^m_{\ell}p_{\ell}}\beta_1^{p^m_{\ell}k_{\ell}}=1$ and (\ref{stdis1}) holds.

Similarly, we can show that
\beq
\label{stdis2}
\alpha\rho_{\ell}^{-m} = \alpha^{p^m_{\ell}u^m_{\ell}}\beta_{-m}^{k^m_{\ell}v^m_{\ell}} \mbox{ for every }m \geq 1.
\eeq

In view of (\ref{stdis1}), and since $k$ divides $k_ik_j$ whenever $i \neq j$, a straightforward induction shows that
\beq
\label{aves}
\alpha\rho_1^{m_1} \ldots \rho_r^{m_r} = \alpha^{p_1^{m_1}\ldots p_r^{m_r}}
\prod_{i=1}^r \beta_{m_i}^{k_i^{m_i}\prod_{\ell \in \{ 1,\ldots r\} \setminus \{ i\} } p_{\ell}^{m_{\ell}}}
\eeq
holds for all $r \in \{ 1,\ldots,s\}$ and $m_1,\ldots,m_r > 0$.

Now let $\psi \in {\rm Stab}_{\L_{k}}(\xi)$. We prove that, for each $\ell \in \{ 1,\ldots,s\}$,
\beq
\label{stdis3}
\alpha^{k_{\ell}}\psi = \beta_{m_{\ell}}^{k_{\ell}j_{\ell}}  \mbox{ for some $m_{\ell} \in \Z$ and }j_{\ell} \in \Z^*_{\ell}.
\eeq

Indeed, if $H = \langle \alpha^{k_{\ell}},\xi \rangle$, it follows from Lemma \ref{embedding} that $H \cong \L_{p_{\ell}}$. Since $\alpha^{k_{\ell}}\psi = (\alpha\psi)^{k_{\ell}}$, we get $H\psi \subseteq H$. Also $H\psi\inv \subseteq H$ out of symmetry, hence $H = H\psi\inv \psi \subseteq H\psi \subseteq H$, yielding $H\psi = H$. Hence $\psi|_H \in \stab_H(\xi)$. Since $H \cong \L_{p_{\ell}}$, then (\ref{stdis3}) follows from Lemma \ref{stap}.

Now the greatest common divisor of $k_1,\ldots,k_s$ is obviously 1, so there exist $x_1,\ldots,x_s \in \Z$ such that $k_1x_1 + \ldots + k_sx_s = 1$. It follows from (\ref{stdis3}) that
\beq
\label{empty-case-s}
\alpha\psi = \beta_{m_{1}}^{k_{1}j_{1}x_1}\ldots \beta_{m_{s}}^{k_{s}j_{s}x_s}.
\eeq
Note that the $m_{\ell}$ are not necessarily nonzero.
We show that, for every $\ell \in \{ 1,\ldots,s+1\}$
\beq
\label{stdis4}
\mbox{$\alpha\psi\rho_s^{-m_s}\ldots \rho_{\ell}^{-m_{\ell}} = \beta_{m_1}^{k_1y_1} \ldots \beta_{m_{\ell -1}}^{k_{\ell-1}y_{\ell-1}} \alpha^{k_{\ell}y_{\ell}}\ldots \alpha^{k_{s}y_{s}}$ for some $y_1,\ldots,y_s \in \Z$}.
\eeq

The formula for $\ell = s+1$ is to be interpreted as computing $\alpha \psi$ and so it is trivially
true because of (\ref{empty-case-s}). Assume equation (\ref{stdis4}) holds for $\ell \in \{ 2,\ldots,s+1\}$. If $m_{\ell-1} \neq 0$, it follows from (\ref{stdis1}) or (\ref{stdis2}) that there exist some $z,w \in \Z$ such that 

\beq
\label{auxiliary}
\alpha\rho_{\ell-1}^{-m_{\ell-1}} = \alpha^{p_{\ell-1}z}\beta_{-m_{\ell-1}}^{k_{\ell-1}w}.
\eeq

This is in fact also true if $m_{\ell-1} = 0$ because $p_{\ell-1}$ and $k_{\ell-1}$ are coprime. We apply now (\ref{autlg11}) to the automorphisms $\psi\rho_s^{-m_s}\ldots \rho_{\ell}^{-m_{\ell}}$ and $\rho_{\ell-1}^{-m_{\ell-1}}$ and using (\ref{stdis4}) and (\ref{auxiliary}). Since $k = p_{\ell-1}k_{\ell-1}$ and it divides the product of any two distinct $k_i$, we get
$$\alpha\psi\rho_s^{-m_s}\ldots \rho_{\ell-1}^{-m_{\ell-1}} = \beta_{m_1}^{k_1y_1p_{\ell-1}z} \ldots \beta_{m_{\ell -2}}^{k_{\ell-2}y_{\ell-2}p_{\ell-1}z} \alpha^{k_{\ell-1}^2y_{\ell-1}w}\alpha^{k_{\ell}y_{\ell}p_{\ell-1}z}\ldots \alpha^{k_{s}y_{s}p_{\ell-1}z},$$
proving (\ref{stdis4}). In particular, for $\ell = 1$, we get 
$$\alpha\psi\rho_s^{-m_s}\ldots \rho_{1}^{-m_{1}} = \alpha^j$$ for some $j \in \Z_k$. Since automorphisms preserve order, we must have $j \in \Z^*_k$, hence $\alpha\psi\rho_s^{-m_s}\ldots \rho_{1}^{-m_{1}} = \alpha\eta_j$, yielding 
$$\psi = \eta_j\rho_1^{m_1}\ldots \rho_{s}^{m_{s}}$$
and we are done.

(ii) We show that 
\beq
\label{aves2}
\rho_1^m\ldots \rho_{s}^m = \lambda^m
\eeq
for $m = (p_1-1)\ldots(p_s-1)$. 

Indeed, it follows from (\ref{aves}) that 
$$\alpha\rho_1^{m} \ldots \rho_s^{m_s} = \alpha^{k^m}
\prod_{i=1}^s \beta_{m}^{k_i^{2m}} = \beta_m^{\sum_{i=1}^s k_i^{2m}}.$$
Thus it suffices to show that $k$ divides $\sum_{i=1}^s k_i^{2m} -1$. This is equivalent to have $p_i$ dividing $k_i^{2m} -1$ for $i = 1,\ldots,s$. Since $p_i$ does not divide $k_i$, we have $k_i^{p_i-1} \equiv 1\, ({\rm mod}\, p_i)$, hence $k_i^{2m} \equiv 1\, ({\rm mod}\, p_i)$. Therefore (\ref{aves2}) holds.

Since ${\rm Stab}_{\L_{k}}(\xi)$ is abelian, it follows from part (i) and (\ref{aves2}) that
$${\rm Stab}_{\L_{k}}(\xi) = \bigcup_{i=0}^{m-1} \langle \lambda, \eta_j, \rho_{\ell} \mid j \in \Z^*_k,\, \ell \in \{ 1,\ldots,s-1\} \rangle \rho_s^i.$$
\end{proof}

%It follows in particular that $\lambda \in \langle \eta_j, \rho_{\ell} \mid j \in \Z^*_k,\, \ell \in \{ 1,\ldots,s\} \rangle$ when $k$ is a product of at least two distinct primes.

%Recall that $\lambda^r = \p_{(\varepsilon_{n-r})_n,(0)_n,1}$ for every $r \in \Z$. 
In view of Lemma \ref{buildaut}, we can define $\Psi_k$ to be the set of all automorphisms of $\L_k$ of the form $\p_{(\varepsilon_{n-r})_n,(j_n)_n,1}$, where $r \in \Z$ and $(j_n)_n$ is a mapping from $\Z$ to $\Z_r$. 

\begin{lemma}
\label{psik}
For every $k \geq 2$, $\Psi_k \unlhd {\rm Aut}(\L_k)$ and $\Psi_k \cong \L_k$.
\end{lemma}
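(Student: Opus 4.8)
The plan is to prove the three assertions in order: $\Psi_k$ is a subgroup, $\Psi_k$ is normal, and $\Psi_k\cong\L_k$. Throughout write $\lambda\in\stab_{\L_k}(\xi)$ for the inner automorphism $x\mapsto\xi x\xi\inv$, so that $\lambda=\p_{(\varepsilon_{n-1})_n,(0)_n,1}$ and $\alpha\lambda^{r}=\beta_r$ for all $r\in\Z$; in particular $\lambda$ has infinite order, so $\langle\lambda\rangle=\{\lambda^{r}\mid r\in\Z\}$. First, by Lemma \ref{buildaut} every $\p_{(\varepsilon_{n-r})_n,(j_n)_n,1}$ is an automorphism (since $\p_{(\varepsilon_{n-r})_n,(0)_n,1}=\lambda^{r}\in\stab_{\L_k}(\xi)$), so $\Psi_k$ is the full family of such maps and, as the third index is $1$, $\Psi_k\subseteq\aut_+(\L_k)$. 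For the homomorphism $\sigma:\aut_+(\L_k)\to\stab_{\L_k}(\xi)$ of Lemma \ref{autst} we have $\sigma(\p_{(\varepsilon_{n-r})_n,(j_n)_n,1})=\p_{(\varepsilon_{n-r})_n,(0)_n,1}=\lambda^{r}$, hence $\Psi_k=\sigma\inv(\langle\lambda\rangle)$. Being the preimage of a subgroup under a homomorphism, $\Psi_k$ is a subgroup of $\aut_+(\L_k)$, hence of $\aut(\L_k)$.

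\emph{Normality.} It suffices to show that the normalizer $N_{\aut(\L_k)}(\Psi_k)$ contains $\aut_+(\L_k)$ and $\zeta$, since $\aut(\L_k)=\aut_+(\L_k)\cup\aut_+(\L_k)\zeta$ by (\ref{sd}). For $\theta\in\aut_+(\L_k)$ and $\p\in\Psi_k$, the element $\theta\inv\p\theta$ lies in $\aut_+(\L_k)$ and, applying the homomorphism $\sigma$ together with the fact that $\stab_{\L_k}(\xi)$ is abelian by (\ref{autlg8}), $\sigma(\theta\inv\p\theta)=\sigma(\theta)\inv\sigma(\p)\sigma(\theta)=\sigma(\p)\in\langle\lambda\rangle$; thus $\theta\inv\p\theta\in\sigma\inv(\langle\lambda\rangle)=\Psi_k$. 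For $\zeta$ (which satisfies $\alpha\zeta=\alpha$, $\xi\zeta=\xi\inv$, $\zeta^{2}=1$ and $\beta_n\zeta=\beta_{-n}$), a direct computation with $\p=\p_{(\varepsilon_{n-r})_n,(j_n)_n,1}$ gives $\alpha(\zeta\p\zeta)=\beta_{-r}$ and $\xi(\zeta\p\zeta)=u\xi$ for a suitable $u\in\fin(\L_k)$, so $\zeta\p\zeta=\p_{(\varepsilon_{n+r})_n,(\cdot)_n,1}\in\Psi_k$. Hence $\Psi_k\unlhd\aut(\L_k)$.

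\emph{Isomorphism with $\L_k$.} Set $\Psi_k^{0}=\ker(\sigma|_{\Psi_k})=\{\p\in\Psi_k\mid\alpha\p=\alpha\}$, a normal subgroup of $\Psi_k$. Any $\p\in\Psi_k^{0}$ fixes $\alpha=\beta_0$, hence fixes every $\beta_n$ (conjugation inside the abelian group $\fin(\L_k)$ being trivial), and one checks that $\p\mapsto u$, where $\xi\p=u\xi$ with $u\in\fin(\L_k)$, is a group isomorphism $\Psi_k^{0}\xrightarrow{\ \sim\ }\fin(\L_k)$ (surjectivity follows from Lemma \ref{buildaut}, since $\p_{(\varepsilon_n)_n,(0)_n,1}=\mathrm{id}\in\stab_{\L_k}(\xi)$). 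Moreover $\Psi_k^{0}\cap\langle\lambda\rangle=1$, and replacing $\p$ by $\p\lambda^{-r}$ when $\alpha\p=\beta_r$ shows $\Psi_k=\Psi_k^{0}\langle\lambda\rangle$; thus $\Psi_k=\Psi_k^{0}\rtimes\langle\lambda\rangle$. A short computation shows that conjugation by $\lambda$ on $\Psi_k^{0}$ corresponds, under the identification $\Psi_k^{0}\cong\fin(\L_k)$, to the index shift $\beta_n\mapsto\beta_{n-1}$, i.e. to the action of $\xi\inv$ on $\fin(\L_k)$. Since $\fin(\L_k)$ is the direct sum of copies of $\Z_k$ indexed by $\Z$ on which $\xi$ acts by the index shift, we conclude $\Psi_k\cong\bigl(\bigoplus_{\Z}\Z_k\bigr)\rtimes\Z\cong\Z_k\wr\Z=\L_k$ (inverting the generator of $\Z$ if one wishes to match signs). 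Equivalently, one verifies that $\alpha\mapsto\tau$, $\xi\mapsto\lambda$ — where $\tau\in\Psi_k$ is defined by $\alpha\tau=\alpha$, $\xi\tau=\alpha\xi$ — respects the presentation (\ref{lapre}) of $\L_k$ (one has $\tau^{k}=1$, and the conjugates $\lambda^{m}\tau\lambda^{-m}$ pairwise commute) and, using the normal form of $\L_k$, induces an isomorphism onto $\langle\tau,\lambda\rangle=\Psi_k$.

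The step most likely to need care is the normality of $\Psi_k$ against the negative automorphisms: conjugation by $\aut_+(\L_k)$ is handled painlessly through $\sigma$, but conjugation by $\zeta$ requires the explicit computation using the index reflection $\beta_n\mapsto\beta_{-n}$; likewise, identifying $\Psi_k^{0}$ with $\fin(\L_k)$ and pinning down the induced $\lambda$-action is routine but must be carried out attentively.
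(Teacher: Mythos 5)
Your proof is correct, but it is organized quite differently from the paper's. The paper proceeds by writing down an explicit bijection $\mu:\L_k\to\Psi_k$ on normal forms and verifying directly that it is a homomorphism — this single computation yields both that $\Psi_k$ is a subgroup and that $\Psi_k\cong\L_k$ — and then proves normality by characterizing $\Psi_k$ as the set of positive automorphisms sending $\alpha$ to a conjugate of $\alpha$ (see (\ref{conju2})), a description that is manifestly stable under conjugation by \emph{any} automorphism, so positive and negative automorphisms are handled uniformly. You instead observe that $\Psi_k=\sigma\inv(\langle\lambda\rangle)$ for the homomorphism $\sigma$ of Lemma \ref{autst}; since the target ${\rm Stab}_{\L_k}(\xi)$ is abelian by (\ref{autlg8}), this gives the subgroup property and normality under ${\rm Aut}_+(\L_k)$ essentially for free, at the price of a separate (but correct) computation for conjugation by $\zeta$ using $\beta_n\zeta=\beta_{-n}$. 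For the isomorphism you assemble $\Psi_k$ as the split extension $\Psi_k^0\rtimes\langle\lambda\rangle$ with $\Psi_k^0=\ker(\sigma|_{\Psi_k})\cong\fin(\L_k)$ and identify the $\lambda$-action with the shift, invoking the wreath-product structure of $\L_k$, rather than exhibiting the isomorphism directly; your parenthetical alternative ($\alpha\mapsto\iota$, $\xi\mapsto\lambda$) is in fact the inverse of the paper's $\mu$ up to a sign on the $\Z$-generator, which you correctly flag as harmless. Each route has its advantages: yours minimizes computation by exploiting $\sigma$ and abstract structure theory; the paper's explicit $\mu$ and the conjugacy characterization are reused later (e.g.\ $\Psi_k=\langle\iota,\lambda\rangle$ in Lemma \ref{gene} and the identity (\ref{tail})), so the extra computation is amortized.
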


\begin{proof}
Every element $x \in \L_k$ can be uniquely written in the form
%\beq
%\label{sdp1}
$$x = \xi^r \left(\prod_{n\in \Z} \beta_{n}^{j_n}\right),$$
%\eeq
where $r \in \Z$ and $(j_n)_n$ is a mapping from $\Z$ to $\Z_r$. 

Let $\mu:\L_k \to \Psi_k$ be defined by
%\beq
%\label{sdp2}
$$\left(\xi^r \left(\prod_{n\in \Z} \beta_{n}^{j_n} \right) \right)\mu = \p_{(\varepsilon_{n+r})_n,(j_n)_n,1}.$$
%\eeq
It is clear that $\mu$ is a bijection. 

Assume now that $x = \xi^r \left(\prod_{n\in \Z} \beta_{n}^{j_n} \right)$ and
$x' = \xi^{r'}\left(\prod_{n\in \Z} \beta_{n}^{j'_n}\right)$ are arbitrary elements of $\L_k$.
We have
$$xx' = \xi^r\left(\prod_{n\in \Z} \beta_{n}^{j_n}\right)\xi^{r'}\left(\prod_{n\in \Z} \beta_{n}^{j'_n}\right) = \xi^{r+r'}\left(\prod_{n\in \Z} \beta_{n-r'}^{j_n}\right)\left(\prod_{n\in \Z} \beta_{n}^{j'_n}\right) = \xi^{r+r'}\left(\prod_{n\in \Z} \beta_{n}^{j_{n+r'}+j'_n}\right),$$
hence
$$(xx')\mu = \left(\xi^{r+r'}\left(\prod_{n\in \Z} \beta_{n}^{j_{n+r'}+j'_n}\right)\right)\mu =
\p_{(\varepsilon_{n+r+r'})_n,(j_{n+r'}+j'_n)_n,1}.$$
On the other hand, using (\ref{autlg3}), we have
$$\begin{array}{lll}
\alpha(x\mu)(x'\mu)&=&\alpha\p_{(\varepsilon_{n+r})_n,(j_n)_n,1}\p_{(\varepsilon_{n+r'})_n,(j'_n)_n,1} = \beta_{-r}\p_{(\varepsilon_{n+r'})_n,(j'_n)_n,1} = 
\prod_{n \in \Z} \beta_{n}^{\varepsilon_{n+r'+r}}\\
&=&\alpha\p_{(\varepsilon_{n+r+r'})_n,(j_{n+r'}+j'_n)_n,1} = \alpha((xx')\mu),\\
\xi(x\mu)(x'\mu)&=&\xi\p_{(\varepsilon_{n+r})_n,(j_n)_n,1}\p_{(\varepsilon_{n+r'})_n,(j'_n)_n,1} = \left(\left(\prod_{n\in \Z} \beta_n^{j_n}\right)\xi \right)\p_{(\varepsilon_{n+r'})_n,(j'_n)_n,1}\\
&=&\left(\prod_{n\in\Z}\prod_{m\in \Z}\beta_m^{\varepsilon_{m+r'-n}j_n}\right)\left(\prod_{n\in \Z} \beta_n^{j'_n}\right)\xi = \left(\prod_{n\in\Z}\beta_{n-r'}^{j_n}\right)\left(\prod_{n\in \Z} \beta_n^{j'_n}\right)\xi\\
&=&\left(\prod_{n\in\Z}\beta_{n}^{j_{n+r'}+j'_n}\right)\xi
= \xi\p_{(\varepsilon_{n+r+r'})_n,(j_{n+r'}+j'_n)_n,1} = \xi((xx')\mu)
\end{array}$$
and so $(x\mu)(x'\mu) = (xx')\mu$. Therefore $\mu$ is a group isomorphism and $\Psi_k$ is obviously a subgroup of $\aut(\L_k)$. 

It remains to show that $\Psi_k$ is normal. First we note that, for every $x \in \L_k$,
\beq
\label{conju}
\mbox{$x$ and $\alpha$ are conjugate if and only if $x = \beta_r$ for some $r \in \Z$.}
\eeq
Indeed, suppose that $x = y\alpha y\inv$ for some $y \in \L_k$. We may write $y = u\xi^r$ for some $u \in \fin(\L_k)$ and $r \in \Z$. Since $\fin(\L_k)$ is abelian, it follows that
$$x = y\alpha y\inv = u\xi^r\alpha\xi^{-r}u\inv = \xi^r\alpha\xi^{-r} = \beta_r.$$
The converse implication holds trivially. Since $\alpha\p_{(\varepsilon_{n-r})_n,(j_n)_n,1}=\beta_r$, it follows that
\beq
\label{conju2}
\Psi_k = \{ \psi \in \aut_+(\L_k) \mid \alpha\psi \mbox{ and $\alpha$ are conjugate in }\L_k \}.
\eeq
Let $\p \in \aut(\L_k)$ and $\psi \in \Psi_k$. Clearly, $\p\psi\p\inv \in \aut_+(\L_k)$. If $\psi = \p_{(\varepsilon_{n-r})_n,(j_n),1}$, it follows from (\ref{autlg3}) that
\beq
\label{tail}
\beta_m\psi = \prod_{n \in \Z} \beta_{n}^{\varepsilon_{n-r-m}} = \beta_{r+m} = \xi^r\beta_m\xi^{-r}
\eeq
holds for every $m \in \Z$, hence $u\psi = \xi^ru\xi^{-r}$ for every $u \in \fin(\L_k)$. In particular, for $u = \alpha\p$, we get
$$\alpha\p\psi\p\inv = (\xi^ru\xi^{-r})\p\inv = (\xi^r\p\inv)\alpha(\xi^r\p\inv)\inv.$$
Thus $\p\psi\p\inv \in \Psi_k$ by (\ref{conju2}) and so $\Psi_k \unlhd {\rm Aut}(\L_k)$.
\end{proof}

We now aim at producing a generating set for $\aut(\L_k)$. Let $\iota = \p_{(\varepsilon_n)_n,(\varepsilon_n)_n,1}$. 

\begin{lemma}
\label{gene}
For every $k \geq 2$, 
\bi
\item[(i)] ${\rm Aut}_+(\L_k) = \langle {\rm Stab}_{\L_k}(\xi) \cup \{ \iota \} \rangle$;
\item[(ii)] ${\rm Aut}(\L_k) = \langle {\rm Stab}_{\L_k}(\xi) \cup \{ \iota, \zeta \} \rangle$.
\ei
\end{lemma}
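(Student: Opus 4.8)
The plan is to obtain Lemma~\ref{gene} as a fairly quick consequence of Lemmas~\ref{psik}, \ref{autst} and~\ref{buildaut}. For part~(i), the inclusion $\langle \stab_{\L_k}(\xi) \cup \{ \iota \} \rangle \subseteq \aut_+(\L_k)$ is immediate: $\iota = \p_{(\varepsilon_n)_n,(\varepsilon_n)_n,1}$ has last parameter $1$, hence lies in $\aut_+(\L_k)$, and we already know $\stab_{\L_k}(\xi) \subseteq \aut_+(\L_k)$. For the reverse inclusion the key point is that the normal subgroup $\Psi_k$ is itself contained in $\langle \stab_{\L_k}(\xi) \cup \{ \iota \} \rangle$. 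I would prove this by revisiting the isomorphism $\mu \colon \L_k \to \Psi_k$ from the proof of Lemma~\ref{psik}: writing $\alpha = \xi^0\beta_0$ and $\xi = \xi^1$ and feeding these into the defining formula for $\mu$ gives $\alpha\mu = \p_{(\varepsilon_n)_n,(\varepsilon_n)_n,1} = \iota$ and $\xi\mu = \p_{(\varepsilon_{n+1})_n,(0)_n,1}$; checking the latter on the generators $\alpha,\xi$ shows $\alpha\,(\xi\mu) = \beta_{-1}$ and $\xi\,(\xi\mu) = \xi$, that is, $\xi\mu = \lambda^{-1}$. Since $\mu$ is an isomorphism and $\L_k = \langle \alpha,\xi \rangle$, we conclude $\Psi_k = \langle \iota, \lambda^{-1} \rangle = \langle \iota,\lambda \rangle$, and as $\lambda \in \stab_{\L_k}(\xi)$ this yields $\Psi_k \subseteq \langle \stab_{\L_k}(\xi) \cup \{ \iota \} \rangle$.

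With that in hand, let $\p = \p_{(i_n)_n,(j_n)_n,1}$ be an arbitrary element of $\aut_+(\L_k)$ and set $\psi = \p_{(i_n)_n,(0)_n,1}$. By Lemma~\ref{buildaut} (equivalently, Lemma~\ref{autst}), $\psi \in \stab_{\L_k}(\xi)$, and from the definition of the homomorphism $\sigma$ of Lemma~\ref{autst} we have $\p\sigma = \psi$, while $\sigma$ fixes $\psi$ and hence also $\psi^{-1}$; therefore $(\p\psi^{-1})\sigma = 1$. By uniqueness of the normal form, $\ker\sigma$ consists exactly of the automorphisms $\p_{(\varepsilon_n)_n,(j_n)_n,1}$, and all of these lie in $\Psi_k$ (take $r = 0$ in the description of $\Psi_k$). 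Thus $\p\psi^{-1} \in \Psi_k \subseteq \langle \stab_{\L_k}(\xi) \cup \{ \iota \} \rangle$, and so $\p = (\p\psi^{-1})\psi \in \langle \stab_{\L_k}(\xi) \cup \{ \iota \} \rangle$, proving~(i).

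For part~(ii), recall that $\aut_+(\L_k)$ has index $2$ in $\aut(\L_k)$ and that $\zeta \in \aut_-(\L_k)$, so $\zeta \notin \aut_+(\L_k)$ and $\aut(\L_k) = \aut_+(\L_k) \cup \aut_+(\L_k)\zeta$. Substituting the description of $\aut_+(\L_k)$ from~(i) gives $\aut(\L_k) = \langle \stab_{\L_k}(\xi) \cup \{ \iota, \zeta \} \rangle$, the reverse inclusion being clear since $\stab_{\L_k}(\xi)$, $\iota$ and $\zeta$ all lie in $\aut(\L_k)$. The only thing requiring any care in the whole argument is the normal-form bookkeeping that identifies $\xi\mu$ with $\lambda^{-1}$ and that places $\ker\sigma$ inside $\Psi_k$; neither step presents a genuine difficulty, so Lemma~\ref{gene} is really a corollary of the earlier structural results.
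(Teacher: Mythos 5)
Your proof is correct and follows essentially the same route as the paper: both arguments reduce to the factorization $\aut_+(\L_k) = \Psi_k\cdot\stab_{\L_k}(\xi)$ together with the identity $\Psi_k = \langle \alpha\mu,\xi\mu\rangle = \langle \iota,\lambda\rangle$, and then dispatch (ii) via the index-$2$ observation. The only cosmetic difference is that you realize the factorization through the retraction $\sigma$ and its kernel, whereas the paper multiplies $\p_{(i_n)_n,(j_n)_n,1}$ by the explicit element $\p_{(\varepsilon_n)_n,(-j_n)_n,1}\in\Psi_k$ to land in $\stab_{\L_k}(\xi)$.
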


\begin{proof}
(i) Write $S_k = {\rm Stab}_{\L_k}(\xi)$ and let $\p = \p_{(i_n)_n,(j_n)_n,1} \in \aut_+(\L_k)$. By Lemma \ref{psik}, we can take $\psi = \p_{(\varepsilon_n)_n,(-j_n)_n,1} \in \Psi_k \leq \aut_+(\L_k)$. By (\ref{autlg3}) we have
\beq
\label{xifixed}
\xi \p \psi  %\left( \left( \prod_{m \in \mathbb{Z}} \beta_m^{j_m} \right) \xi \right)\psi
=\left( \left( \prod_{m \in \mathbb{Z}} (\beta_m \psi)^{j_m} \right) \xi \right) \psi =
\left( \left( \prod_{m \in \mathbb{Z}} \beta_m^{j_m} \right) \xi \right) \psi =
 \left( \prod_{m \in \mathbb{Z}} \beta_m^{j_m} \right) \left( \prod_{m \in \mathbb{Z}} \beta_m^{-j_m} \right) \xi=\xi
\eeq
and so we get $\p\psi \in S_k$, hence $\p \in \Psi_kS_k$. Since $\L_k = \langle \alpha,\xi\rangle$, it follows from Lemma \ref{psik} that 
\beq
\label{gene2}
\Psi_k = \langle \alpha\mu,\xi\mu\rangle = \langle \iota,\lambda \rangle.
\eeq
Since $\lambda \in S_k$, we get ${\rm Aut}_+(\L_k) = \langle S_k \cup \{ \iota \} \rangle$.

(ii) follows from the fact that $\aut_+(\L_k)$ is a subgroup of index 2 of $\aut_(\L_k)$ and $\zeta \in \aut_-(\L_k)$.
\end{proof}

Let ${\rm FStab}_{\L_k}(\xi)$ denote the set of all finite order automorphisms of $\L_k$ fixing $\xi$. Since ${\rm Stab}_{\L_k}(\xi)$ is abelian by 
(\ref{autlg8}), then ${\rm FStab}_{\L_k}(\xi)$ is an abelian group itself.

%\begin{theorem}
%\label{sdp}
%Let $k \geq 2$. Then ${\rm Aut}(\L_k) \cong ({\rm FStab}_{\L_k}(\xi) \rtimes \L_k) \rtimes C_2$.
%\end{theorem}
%
%\proof
%Write $S_k = {\rm Stab}_{\L_k}(\xi)$ and $F_k = {\rm FStab}_{\L_k}(\xi)$. By the proof of Lemma \ref{gene}(i), we have $\aut_+(\L_k) = \Psi_kS_k$. 

We say that $k \geq 2$ is {\em squarefree} if there exists no prime $p \in \N$ such that $p^2$ divides $k$.

\begin{theorem}
\label{sfree}
Let $k \geq 2$ have positive prime divisors $p_1,\ldots,p_s$. Then the following conditions are equivalent:
\bi
\item[(i)]
$k$ is squarefree;
\item[(ii)]
${\rm Aut}(\L_k)$ is finitely generated;
\item[(iii)]
${\rm Aut}_+(\L_k)$ is a finite extension of $\L_k \rtimes (\Z^{s-1} \times \Z_k^*)$.
\ei
If in addition $p_i$ divides $(\frac{k}{p_i})^2 -1$ for $i = 1,\ldots,s$, then
$${\rm Aut}_+(\L_k) \cong \L_k \rtimes (\Z^{s-1} \times \Z_k^*) \; \mbox{ and }\; 
{\rm Aut}(\L_k) \cong (\L_k \rtimes (\Z^{s-1} \times \Z_k^*)) \rtimes C_2.$$
\end{theorem}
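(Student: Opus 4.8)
The plan is to establish the cycle of implications $(ii)\Rightarrow(i)\Rightarrow(iii)\Rightarrow(ii)$ and then refine $(i)\Rightarrow(iii)$ under the extra divisibility hypothesis. Throughout I write $S=\stab_{\L_k}(\xi)$, and I recall from the proof of Lemma~\ref{gene}(i) that $\aut_+(\L_k)=\Psi_k S$, where $\Psi_k\cong\L_k$ is normal in $\aut(\L_k)$ by Lemma~\ref{psik}. Comparing normal forms ($\Psi_k$ consists of the maps $\p_{(\varepsilon_{n-r})_n,(j_n)_n,1}$ and $S$ of the maps $\p_{(i_n)_n,(0)_n,1}$) one sees at once that $\Psi_k\cap S=\langle\lambda\rangle$, so $\aut_+(\L_k)/\Psi_k\cong S/\langle\lambda\rangle$, which is abelian by~(\ref{autlg8}). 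These facts will be used repeatedly.

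For $(ii)\Rightarrow(i)$ I argue by contraposition. If $k$ is not squarefree, choose a prime $p_1$ with $p_1^2\mid k$ and write $k=p_1^{c_1}\cdots p_t^{c_t}$ with the $p_i$ distinct and $c_1\geq2$. If $t=1$ then $S=\stab_{\L_{p_1^{c_1}}}(\xi)$ is not finitely generated by Lemma~\ref{stanfg}; if $t\geq2$, Lemma~\ref{coprime} realizes $\stab_{\L_{p_1^{c_1}}}(\xi)$ as a homomorphic image of $S$, so $S$ again fails to be finitely generated (a quotient of a finitely generated group is finitely generated, and $\stab_{\L_{p_1^{c_1}}}(\xi)$ is not, by Lemma~\ref{stanfg}). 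Since $\sigma\colon\aut_+(\L_k)\to S$ is onto (Lemma~\ref{autst}), $\aut_+(\L_k)$ is not finitely generated, and as it has index $2$ in $\aut(\L_k)$, neither is $\aut(\L_k)$.

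For $(i)\Rightarrow(iii)$, assume $k=p_1\cdots p_s$ is squarefree, put $k_\ell=k/p_\ell$ and $E=\langle\rho_1,\dots,\rho_{s-1},\eta_j\mid j\in\Z_k^*\rangle\le S$ (with $E=\langle\eta_j\mid j\in\Z_k^*\rangle$ when $s=1$). The key device is, for each $\ell$, the homomorphism $m_\ell\colon S\to\Z$ obtained by restricting $\psi\in S$ to $H_\ell=\langle\alpha^{k_\ell},\xi\rangle\cong\L_{p_\ell}$ (Lemma~\ref{embedding}; one checks $H_\ell\psi=H_\ell$ exactly as in the proofs of Lemmas~\ref{coprime} and~\ref{stdis}), identifying $\stab_{H_\ell}(\xi)\cong\Z\times\Z_{p_\ell}^*$ by Lemma~\ref{stap}, and keeping the $\Z$-component; concretely $\alpha^{k_\ell}\psi=\beta_{m_\ell(\psi)}^{k_\ell j_\ell(\psi)}$. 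A short computation on generators (using $\alpha^k=1$ and $k\mid k_ik_\ell$ for $i\neq\ell$) gives $m_\ell(\rho_\ell)=1$, $m_\ell(\rho_i)=0$ for $i\neq\ell$, $m_\ell(\eta_j)=0$, and $m_\ell(\lambda)=1$. Consequently, applying $\mu=(m_1,\dots,m_s)$ to any equation $\rho_1^{a_1}\cdots\rho_{s-1}^{a_{s-1}}\eta_j=\lambda^t$ forces $t=0$ from the $s$-th coordinate and then $a_1=\dots=a_{s-1}=0$ from the first $s-1$; this shows simultaneously that $E\cong\Z^{s-1}\times\Z_k^*$ and that $\Psi_k\cap E=\langle\lambda\rangle\cap E=\{1\}$, whence $\Psi_kE\cong\L_k\rtimes(\Z^{s-1}\times\Z_k^*)$. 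This subgroup is normal in $\aut_+(\L_k)$ because $\aut_+(\L_k)/\Psi_k$ is abelian, and it is of finite index because $\langle\lambda\rangle E=\langle\lambda,\eta_j,\rho_1,\dots,\rho_{s-1}\rangle$ has finite index in $S$ by Lemma~\ref{stdis}(ii) (trivially so when $s=1$, by Lemma~\ref{stap}). Thus $(iii)$ holds.

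Finally, $(iii)\Rightarrow(ii)$ is immediate: $\L_k\rtimes(\Z^{s-1}\times\Z_k^*)$ is finitely generated, a finite extension of a finitely generated group is finitely generated, and $\aut(\L_k)\cong\aut_+(\L_k)\rtimes C_2$ by~(\ref{sd}). For the refinement, assume in addition that $p_i\mid k_i^2-1$ for every $i$ (a vacuous condition when $s=1$). Taking all exponents equal to $1$ in~(\ref{aves}) gives $\alpha\rho_1\cdots\rho_s=\beta_1^{\sum_j k_j^2}$; for each $i$ one has $\sum_j k_j^2\equiv k_i^2\equiv1\ ({\rm mod}\ p_i)$ (since $p_i\mid k_j$ for $j\neq i$ and $p_i\mid k_i^2-1$), so $\sum_j k_j^2\equiv1\ ({\rm mod}\ k)$ and $\rho_1\cdots\rho_s=\lambda\in\Psi_k$; hence $\rho_s\in\langle\lambda\rangle E$. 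By Lemma~\ref{stdis}(i) this yields $S=\langle\lambda\rangle E$, so $\aut_+(\L_k)=\Psi_kS=\Psi_kE\cong\L_k\rtimes(\Z^{s-1}\times\Z_k^*)$, and then~(\ref{sd}) gives $\aut(\L_k)\cong(\L_k\rtimes(\Z^{s-1}\times\Z_k^*))\rtimes C_2$. The one genuinely delicate point is the construction of the homomorphisms $m_\ell$ — establishing that the subgroups $H_\ell$ are $S$-invariant and that $\psi\mapsto m_\ell(\psi)$ is additive — together with the verification of its values on products of the $\rho$'s, which is where the formulas~(\ref{stdis1})–(\ref{aves}) of Lemma~\ref{stdis} and the embedding of Lemma~\ref{embedding} do the real work; everything else is assembly.
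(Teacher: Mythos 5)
Your proof is correct and follows the same overall architecture as the paper's: the same cycle of implications, the same subgroups $\Psi_k$ and $A_k=\langle \eta_j,\rho_1,\dots,\rho_{s-1}\rangle$, and the same supporting lemmas (Lemmas~\ref{stap}, \ref{stanfg}, \ref{coprime}, \ref{stdis}, \ref{psik}, \ref{autst}, \ref{gene}). The one place where you genuinely diverge is the verification that $\rho_1,\dots,\rho_{s-1},\lambda$ generate a free abelian group modulo the torsion $\eta_j$'s, i.e.\ the paper's relation~(\ref{sfree1}) and the fact that $A_k\cap\Psi_k=\{id\}$. The paper does this by brute force: it expands $\alpha\rho_1^{a_1}\cdots\rho_s^{a_s}$ via~(\ref{aves}), compares the exponents of $\alpha$, and derives a divisibility contradiction; for $A_k\cap\Psi_k$ it additionally needs the identity $\rho_1^m\cdots\rho_s^m=\lambda^m$ with $m=(p_1-1)\cdots(p_s-1)$ from~(\ref{aves2}). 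You instead package the restriction maps $S\to\stab_{H_\ell}(\xi)\cong\Z\times\Z_{p_\ell}^*$ (already implicit in the proofs of Lemmas~\ref{coprime} and~\ref{stdis}) into integer-valued homomorphisms $m_\ell$ with $m_\ell(\rho_i)=\delta_{i\ell}$, $m_\ell(\eta_j)=0$, $m_\ell(\lambda)=1$, and read off both independence statements at once by evaluating $(m_1,\dots,m_s)$. This is cleaner and more conceptual: it avoids the exponent-comparison computation and the auxiliary identity~(\ref{aves2}) at this stage (you still need $\rho_1\cdots\rho_s=\lambda$ for the refinement, as does the paper), and as a bonus your observation that $\aut_+(\L_k)/\Psi_k\cong S/\langle\lambda\rangle$ is abelian gives normality of $\Psi_kA_k$ in $\aut_+(\L_k)$ explicitly, a point the phrase ``finite extension'' in statement~(iii) presupposes but the paper's proof leaves implicit. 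The computations of the $m_\ell$-values on the generators that you sketch (using $\alpha^k=1$ and $k\mid k_ik_\ell$ for $i\neq\ell$) are correct, so there is no gap.
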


%\begin{theorem}
%\label{sfree}
%Let $k \geq 2$. Then the following conditions are equivalent:
%\bi
%\item[(i)]
%$k$ is squarefree;
%\item[(ii)]
%${\rm Aut}(\L_k)$ is finitely generated;
%\item[(iii)]
%${\rm Aut}_+(\L_k) \cong \L_k \rtimes A_k$, where $A_k$ is a finitely generated abelian group;
%\item[(iv)]
%${\rm Aut}(\L_k) \cong (\L_k \rtimes A_k) \rtimes C_2$, where $A_k$ is a finitely generated abelian group.
%\ei
%Moreover, if $k$ is a product of $s$ distinct primes, then $A_k \cong \Z^{s'} \times \Z_k^*$, where 
%$$s' = \left\{
%\begin{array}{ll}
%0&\mbox{ if }s = 1\\
%s&\mbox{ if }s > 1
%\end{array}
%\right.$$
%\end{theorem}

\begin{proof}
(i) $\Rw$ (iii). 
By Lemma \ref{psik}, we have $\Psi_k \unlhd \aut_+(\L_k)$. 
Suppose first that $s = 1$ (so $k$ is prime). Define
$$A_k = \{ \eta_j \mid j \in \Z^*_k \}.$$
It is straightforward to check that
$$\ba{rcl}
\Z^*_k&\to&A_k\\
j&\mapsto&\eta_j
\ea$$
is a group isomorphism (for arbitrary $k$, in fact). Moreover, $\Psi_k \cap A_k = \{ id \}$. Let $\p \in \aut_+(\L_k)$. By Lemma \ref{stap}, $\alpha\p = \beta_r^j$ for some $r \in \Z$ and $j \in \Z^*_k$. Hence $\p\eta_{j\inv} \in \Psi_k$ and so $\p \in \Psi_kA_k$. Thus  
$\aut_+(\L_k) = \Psi_kA_k$ and so 
$${\rm Aut}_+(\L_k) \cong \Psi_k \rtimes A_k \cong \L_k \rtimes \Z^*_k$$ 
by Lemma \ref{psik}. Therefore ${\rm Aut}(\L_k) \cong (\L_k \rtimes (\Z^{s-1} \times \Z_k^*)) \rtimes C_2$ by (\ref{sd}).

Assume now that $s \geq 2$. %In view of Lemma \ref{stdis}, 
We define
$$A_k = \langle \eta_j, \rho_{\ell} \mid j \in \Z^*_k,\, \ell \in \{ 1,\ldots,s-1\} \rangle.$$
Since $\stab_{\L_k}(\xi)$ is abelian by (\ref{autlg8}), Lemma \ref{stdis} implies that
every element of $\stab_{\L_k}(\xi)$ can be written in the form $\rho_1^{a_1}\ldots \rho_s^{a_s}\eta_j$ for some $a_1,\ldots,a_s \in \Z$ and $j \in \Z^*_k$. We show that
\beq
\label{sfree1}
\rho_1^{a_1}\ldots \rho_s^{a_s}\eta_j = 1 \mbox{ implies $a_1 = \ldots = a_s = 0$ and $j = 1$.}
\eeq
Let 
$$I_+ = \{ i \in \{ 1,\ldots,r\} \mid a_i > 0\}, \quad I_- = \{ i \in \{ 1,\ldots,r\} \mid a_i < 0\}.$$
Suppose that $I_+ \cup I_- \neq \emptyset$. Replacing by the inverses if needed, we may assume that $I_+ \neq \emptyset$. Since $\stab_{\L_k}(\xi)$ is abelian, $ \rho_1^{a_1}\ldots \rho_s^{a_s}\eta_j = 1$ implies
$$\alpha\eta_j\prod_{i \in I_-} \rho_i^{-a_i} = \alpha\prod_{i \in I_+} \rho_i^{a_i}.$$
It follows from (\ref{aves}) that
$$\alpha^{j\prod_{i \in I_-} p_i^{-a_i}}\prod_{i\in I_-} \beta_{-a_i}^{jk_i^{-a_i}\prod_{\ell \in I_-\setminus \{ i\}} p_{\ell}^{-a_{\ell}}} = \alpha^{\prod_{i \in I_+} p_i^{a_i}}\prod_{i\in I_+} \beta_{a_i}^{k_i^{a_i}\prod_{\ell \in I_+\setminus \{ i\}} p_{\ell}^{a_{\ell}}}.$$
Comparing the powers of $\alpha$, we deduce that $k$ divides the difference
$$j\prod_{i \in I_-} p_i^{-a_i} - \prod_{i \in I_+} p_i^{a_i}.$$
In particular, by taking $\ell \in I_+$, it folllows that $p_{\ell}$ divides $j\prod_{i \in I_-} p_i^{-a_i}$, a contradiction. Thus $I_+ \cup I_- = \emptyset$, i.e. $a_1 = \ldots = a_s = 0$. Now $j = 1$ and (\ref{sfree1}) holds.

It follows from (\ref{sfree1}) that 
$$A_k \cong \langle \rho_1\rangle \times \ldots \times \langle \rho_{s-1} \rangle \times \{ \eta_j \mid j \in \Z^*_k \}.$$
We have already remarked that $ \{ \eta_j \mid j \in \Z^*_k \} \cong \Z^*_k$. Suppose that $\rho_{\ell}$ has finite order $m$. By (\ref{stdis1}), we get $\alpha^{p_{\ell}^m} = \alpha$, hence $k$ (and therefore $p_{\ell}$) divides $p_{\ell}^m -1$, a contradiction. Thus each $\rho_{\ell}$ has infinite order and so 
\beq
\label{aves4}
A_k \cong \Z^{s-1} \times \Z^*_k.
\eeq

Consider the subgroup $\Psi_kA_k$ of ${\rm Aut}_+(\L_k)$. In view of Lemma \ref{psik}, we have $\Psi_kA_k = A_k\Psi_k$. Suppose that $\psi \in A_k \cap \Psi_k$. Then $\psi = \p_{(\varepsilon_{n-r})_n,(0)_n,1} = \lambda^r$ for some $r \in \Z$.  Hence $\lambda^r = \rho_1^{a_1}\ldots \rho_{s-1}^{a_{s-1}}\eta_j$ for some $a_1,\ldots,a_{s-1} \in \Z$ and $j \in \Z^*_k$. Let $m = (p_1-1)\ldots (p_s-1)$. By (\ref{aves2}), we have
$$p_1^{mr}\ldots p_s^{mr} = \lambda^{mr} = \rho_1^{ma_1}\ldots \rho_{s-1}^{ma_{s-1}}\eta_j^m,$$
and so (\ref{sfree1}) yields $r = 0$. Thus $A_k \cap \Psi_k = \{ id \}$. Since $\Psi_k \unlhd \Psi_kA_k$, we get in view of Lemma \ref{psik} and (\ref{aves4})
\beq
\label{aves3}
\Psi_kA_k = (\Psi_k \rtimes A_k) \cong (\L_k \rtimes (\Z^{s-1} \times \Z^*_k)).
\eeq

Now we want to show that $\Psi_kA_k$ is a finite index subgroup of ${\rm Aut}_+(\L_k)$.
Let $\p = \p_{(i_n)_n,(j_n)_n,1} \in \aut_+(\L_k)$. Take $\psi = \p_{(\varepsilon_n)_n,(-j_n)_n,1} \in \Psi_k$. In view of (\ref{tail}) and (\ref{xifixed}), we have
$$\alpha\p\psi = \prod_{n \in \Z} \beta_n^{i_n}, \quad \xi\p\psi = \left(\left(\prod_{n \in \Z} \beta_n^{j_n}\right)\xi \right)\psi =\xi,$$
hence $\p\psi \in \stab_{\L_k}(\xi)$ and
\beq
\label{aves6}
{\rm Aut}_+(\L_k) = \Psi_k\stab_{\L_k}(\xi).
\eeq
Since $\lambda \in \Psi_k$, it follows from Lemma \ref{stdis}(ii) that $\Psi_kA_k \cap  \stab_{\L_k}(\xi)$ is a finite index subgroup of $\stab_{\L_k}(\xi)$. Therefore $\Psi_kA_k$ is a finite index subgroup of ${\rm Aut}_+(\L_k)$ and (iii) holds.

Suppose now that $p_i$ divides $(\frac{k}{p_i})^2 -1$ for $i = 1,\ldots,s$. Then it is straightforward to check that 
\beq
\label{aves5}
\lambda = \rho_1\ldots\rho_s.
\eeq
Indeed, in view of (\ref{aves}) we get
$$\alpha\rho_1 \ldots \rho_s = \alpha^{k}
\beta_{1}^{\sum_{i=1}^s k_i^2} = \beta_{1}^{\sum_{i=1}^s k_i^2},$$
hence it suffices to show that $k$ divides $\sum_{i=1}^s k_i^2 -1$, which is equivalent to $p_i$ dividing $(\frac{k}{p_i})^2 -1$ for $i = 1,\ldots,s$. 

Now it follows from (\ref{aves5}) that $\rho_s \in \Psi_kA_k$, yielding $\stab_{\L_k}(\xi) \leq \Psi_kA_k$. By (\ref{aves6}), we get ${\rm Aut}_+(\L_k) = \Psi_kA_k$ and so ${\rm Aut}_+(\L_k) \cong \L_k \rtimes (\Z^{s-1} \times \Z_k^*)$ by (\ref{aves3}). Therefore ${\rm Aut}(\L_k) \cong (\L_k \rtimes (\Z^{s-1} \times \Z_k^*)) \rtimes C_2$ by (\ref{sd}).

(iii) $\Rw$ (ii). The groups $\L_k$ and $\Z^{s-1} \times \Z_k^*$ are finitely generated, so must be their semidirect product and all their finite extensions. Thus ${\rm Aut}_+(\L_k)$ is finitely generated and so is ${\rm Aut}(\L_k)$ by (\ref{sd}).

(ii) $\Rw$ (i). Suppose now that $k$ is not squarefree. By Lemmas \ref{stanfg} and \ref{coprime}, $\stab_{\L_{k}}(\xi)$ is not finitely generated. By Lemma \ref{autst}, ${\rm Aut}_+(\L_k)$ is not finitely generated either. Since ${\rm Aut}_+(\L_k)$ is a finite index subgroup of ${\rm Aut}(\L_k)$, and every finite index subgroup of a finitely generated subgroup is finitely generated, it follows that ${\rm Aut}_+(\L_k)$ is not finitely generated.
\end{proof}

\section{Uniform continuity}

\begin{theorem}
\label{luc}
Every $\p \in {\rm Aut}(\L_k)$ is uniformly continuous
with respect to the depth metric.
\end{theorem}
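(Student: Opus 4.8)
The plan is to verify the stabilizer criterion of Proposition~\ref{ucsta} in a uniform way: for every $\p \in \aut(\L_k)$ and every $m \in \N$ I will show that $(\stab_{m+1}(\L_k))\p \subseteq \stab_m(\L_k)$, with the same index $n=m+1$ working for all automorphisms at once. The natural framework is the affine action on power series already in play in Section~\ref{sec:lamplighter}. Identifying the boundary $A_k^\omega$ with $\Z_k[[t]]$ as before, an element $w\in\L_k$ in its normal form $w=(\prod_m\beta_m^{c_m})\xi^s$ acts by $X\mapsto a_wX+b_w$ with $a_w=(1-t)^{-s}$ and $b_w=I_w(t)(1-t)^{-s}$, where $I_w(t)=\sum_m c_m(1-t)^m$; this is immediate from (\ref{powse}). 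Since the $n$-th level of $T_k$ corresponds to the first $n$ coefficients, i.e.\ to $\Z_k[[t]]/(t^n)$, testing on $X=0$ and $X=1$ shows that $w\in\stab_n(\L_k)$ if and only if $a_w\equiv1$ and $b_w\equiv0\pmod{t^n}$, equivalently $v_t\big((1-t)^{-s}-1\big)\ge n$ and $v_t(I_w(t))\ge n$, where $v_t$ denotes the $t$-adic valuation on $\Z_k[[t]]$ (a handy fact here is that any element of the form $1+(\text{valuation}\ge N)$ is a unit whose inverse has the same form).

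Next I would compute the affine data of $w\p$ for $\p=\p_{(i_n)_n,(j_n)_n,r}\in\aut(\L_k)$, recalling that $r=\pm1$ by Lemma~\ref{buildaut}. Writing $I(t)=\sum_n i_n(1-t)^n$ and $J(t)=\sum_n j_n(1-t)^n$, equation (\ref{autlg3}) gives that $\beta_m\p$ acts as the translation $X\mapsto X+(1-t)^{rm}I(t)$, so $\prod_m(\beta_m\p)^{c_m}$ is the translation by $I(t)K_w(t)$ with $K_w(t)=\sum_m c_m(1-t)^{rm}$; and $\xi\p$ acts by $X\mapsto(1-t)^{-r}X+J(t)(1-t)^{-r}$, whence $(\xi\p)^s$ acts by $X\mapsto(1-t)^{-rs}X+b^{(s)}$ with $b^{(s)}=J(t)\,\dfrac{(1-t)^{-rs}-1}{1-(1-t)^r}$. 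Composing, $a_{w\p}=(1-t)^{-rs}$ and $b_{w\p}=(1-t)^{-rs}I(t)K_w(t)+b^{(s)}$.

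The heart of the argument is a valuation estimate. One immediately has $v_t\big(1-(1-t)^r\big)=1$ for $r=\pm1$ (leading coefficient a unit), so $v_t(b^{(s)})\ge v_t\big((1-t)^{-rs}-1\big)-1$. The step I expect to be the main obstacle is the claim that $v_t(K_w(t))=v_t(I_w(t))$: for $r=1$ the two are literally equal, while for $r=-1$ one has $K_w=\sigma(I_w)$, where $\sigma$ is the continuous $\Z_k$-algebra automorphism of $\Z_k[[t]]$ determined by $\sigma(t)=-t(1-t)^{-1}$, equivalently $\sigma(1-t)=(1-t)^{-1}$; it is an involution, and since $v_t(\sigma(t))=1$ with unit leading coefficient, $\sigma$ preserves $v_t$. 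Granting this, suppose $w\in\stab_{m+1}(\L_k)$, so $v_t\big((1-t)^{-s}-1\big)\ge m+1$ and $v_t(I_w)\ge m+1$; inverting a power series does not lower the valuation of its difference from $1$, hence $v_t\big((1-t)^{-rs}-1\big)\ge m+1$ as well (as $(1-t)^{-rs}$ is $(1-t)^{-s}$ or $(1-t)^{s}$). Therefore $v_t(a_{w\p}-1)\ge m+1\ge m$; $v_t\big((1-t)^{-rs}I(t)K_w(t)\big)\ge v_t(I)+v_t(K_w)\ge m+1$; and $v_t(b^{(s)})\ge(m+1)-1=m$. Hence $v_t(b_{w\p})\ge m$, so $w\p\in\stab_m(\L_k)$.

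This proves $(\stab_{m+1}(\L_k))\p\subseteq\stab_m(\L_k)$ for all $m$, so by Proposition~\ref{ucsta} every $\p\in\aut(\L_k)$ is uniformly continuous for the depth metric; since the criterion is applied to $\p$ directly, no separate argument for $\p\inv$ is needed. All of this is routine bookkeeping with $t$-adic valuations except for the identity $v_t(K_w)=v_t(I_w)$ in the case $r=-1$, which is exactly where the substitution $\sigma$ does the work.
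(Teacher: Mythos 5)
Your proof is correct, and it takes a genuinely different route from the paper's. The paper first reduces to a generating set via Lemma \ref{gene} (elements of $\stab_{\L_k}(\xi)$, plus $\iota$ and $\zeta$) and then verifies the criterion of Proposition \ref{ucsta} separately for each type of generator; the hardest case there is $\zeta$, where showing that $\sum_u r_u(1-t)^{i_u}\in(t^{p+1})$ forces $\sum_u r_u(1-t)^{-i_u}\in(t^{p+1})$ is done by an explicit binomial-coefficient induction (after first conjugating by a power of $\xi$ to make all exponents positive). You instead treat an arbitrary automorphism $\p_{(i_n)_n,(j_n)_n,\pm1}$ in one stroke, computing the affine data $a_{w\p},b_{w\p}$ of the image of a general element and bounding $t$-adic valuations; the paper's binomial computation is replaced by the observation that the substitution $\sigma(1-t)=(1-t)^{-1}$ is a valuation-preserving continuous endomorphism of $\Z_k[[t]]$ (its key point being that $\sigma(t)=-t(1-t)^{-1}$ has valuation $1$ with unit leading coefficient, which is what makes the argument work over $\Z_k$ despite zero divisors). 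What your approach buys is uniformity --- the single modulus $n=m+1$ works for every automorphism simultaneously, with no appeal to the generation result of Lemma \ref{gene} and no case analysis --- at the cost of carrying the general normal form through the computation. One cosmetic remark: your convention $\stab_n\leftrightarrow(t^n)$ differs by one from the paper's $\stab_p\leftrightarrow W_{p+1}$ (words of length $n$ correspond to coefficients $x_0,\dots,x_{n-1}$, so yours is the consistent one), but this shift is immaterial to uniform continuity.
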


\begin{proof}
By Lemma \ref{gene} it is sufficient to show that every
$\theta \in \stab_{\L_k}(\xi)$ and that the maps $\iota$ and $\zeta$ are uniformly continuous 
with respect to the depth metric.

Assume first that $\theta \in \stab_{\L_k}(\xi)$. By
Proposition \ref{ucsta}, it will suffice to show that
\beq
\label{luc2}
(\stab_{p}(\L_k))\theta \subseteq \stab_{p}(\L_k)
\eeq
holds for every $p \geq 0$. 

Write $\alpha\theta = \beta_{i_1}^{r_1}\ldots \beta_{i_m}^{r_m}$. Then
$\beta_{\ell}\theta = \beta_{\ell + i_1}^{r_1}\ldots \beta_{\ell + i_m}^{r_m}$ for every $\ell \in \Z$.
Assume that $\gamma\xi^n \in \stab_{p}(\L_k)$ for some $\gamma \in \fin(\L_k)$ and $n \in \Z$. 
%Since $\stab_{p}(\L_k) \leq \L_k$ and $\theta \in \aut(\L_k)$, we have 
%$$\gamma\xi^n \in \stab_{p}(\L_k) \iff \xi^{-n}\gamma\inv \in \stab_{p}(\L_k),\quad (\gamma\xi^n)\theta \in \stab_{p}(\L_k) \iff (\xi^{-n}\gamma\inv)\theta \in \stab_{p}(\L_k).$$
%Therefore we may assume that $n \leq 0$, replacing $\gamma\xi^n$ by $\xi^{-n}\gamma\inv$ if needed.
Write $\gamma = \beta_{i'_1}^{r'_1}\ldots \beta_{i'_s}^{r'_s}$. 
%Fix $q \in \Z$ such that $q + i'_v \geq \max\{ -i_u,0\}$ for all $u \in \{ 1,\ldots,m\}$ and $v \in \{ 1,\ldots,s\}$. Since $\stab_{p}(\L_k) \unlhd \L_k$ and $\theta \in \aut(\L_k)$, we have 
%$$\gamma\xi^n \in \stab_{p}(\L_k) \iff (\xi^q\gamma\xi^{-q})\xi^n \in \stab_{p}(\L_k),\quad (\gamma\xi^n)\theta \in \stab_{p}(\L_k) \iff ((\xi^q\gamma\xi^{-q})\xi^n)\theta \in \stab_{p}(\L_k).$$
%Therefore we may assume that 
%\beq
%\label{eut1}
%i'_v \geq \max\{ -i_u,0\}
%\eeq
%for all $u$ and $v$, replacing $\gamma\xi^n$ by $(\xi^q\gamma\xi^{-q})\xi^n$ if needed.
Let $q = p+1-\min\{ i'_1,\ldots,i'_s\}$. Since $\stab_{p}(\L_k) \unlhd \L_k$ and $\theta \in \aut(\L_k)$, we have 
\beq
\label{zur2}
\gamma\xi^n \in \stab_{p}(\L_k) \iff (\xi^q\gamma\xi^{-q})\xi^n \in \stab_{p}(\L_k),
\eeq
\beq
\label{zur3}
(\gamma\xi^n)\theta \in \stab_{p}(\L_k) \iff ((\xi^q\gamma\xi^{-q})\xi^n)\theta \in \stab_{p}(\L_k).
\eeq
Therefore we may assume that $i'_v \geq p+1$ for every $v$, replacing $\gamma\xi^n$ by $(\xi^q\gamma\xi^{-q})\xi^n$ if needed.

Write 
$$f(t) = \sum_{v=1}^s r'_v(1-t)^{i'_v}, \quad g(t) = \sum_{u=1}^m r_u(1-t)^{i_u}, \quad
h(t) = (1-t)^{-n}.$$
Considering the formal series $X = \sum_{j=0}^{\infty} x_jt^j$, we have
\beq
\label{eut2}
X\gamma\xi^n = (X + f(t))h(t), \quad X((\gamma\xi^n)\theta) = (X + f(t)g(t))h(t).
\eeq
Indeed, the first equality follows from the formal series interpretation of the $\beta_i$ and $\xi^n$. Since $\theta \in \stab_{\L_k}(\xi)$, it suffices to show that $X(\gamma\theta) = X + f(t)g(t)$. This follows easily from the equality
$$\gamma\theta = (\beta_{i'_1}^{r'_1}\ldots \beta_{i'_s}^{r'_s})\theta = \prod_{u=1}^m \prod_{v = 1}^s \beta_{i'_v+i_u}^{r'_vr_u}.$$

Now let $W_{p+1}$ denote the principal ideal of $\Z_k[[t]]$ generated by $t^{p+1}$. Since $\gamma\xi^n \in \stab_{p}(\L_k)$, then $X\gamma\xi^n=x_0+x_1t^1+\ldots+x_pt^p + \ldots$ and so 
the first equation in (\ref{eut2}) implies that $(X + f(t))h(t) -X = X\gamma\xi^n-X \in W_{p+1}$ for every $X \in \Z_k[[t]]$. Considering the particular case $X = 0$, we get $f(t)h(t) \in W_{p+1}$. 

Using the second equation in (\ref{eut2}), we want to show that
$X(\gamma\xi^n)\theta - X=(X + f(t)g(t))h(t) - X \in W_{p+1}$. Since
$$
(X + f(t)g(t))h(t) - X = (X + f(t))h(t)-X + f(t)(g(t) -1)h(t),$$ 
the claim follows from $(X + f(t))h(t) -X \in W_{p+1}$ and $f(t)h(t) \in W_{p+1}$. 
Since $X(\gamma\xi^n)\theta - X \in W_{p+1}$, then $(\gamma\xi^n)\theta \in \stab_{p}(\L_k)$ and so (\ref{luc2}) holds. Therefore $\theta$ is uniformly continuous for every $\theta \in \stab_{\L_k}(\xi)$.

We show next that $\iota$ is uniformly continuous.
By
Proposition \ref{ucsta}, it suffices to show that
\beq
\label{luc3}
(\stab_{p+1}(\L_k))\iota \subseteq \stab_{p}(\L_k)
\eeq
holds for every $p \geq 0$. 

Assume that $\gamma\xi^n \in \stab_{p+1}(\L_k)$ for some $\gamma \in \fin(\L_k)$ and $n \in \Z$. 
Since $\stab_{p}(\L_k) \leq \L_k$ and $\iota \in \aut(\L_k)$, we have 
$$\gamma\xi^n \in \stab_{p}(\L_k) \iff \xi^{-n}\gamma\inv \in \stab_{p}(\L_k),\quad (\gamma\xi^n)\iota \in \stab_{p}(\L_k) \iff (\xi^{-n}\gamma\inv)\iota \in \stab_{p}(\L_k).$$
Therefore we may assume that $n \geq 0$, replacing $\gamma\xi^n$ by $\xi^{-n}\gamma\inv$ if needed.

Write $\gamma = \beta_{i_1}^{r_1}\ldots \beta_{i_s}^{r_s}$. Since $\iota \in \Psi_k$, then
 (\ref{tail}) implies that $\beta_{\ell}\iota = \beta_{\ell}$ for every $\ell \in \Z$. Thus $\gamma\iota = \gamma$. On the other hand, a straightforward induction shows that 
\beq
\label{eut4}
\xi^n\iota = \beta_0\ldots \beta_{n-1}\xi^n
\eeq
for every $n \geq 0$. Write 
$$f(t) = \sum_{u=1}^m r_u(1-t)^{i_u}, \quad
h(t) = (1-t)^{-n}.$$
Considering the formal series $X = \sum_{j=0}^{\infty} x_jt^j$, we have
\beq
\label{eut3}
X\gamma\xi^n = (X + f(t))h(t), \quad X((\gamma\xi^n)\iota) = (X + f(t) + \sum_{i=0}^{n-1} (1-t)^i)h(t).
\eeq
Indeed, the first equality follows from the formal series interpretation of the $\beta_i$ and $\xi^n$, and the second equality follows from $\gamma\iota = \gamma$ and (\ref{eut4}).

Since $\gamma\xi^n \in \stab_{p+1}(\L_k)$, we have $(X + f(t))h(t) -X \in W_{p+2}$ for every $X \in \Z_k[[t]]$ by the first equality in (\ref{eut3}). Hence $X(h(t)-1) +f(t)h(t) \in W_{p+2}$ for every $X \in \Z_k[[t]]$.
Considering the particular case $X = 0$, we get $f(t)h(t) \in W_{p+2}$ and consequently $X(h(t)-1) \in W_{p+2}$ for every $X \in \Z_k[[t]]$. Since $X$ is arbitrary, it follows that $h(t)-1 \in W_{p+2}$. But then
$$\sum_{i=0}^{n-1} (1-t)^ih(t) = \frac{1-(1-t)^n}{1-(1-t)}h(t) = \frac{h(t)-1}{t} \in W_{p+1},$$
yielding 
$$X((\gamma\xi^n)\theta) -X = (X + f(t) + \sum_{i=0}^{n-1} (1-t)^i)h(t) - X = ((X + f(t))h(t) - X) + \sum_{i=0}^{n-1} (1-t)^ih(t) \in W_{p+1}.$$
Hence $(\gamma\xi^n)\theta \in \stab_{p}(\L_k)$ and so (\ref{luc3}) holds. Therefore $\iota$ is uniformly continuous.

Finally, we show that $\zeta$ is uniformly continuous. By
Proposition \ref{ucsta}, it will suffice to show that
\beq
\label{zur1}
(\stab_{p}(\L_k))\theta \subseteq \stab_{p}(\L_k)
\eeq
holds for every $p \geq 0$. 

Recall that $\alpha\zeta = 
\alpha$ and $\xi \zeta = \xi^{-1}$ and so $\beta_{\ell} \zeta=\beta_{-\ell}$ for every $\ell \in \mathbb{Z}$. Assume now that $\gamma\xi^n \in \stab_{p}(\L_k)$ for some $\gamma = \beta_{i_1}^{r_1}\ldots \beta_{i_m}^{r_m}\in \fin(\L_k)$ and $n \in \Z$. Let $q = \max\{ |i_1|,\ldots,|i_m|\} + 1$. Then (\ref{zur2}) and (\ref{zur3}) (with $\theta$ replaced by $\zeta$) hold, 
so we can assume that $i_1,\ldots,i_m > 0$ (replacing $\gamma\xi^n$ by $(\xi^q\gamma\xi^{-q})\xi^n$ if needed).

Write 
$$f(t) = \sum_{u=1}^m r_u(1-t)^{i_u}, \quad g(t) = \sum_{u=1}^m r_u(1-t)^{-i_u}, \quad
h(t) = (1-t)^{-n}.$$

Recall that $h^{-1}(t)=(1-t)^n$. Considering the formal series $X = \sum_{j=0}^{\infty} x_jt^j$, we have

\beq
\label{zur4}
X\gamma\xi^n = (X + f(t))h(t), \quad X((\gamma\xi^n)\zeta) = (X + g(t))h\inv (t).
\eeq

Indeed, the first equality follows from the formal series interpretation of the $\beta_i$ and $\xi^n$,
and the second equality follows from $\gamma \zeta = \beta_{-i_1}^{r_1}\ldots \beta_{-i_m}^{r_m}$
and $\xi \zeta = \xi^{-1}$ and the same argument used to get the first equality.

Since $\gamma\xi^n \in \stab_{p}(\L_k)$, we have $(X + f(t))h(t) -X \in W_{p+1}$ for every $X \in \Z_k[[t]]$ by (\ref{zur4}). 
Considering the particular case $X = 0$, we get $f(t)h(t) \in W_{p+1}$ and consequently $f(t) \in W_{p+1}$ (since $h(t) = 1 + th'(t)$ for some $h'(t) \in \Z_k[[t]]$). We shall prove that $g(t) \in W_{p+1}$.

The coefficient of $t^{\ell}$ in $f(t)$ is $(-1)^{\ell} \sum_{u=1}^m r_u\binom{i_u}{\ell}$. Since $f(t) \in W_{p+1}$, we get
\beq
\label{zur5}
\sum_{u=1}^m r_u\binom{i_u}{\ell} = 0 \; \mbox{ for }\ell = 0,\ldots,p.
\eeq
For $\ell = 0$, this is equivalent to $\sum_{u=1}^m r_u = 0$. 
Since $i_u \ge 1$, for every $s \geq -1$ we have $\binom{i_u+s}{0}=1$ and so
\beq
\label{zur6}
\sum_{u=1}^m r_u\binom{i_u+s}{0} = 0.
\eeq
We show next that
\beq
\label{zur7}
\sum_{u=1}^m r_u\binom{i_u+s}{\ell} = 0
\eeq
for all $\ell \in \{ 1,\ldots,p\}$ and $s \geq 0$. We use induction on $s$.

The case $s = 0$ follows from (\ref{zur5}), hence we assume that $s > 0$ and (\ref{zur7}) holds for $s-1$ and every $\ell \in \{ 1,\ldots,p\}$. Indeed, we may use the induction hypothesis and (\ref{zur6}) (if $\ell = 1$) to deduce
$$\sum_{u=1}^m r_u\binom{i_u+s}{\ell} = \sum_{u=1}^m r_u\binom{i_u+s-1}{\ell} + \sum_{u=1}^m r_u\binom{i_u+s-1}{\ell-1} = 0 + 0 = 0.$$
Thus (\ref{zur7}) holds for every $s$. 

We compute now the coefficient $c_{\ell}$ of $t^{\ell}$ in $(1-t)^{-i_u} = (1+t+t^2+\ldots)^{i_u}$. Then $c_{\ell}$ is the number of decompositions of the form $\ell = a_1 + \ldots + a_{i_u}$ with $a_1,\ldots,a_{i_u} \in \N$, and therefore also the number of decompositions of the form $\ell + i_u = b_1 + \ldots + b_{i_u}$ with $b_1,\ldots,b_{i_u} \geq 1$. This number is well known to be $\binom{i_u +\ell -1}{i_u-1} =  \binom{i_u+\ell -1}{\ell}$ (we must choose $i_u+\ell -1$ intermediate positions in a sequence of $\ell +i_u$ 1's to bound the summands).
It follows that the coefficient of $t^{\ell}$ in $g(t)$ is $\sum_{u=1}^m r_u\binom{i_u + \ell -1}{\ell}$. By (\ref{zur6}) and (\ref{zur7}), we get $\sum_{u=1}^m r_u\binom{i_u + \ell -1}{\ell} = 0$ for $\ell = 0,\ldots,p$. Thus $g(t) \in W_{p+1}$ as claimed.

Now 
$$\begin{array}{lll}
(X + g(t))h\inv (t) - X&=&g(t)h\inv (t) -(X-Xh\inv (t)) = g(t)h\inv (t) -(Xh(t) -X)h\inv (t)\\
&=&g(t)h\inv (t) -((X+f(t))h(t) -X)h\inv (t) + f(t) \in W_{p+1}
\end{array}$$
since $W_{p+1}$ is an ideal of $\Z_k[[t]]$. By (\ref{zur4}), we get $(\gamma\xi^n)\zeta \in \stab_{p}(\L_k)$ and so (\ref{zur1}) holds. Therefore $\zeta$ is uniformly continuous.

Since $\iota^k = \iota$, $\zeta^2 = \zeta$ and the composition of uniformly continuous mappings is uniformly continuous, it follows from Lemma \ref{gene}(ii) that every $\p \in \aut(\L_k)$ is uniformly continuous.
\end{proof}

Next we note that the fixed point subgroup of an automorphism of $\L_2$
needs not be finitely generated, even if the automorphism is inner.

\begin{example}
\label{nff}
There exists $\theta \in {\rm Inn}(\L_2)$ such that ${\rm Fix}(\theta)$
is not finitely generated.
\end{example}

\noindent Indeed, let $\theta \in \inn(\L_2)$ be defined by
$$\psi\theta = \alpha\psi\alpha\inv \quad (\psi \in \L_2).$$
Since $\fin(\L_2)$ is abelian, we have
$$\gamma\theta = \alpha\gamma\alpha = \alpha^2\gamma = \gamma$$
for every $\gamma \in \fin(\L_2)$. On the other hand, given $n \in \Z$,
we have
$$\xi^n\theta = \alpha\xi^n\alpha = \beta_0\beta_n\xi^n,$$
hence $\xi^n \in \fix(\theta)$ if and only if $n = 0$.  It follows that $\fix(\theta) = \fin(\L_2)$, an infinite abelian torsion
group, therefore non finitely generated.

\begin{questions}
We conclude this paper with some open questions and suggestions for further research.
Let $G \leq \aut(T_A)$ be an automaton group and $\overline{G}$ its closure
with respect to the depth metric. 

\begin{enumerate}
\item
Does every automorphism of of an automaton group $G \leq \aut(T_A)$
extend to an automorphism of $\aut(T_A)$? 
\item Is every fixed point of $\overline{\varphi}$ a limit point of fixed points of $\varphi$?
\end{enumerate}
\end{questions}

%\section{Questions \label{sec:questions}}

\section*{Acknowledgements}

The authors would like to thank Manuel Delgado for providing support in some calculations with the
\texttt{GAP} mathematical software and an anonymous referee for helpful
comments and references which enriched the exposition.

The first author is a member of the Gruppo Nazionale per le Strutture Algebriche, Geometriche e le loro Applicazioni (GNSAGA) of the Istituto Nazionale di Alta Matematica (INdAM) and gratefully acknowledges the support of the 
Funda\c{c}\~ao de Amparo \`a Pesquisa do Estado de S\~ao Paulo 
(FAPESP Jovens Pesquisadores em Centros Emergentes grant 2016/12196-5),
of the Conselho Nacional de Desenvolvimento Cient\'ifico e Tecnol\'ogico (CNPq 
Bolsa de Produtividade em Pesquisa PQ-2 grant 306614/2016-2) and of
the Funda\c{c}\~ao para a Ci\^encia e a Tecnologia  (CEMAT-Ci\^encias FCT project UID/Multi/04621/2013).

The second author was partially supported by CMUP
(UID/MAT/00144/2019), which is funded by FCT (Portugal) with national
(MCTES) and European structural funds through the programs FEDER, under
the partnership agreement PT2020. 

\bibliographystyle{plain}
\bibliography{bibExtAutSelfSimilar}

\bigskip

\sc{Francesco Matucci, 
Dipartimento di Matematica e Applicazioni, Universit\`a degli Studi di Milano-Bicocca, Office 3043,  Via Cozzi 55, 20125 Milano, Italy
}

{\em E-mail address:} \texttt{francesco.matucci@unimib.it}

%{\sc Francesco Matucci, Instituto de Matem\'atica, Estat\'istica e Computa\c{c}\~ao Cientfica,
%Universidade Estadual de Campinas,
%Rua S\`ergio Buarque de Holanda, 651 -- Cidade Universit\`aria "Zeferino Vaz"
%13083-859, Bar\~ao Geraldo, Campinas, S\~ao Paulo, Brazil } 
%
%{\em E-mail address:} \texttt{francesco@ime.unicamp.br}

\bigskip

{\sc Pedro V. Silva, Centro de
Matem\'{a}tica, Faculdade de Ci\^{e}ncias, Universidade do
Porto, R. Campo Alegre 687, 4169-007 Porto, Portugal}

{\em E-mail address}: \texttt{pvsilva@fc.up.pt}

\end{document}